\documentclass[reqno]{amsart}
\usepackage[foot]{amsaddr}

\usepackage[top=30mm,bottom=30mm,inner=30mm,outer=35mm]{geometry}

% The arXiv instructions recommend the next line
\pdfoutput=1

\usepackage{booktabs}
\usepackage{rotating}
\usepackage{amssymb}
\usepackage[british]{babel}
\usepackage{color}
\usepackage{enumitem}
\usepackage[T1]{fontenc}
\usepackage[utf8]{inputenc}
\usepackage[l2tabu,orthodox]{nag}
\usepackage{tensor}
\usepackage[vcentermath]{youngtab}
\usepackage[boxsize=1em,centertableaux]{ytableau}
\usepackage{tipa}
\usepackage{chemfig}

\usepackage{nicefrac}

\usepackage{todonotes}

\usepackage[
		pdftex,
		bookmarks,
		bookmarksopen,
		bookmarksdepth=3,
		bookmarksopenlevel=1,
		bookmarksnumbered,
		breaklinks,
		colorlinks,
		linkcolor=linkcolor,
		citecolor=citecolor,
		urlcolor=urlcolor,
	]{hyperref}

\pdfstringdefDisableCommands{\def\eqref#1{(\ref{#1})}}

\hypersetup{
	pdftitle    = {Algebraic Conditions for Conformal Superintegrability in 
	Arbitrary Dimension},
	pdfauthor   = {Jonathan Kress, Konrad Schöbel, Andreas Vollmer},
	pdfsubject  = {article},
	pdfcreator  = {\LaTeX{} with `amsart' class},
	pdfproducer = {pdf\LaTeX},
	pdfkeywords = {superintegrable systems}
}

\definecolor{linkcolor}{rgb}{0.5,0.0,0.0}
\definecolor{citecolor}{rgb}{0.0,0.5,0.0}
\definecolor{urlcolor} {rgb}{0.0,0.0,0.5}

\title[Conformal Superintegrability in Arbitrary Dimension]
	{Algebraic Conditions for\\ Conformal Superintegrability\\ in Arbitrary 
	Dimension}
\subjclass[2010]{
	Primary
	14H70;  %  Algebraic Geometry - Relationships with integrable systems
	Secondary
	53C18,  %  Conformal structures on manifolds
	70H06,  %  Mechanics of particles and systems - Completely integrable systems and methods of integration
	70H33,  %  Mechanics of particles and systems - Symmetries and conservation laws, ...
	35N10.  %  Overdetermined systems of PDEs with variable coefficients
}

\author{Jonathan Kress$^\sharp$}
\author{Konrad Schöbel$^*$}
\author{Andreas Vollmer$^{\sharp\flat\mathsection}$}

\email{j.kress@unsw.edu.au}
\email{konrad.schoebel@htwk-leipzig.de}
\email{andreas.vollmer@polito.it} %{a.vollmer@unsw.edu.au}
\email{andreas.d.vollmer@gmail.com}

\address[$\sharp$]{%
	School of Mathematics and Statistics \\
	University of New South Wales \\
	Sydney 2052 \\
	Australia
}

\address[$^*$]{%
	Faculty for Digital Transformation \\
	HTWK Leipzig University of Applied Sciences \\
	04251 Leipzig \\
	Germany
}

\address[$^\flat$]{%
	Institute of Geometry and Topology \\
	University of Stuttgart \\
	70049 Stuttgart \\
	Germany
}

\address[$^\mathsection$]{%
	Dipartimento di Scienze Matematiche
	"Giuseppe Luigi Lagrange" \\
	Politecnico di Torino \\
	Corso Duca degli Abruzzi, 24 \\
	10129 Torino \\
	Italy
}

\setcounter{tocdepth}{2}

\numberwithin{equation}{section}

\usepackage{amsthm,thmtools}
\newtheorem{theorem}{Theorem}[section]
\newtheorem{proposition}[theorem]{Proposition}
\newtheorem{lemma}[theorem]{Lemma}
\newtheorem{corollary}[theorem]{Corollary}

\theoremstyle{definition}
\newtheorem{definition}[theorem]{Definition}
\newtheorem{assume}[theorem]{Assumption}
\newtheorem{remark}[theorem]{Remark}
\newtheorem{example}[theorem]{Example}

% Real Numbers
\newcommand{\R}{\mathbb{R}}

% Conformal Scaling
\newcommand{\cs}{\ensuremath{\sigma}}
% Differentiation
\newcommand{\D}[2]{\ensuremath{\frac{\partial #1}{\partial #2}}}
% Schouten
\newcommand{\schouten}{\ensuremath{\mathsf{P}}}
% Brackets
\newcommand{\lb}{\left(}
\newcommand{\rb}{\right)}
\newcommand{\lcb}{\left\{}
\newcommand{\rcb}{\right\}}
\newcommand{\lsb}{\left[}
\newcommand{\rsb}{\right]}

\setlist[enumerate,1]{label=(\roman*)}

\Yinterspace4pt
\Yautoscale1

%============================================================================%

\begin{document}

\begin{abstract}

	We show that the definition of a second order superintegrable system on a
	\mbox{(pseudo-)} Riemannian manifold gives rise to a conformally 
	invariant notion of superintegrability. Conformal equivalence is the 
	natural extension of the well-known St\"ackel transform, which in turn 
	originates from the classical Maupertuis-Jacobi principle.
	We extend our recently developed algebraic geometric approach for the 
	classification of second order superintegrable systems in arbitrarily 
	high dimension to conformally superintegrable systems,
	which are presented via conformal scale choices of second order
	superintegrable systems defined within a conformal geometry. 

	For superintegrable systems on constant curvature spaces, we find that the
	conformal scales of St\"ackel equivalent systems arise from eigenfunctions
	of the Laplacian and that their equivalence is characterised by a 
	conformal density of weight two.

	Our approach yields an algebraic equation that governs the classification 
	under conformal equivalence for a prolific class of second order 
	conformally superintegrable systems. This class contains all 
	non-degenerate examples known to date, and is given by a simple algebraic 
	constraint of degree two on a general harmonic cubic form.
	In this way the yet unsolved classification problem is put into the reach 
	of algebraic geometry and geometric invariant theory.
	In particular, no obstruction exists in dimension three, and thus the 
	known classification of conformally superintegrable systems is reobtained 
	in the guise of an unrestricted univariate sextic.
	In higher dimensions, the obstruction is new and has never been revealed 
	by traditional approaches.
	
\end{abstract}

\maketitle
\setcounter{tocdepth}{1}
\tableofcontents

%============================================================================%
\section{Introduction}
Transformation groups play an important role in the natural sciences: The 
Poincar\'e group, for instance, and its subgroups, are pivotal in special 
relativity, for Maxwell's field equations, in particle physics, and many 
other fields.
Felix Klein's Erlangen program has put the concept of transformations at the 
core of geometry, later generalised by Cartan \cite{Klein1900,Sharpe2000}. 
In particular, relaxing the invariance group of the geometry considered, some 
properties are not preserved any more.
Inspired by this idea, the current paper reconsiders second order 
superintegrable Hamiltonian systems. These have been extensively studied as 
structures in (pseudo-)Riemannian geometry, but not yet as structures in 
conformal geometry.

Superintegrable systems, traditionally, are Hamiltonian systems on a 
(pseudo-)Riemannian geometry that admit a maximal amount of (hidden) 
symmetry. 
They are often seminal models in science.
Historically, the theory of superintegrability arose from classical (and 
quantum) mechanics: While it is impossible, even for relatively simple 
models, to solve Hamiltonian's or Schr\"odinger's equation in exact, closed 
terms, for superintegrable systems the solution can be found by 
\emph{quadrature}, i.e.\ using algebraic operations and the integration of 
known 
functions.
Prominent examples of second order superintegrable systems are the 
Kepler-Coulomb and the Harmonic oscillator models. They have 
fundamental significance for the understanding of celestial mechanics, atomic 
orbitals, material science and many other disciplines.

%----------------------------------------------------------------------------%
\subsection{What Geometry underpins 
superintegrability?}\label{sec:geometry.sis}

Traditionally, second order superintegrable systems are defined on a 
(pseudo-)Riemannian manifold $(M,g)$. The suitable symmetry group for these 
systems is the semi-direct product $\mathfrak 
S=\mathrm{Diff}(M)\rtimes\mathrm{Aff}(\R)$ of diffeomorphisms (coordinate 
transformations on $M$) and the affine group 
$\mathrm{Aff}(\R)=\R^*\ltimes\R$, where
$\R^*=\R\setminus\{0\}$, see Section~\ref{sec:preliminaries}.
However, these are not the only possible transformations of superintegrable 
systems. Indeed, conformal geometry manifests itself in the theory of 
superintegrable systems through \emph{St\"ackel equivalence} or 
\emph{coupling constant metamorphosis}; these will be discussed in detail in 
Section~\ref{sec:staeckel}. Historically, they can be traced back to the 
1700s in the form of Maupertuis' principle 
\cite{Tsiganov,Lagrange_1788,Jacobi_1866,Maupertuis_1750}.
St\"ackel transformations are linked to very special conformal 
transformations, namely those that originate from superintegrable potentials.

Arbitrary conformal transformations, however, do not preserve 
superintegrability. Instead they lead to \emph{conformally superintegrable 
systems}. Although these systems are well studied, their underlying conformal 
geometry is understood only superficially to date.
\emph{One purpose of the present paper is to remedy this, and to derive a 
suitable concept of conformal equivalence on conformally superintegrable 
systems from St\"ackel equivalence.}

Given the significance of second order (conformally) superintegrable systems, 
it is natural to seek a classification.
In \cite{Kress&Schoebel&Vollmer}, the authors present an algebraic geometric 
framework for a classification of second order superintegrable systems in 
arbitrarily high dimension, and on arbitrary geometries. This framework put 
earlier attempts by various authors (see below) onto a firm base, yet it is 
not closed under conformal transformations of superintegrable systems. 
Instead, conformal transformations lead to the more general concept of 
conformally superintegrable systems.
\emph{The current paper develops an algebraic geometric framework for 
conformally superintegrable systems that is closed under conformal 
transformations.}
In particular we obtain: Non-degenerate second order 
superintegrable systems on conformal geometries are characterised by a 
\emph{structure tensor}, i.e.\ a trace-free and totally symmetric tensor 
field $S_{ijk}$. This field is invariant under the geometry's transformation 
group, i.e.~under the conformal group, and from $S_{ijk}$ we can reconstruct 
all superintegrable potentials and integrals of the system.

This will lead to a natural definition of \emph{superintegrability on 
conformal manifolds}, whose symmetry group we identify as $\mathfrak 
S=\mathrm{Conf}(M)\rtimes\R^*$.
In this way we naturally incorporate conformal geometry into the theory of 
superintegrable systems. Somewhat surprisingly, this appears to never have 
been attempted before, although it sheds considerable light on the geometry 
underpinning superintegrability and opens the subject for subsequent studies 
using Cartan geometry, tractor calculus, algebraic geometry, representation 
theory and geometric invariant theory.
In particular, superintegrable systems on (pseudo-)Riemannian geometries can 
be viewed as specific \emph{conformal scale choices} of a conformally 
invariant superintegrable system.

%----------------------------------------------------------------------------%
\subsection{State of the art}\label{sec:SoA}
A vast literature exists both on second order conformally superintegrable 
systems and on St\"ackel transformations. Most importantly, second order 
conformally superintegrable systems are classified completely in dimension~2 
\cite{Kalnins&Kress&Miller-I,Kalnins&Kress&Miller-II}. For conformally flat 
spaces in dimension~3, at least so called non-degenerate systems are 
classified \cite{Capel_phdthesis,Kalnins&Kress&Miller-IV}. The conformal 
classes of non-degenerate systems are classified in dimensions~2 and~3 
\cite{Kress07,Capel_phdthesis}.

St\"ackel transform is well understood as an equivalence relation on 
second order (conformally) superintegrable systems.
It has first been introduced, under the name \emph{coupling constant 
	metamorphosis}, by Hietarinta et al.\  in \cite{HGDR84}, for integrable 
Hamiltonian systems with potential.
The concept of St\"ackel transformations has been introduced in 
\cite{BKM1986} by Boyer, Kalnins \& Miller, for integrable systems that admit 
separation of variables.
Coupling constant metamorphosis and St\"ackel transform are not identical in 
general, but do coincide for second order (conformal) integrals of the 
motion. They are therefore equivalent in the context which is of interest 
here. Details on the interrelation between both concepts can be found in 
\cite{Post10}.
Higher order integrals are also discussed in \cite{Kalnins_2009}, where it is 
proven that in general coupling constant metamorphosis neither preserves the 
order of the integrals of the motion, nor even their polynomiality in momenta.
A multi-parameter generalisation of St\"ackel transform exists as well
\cite{Sergyeyev_2008,Blaszak_2012,Blaszak_2017}. Instead of following this 
direction, we are going to use a geometric approach that does not rely on a 
specific parametrisation of the space of compatible potentials of a given 
superintegrable system, but instead is based on the symmetry groups mentioned 
in Section~\ref{sec:geometry.sis} above.

St\"ackel transformations can historically be traced back to 
\emph{Maupertuis-Jacobi transformations}, which take a Hamiltonian with 
potential to a potential-free one, see for example 
\cite{Bolsinov1995,Tsiganov}.
Equivalence classes of superintegrable systems, under St\"ackel transforms, 
have been established for dimension~2 and~3 in \cite{Kress07} and in 
\cite{Kalnins&Kress&Miller-IV,Capel_phdthesis}, respectively. For these 
geometries, every conformally superintegrable system can, via a St\"ackel 
transformation using the potential, be transformed into a 
properly\footnote{For clarity, in this paper 
	we use the adjective ``proper'' to refer to superintegrable systems, 
	emphasising the distinction to \emph{conformally} superintegrable 
	systems.} 
superintegrable system \cite{Capel_phdthesis}.

As mentioned, the existing work on second order conformally superintegrable 
systems and their conformal equivalence is restricted to dimensions 2 and 3. 
In dimension 2, a classification exists, but ignores the geometric structure 
of the classification space~\cite{Kress07}. In dimension 3, a 
classification in terms of representations of the rotation group exists
\cite{Capel_phdthesis,Capel&Kress,Capel&Kress&Post}. These latter references 
are one major inspiration for our work as they highlight the power of the 
geometric approach, revealing for example a natural algebraic hierarchy of 
systems related to an inclusion tree of certain algebraic ideals. 
Unfortunately, there is little hope to apply the methods used in those 
references to higher dimensions, neither conceptually nor practically, as the 
equations become ever more extensive with increasing dimension.
\emph{The current paper develops a new approach, extending and generalising 
the framework from \cite{Kress&Schoebel,Kress&Schoebel&Vollmer}, which 
formulates the governing equations for second order properly superintegrable 
systems in dimension $n\geqslant3$ in a concise form, making the problem 
manageable for higher dimensions.}

For completeness we mention that conformal transformations are not the only 
possible transformations of superintegrable systems. For instance, B\^ocher 
transformations of certain conformally superintegrable systems are studied in 
\cite{KMS2016,Capel&Kress&Post} and there is some indication that they can be 
understood as boundaries of orbit closures on the algebraic geometric variety 
classifying the superintegrable systems \cite{Kress&Schoebel}.
Yet another transformation of superintegrable systems is possible if the 
underlying metrics share the same geodesics up to reparametrisation. Such 
metrics are called projectively (or geodesically) equivalent. For some 
examples of superintegrable systems defined on projectively equivalent 
geometries, see \cite{Valent,KKMW03,Vollmer_2020}.

To summarise, second order non-degenerate conformally superintegrable systems 
are to date classified in dimensions two and three, for manifolds that are 
conformally flat. Higher dimensions are out of the scope of traditional 
methods, which rely on the correspondence with properly superintegrable 
systems and on the extensive use of computer algebra. A particular challenge 
with traditional methods is the fast growth of the number of partial 
differential equations with increasing dimension. \emph{In the current paper, 
	we shall overcome this problem and outline how to approach the 
	classification of second order conformally superintegrable systems in 
	arbitrarily high dimension.}
For the most prolific class of systems we find, somewhat surprisingly, that 
the underlying structure equations reduce to only a single, algebraic 
equation of degree~2.

%----------------------------------------------------------------------------%
\subsection{Special functions and superintegrable systems}
Special functions are ubiquitous and powerful tools in science and 
technology. For instance spherical harmonics appear in the solution of 
the Schr\"odinger equation for the hydrogen atom, eventually giving rise to 
the periodic table of the elements.
Superintegrable systems have long been known as a rich source of 
special functions, such as hypergeometric orthogonal
polynomials~\cite{KMP07,KMP13}, Painlevé transcendants
\cite{Marquette_2020,Gravel}, Jacobi-Dunkl polynomials \cite{GIVZ13}, and 
exceptional polynomials
\cite{Post&Tsujimoto&Vinet,Hoque&Marquette&Post&Zhang}.
In particular, a striking resemblance has been found between two directed 
graphs: The first describes degenerations and confluences of hypergeometric 
polynomials in the Askey scheme \cite{Askey,Askey&Wilson}. The vertices of 
the other graph represent (conformal classes of) superintegrable systems in 
dimension~2, and 
its edges 
represent orbit degenerations and B{\^o}cher contractions.
In dimension 3, the generic superintegrable system on the 3-sphere has indeed 
been related to bivariate Wilson polynomials \cite{KMP11}, and interbasis 
expansions for the isotropic 3D harmonic oscillator are linked to bivariate 
Krawtchouk polynomials \cite{GVZ14}.

A systematic documentation is indispensable for the use of special functions, 
and also needs to include their properties, interrelations and 
hierarchies.
Unfortunately, such documentation quickly becomes extensive and laborious. 
The Bateman Manuscript Project, for instance, fills five volumes 
\cite{Bateman53,Bateman54} and the steadily 
growing Mathematical Functions Site \cite{Wolfram} comprises to date more 
than 300,000 formulae.
In the face of the sheer amount of information, a natural question is whether 
there is a unified theory of special functions.
While a general unified theory might be out of reach, a systematisation of 
certain classes of special functions is a realistic goal. Ideally, such a 
framework should explain and organise at least some of the major properties 
of the functions it comprises.
One attempt at such a partial systematisation is the Askey-Wilson scheme 
\cite{Askey,Askey&Wilson}, which establishes a hierarchy of hypergeometric 
orthogonal polynomials.

Conformal superintegrability has also been related to different Laplace 
equations~\cite{Kalnins&Kress&Miller&Post11,Kalnins&Miller2016}, particularly 
in the context of the corresponding quantum systems.
With respect to special functions we find that second order properly 
superintegrable systems on constant curvature spaces correspond to 
eigenfunctions of the Laplace operator, where the eigenvalues are determined 
by the scalar curvature.

The classification of special functions arising from superintegrable systems 
is clearly out of the scope of the current paper.
Our approach has the potential to explain this correspondence, including 
higher-dimensional hypergeometric polynomials and higher dimensional 
generalisations of the Askey-Wilson scheme.

%----------------------------------------------------------------------------%
\subsection{Classifying second order superintegrability in arbitrarily high 
dimension}

In reference~\cite{Kress&Schoebel&Vollmer} the authors have developed an 
algebraic geometric framework for the classification of superintegrable 
systems. This framework generalises previous work in dimension two 
\cite{Kress&Schoebel} to arbitrarily high dimensions.
Older works in the field are \cite{Kalnins&Kress&Miller} and 
\cite{Kalnins&Kress&Miller3D} for dimensions two and three, respectively.
While \cite{Kress&Schoebel&Vollmer} for the first time provides a framework 
to classify, in an algebraic geometric way, superintegrable systems in 
arbitrarily high dimensions, this framework in its original form is not yet 
closed under conformal transformations.
\emph{The present paper extends the existing algebraic geometric framework to 
conformally superintegrable systems on (pseudo-)Riemannian metrics. This new 
framework is closed under conformal transformations}.
Second order conformally superintegrable systems will be thoroughly 
introduced in Section~\ref{sec:conf.SIS} and~\ref{sec:conformal.equivalence}.
Conceptually, these systems are traditionally defined using a Hamiltonian
\begin{equation}\label{eqn:natural.hamiltonian}
	H = g^{ij}(q)p_ip_j+V(q)
\end{equation}
where $g_{ij}$ denotes the underlying metric and where $q$ and $p$ are the 
canonical position and momenta variables on the manifold.
A second order conformally superintegrable system is a Hamiltonian system 
with a sufficiently high number of functions $F:T^*M\to\R$,
\[
	F=K^{ij}(q)p_ip_j+W(q)\,,
\]
satisfying 
\begin{equation}\label{eqn:conformal.integral}
	\{ H,F \} = \omega_q(p)\,H
\end{equation}
for some 1-form $\omega=\omega_idx^i$.
In this context, the scalar function $V$ is called the \emph{potential}.
Functions~$F$ satisfying \eqref{eqn:conformal.integral} are called 
\emph{(conformal) integrals}.
The collection of integrals~$F$ forms a linear space~$\mathcal F$.
On the other hand, if we start from the space $\mathcal F$, we are going to 
see that the collection~$\mathcal V$ of potentials~$V$ compatible with 
$\mathcal F$ forms a linear space of functions on the base manifold.

Before formulating the main results of the present paper, we would like to 
draw the reader's attention to a subtlety regarding the spaces $\mathcal F$ 
and $\mathcal V$, as different conventions can be found in the literature.
In the present paper, we will be working with the maximal spaces $\mathcal V$ 
and $\mathcal F$ (modulo some normalisation detailed below). This allows us 
to formulate the results in a clean and concise manner. This convention also 
appears to be the most common one in the relevant literature on St\"ackel 
transforms, as it ensures well-definedness of the conformal equivalence of 
two superintegrable Hamiltonians.
A competing convention regularly found in the literature is to restrict to a 
linear subspace of~$\mathcal F$ with a basis of \emph{functionally 
independent} integrals $F$ and with a Hamiltonian for which $V\in\mathcal V$ 
is a specific function. Our results can straightforwardly, but 
somehow tediously, be adapted to these settings. Such specifications are 
omitted here as they do not contribute to further understanding the geometry 
underpinning superintegrability, which is the main objective of the paper.

%----------------------------------------------------------------------------%
\subsection{First Main Result: Conformal superintegrability in higher 
			dimensions}\label{sec:main.result.higher.dimension}
The method carried out in reference \cite{Kress&Schoebel&Vollmer} facilitates 
the classification of second order properly superintegrable systems, 
in particular of so-called \emph{abundant} systems. Abundantness is going to 
be introduced thoroughly later, and so here we limit ourselves here to saying 
that these systems comprise all known non-degenerate second order conformally 
superintegrable systems.
In the present paper, we extend the framework to conformally superintegrable 
systems. We find that it is closed under conformal transformations and leads 
to a well-defined concept of superintegrability on conformal geometries 
arising from conformal equivalence classes of conformally superintegrable 
systems.
For the abundant case in dimensions~$n\geqslant3$, we show in 
Sections~\ref{sec:abundant} and~\ref{sec:ccsis} that such systems 
are in natural correspondence with (trace-free) cubic forms 
$\Psi_{ijk}p^ip^jp^k$ on $\R^n$ that satisfy the simple algebraic equation
\begin{equation}\label{eqn:master}
	\left( g^{ab}\Psi\indices{_{bi[j}}\Psi_{k]la} \right)_\circ = 0
\end{equation}
where $g^{ab}$ is an inner product on $\R^n$ with the same signature as the 
metrics on the underlying manifold. Here square brackets denote 
antisymmetrisation over enclosed indices, and the subscript $\circ$ stands 
for projection onto the trace-free part.

We show that initial data in the form of a cubic form 
$\Psi_{ijk}p^ip^jp^k$ satisfying \eqref{eqn:master} can be extended, locally 
to a \emph{conformal structure tensor} $S_{ijk}$ of an abundant second order 
superintegrable system.
We make this precise by introducing the concept of c-superintegrable systems, 
i.e.\ conformal equivalence classes whose underlying geometry is a conformal 
manifold.
In Section~\ref{sec:invariant.nonlinear.prolongation} we derive conformally 
invariant structural equations for abundant c-superintegrable systems. The 
equations governing abundant properly superintegrable 
systems~\cite{Kress&Schoebel&Vollmer} naturally follow from the equations we 
present here.

Condition~\eqref{eqn:master} is conformally invariant, and therefore a 
suitable foundation for an algebraic geometric classification of second order 
systems on the level of conformal geometries.
Condition~\eqref{eqn:master} is also surprisingly simple, and in dimensions 
$n\geqslant4$ it encodes new obstructions to conformal superintegrability. 
These obstructions do not exist in lower dimensions and have not been 
revealed by classical approaches. It is worthwhile to compare 
\eqref{eqn:master} to the corresponding equation for the case of proper 
superintegrability in~\cite{Kress&Schoebel&Vollmer}, which is not projected 
onto the trace-free part.

We also show: \emph{Abundant conformally superintegrable systems can only 
exist on conformally flat geometries. Such systems naturally correspond to 
solutions of Equation~\eqref{eqn:master}. The task of classifying equivalence 
classes of $n$-dimensional conformally superintegrable systems is therefore
equivalent to classifying harmonic cubics in $n$ variables that satisfy 
\eqref{eqn:master}.}
Note that while a general classification of harmonic cubics under the 
rotation group is out of sight, a classification under the additional 
condition~\eqref{eqn:master} may well be simple enough to admit 
a managable solution.

In dimension~$n=3$, particularly, \eqref{eqn:master} is trivially satisfied.
Thus abundant superintegrable systems in dimension~$3$ are in 
1-to-1 correspondance with harmonic ternary cubic forms, or, equivalently, 
with univariate sextic polynomials
\[
	p(x) = a_6x^6 + a_5x^5 + a_4x^4 + a_3x^3 + a_2x^2 + a_1x + a_0\,.
\]
The details are discussed in Section~\ref{sec:3D}.
It is known that any conformally superintegrable system is St\"ackel 
equivalent to a properly superintegrable systems \cite{Capel_phdthesis}. In 
dimension~3, every conformally superintegrable system is even St\"ackel 
equivalent to a properly superintegrable system on a constant curvature space 
\cite[Theorem 4]{Kalnins&Kress&Miller-IV}.

%----------------------------------------------------------------------------%
\subsection{Second Main Result: Superintegrable systems on constant curvature 
geometries.}

In our framework, second order conformally superintegrable systems are 
conformal scale choices of c-superintegrable systems.
Thus the conformal structure tensor $S_{ijk}$ determines a conformally
superintegrable system up to the choice of a \emph{conformal scale} expressed 
via a function $\cs$ that transforms as a weight-1 tensor density.
For the prolific class of abundant second order conformally superintegrable 
systems we find that the conformal scale satisfies a Helmholtz like equation,
\begin{equation}\label{eqn:Helmholtz.conformal}
	\left( R-4\frac{n-1}{n-2}\Delta \right)\cs^{1-\tfrac{n}{2}} = 
	S\,\cs^{1-\tfrac{n}{2}}\,,
\end{equation}
where $S=S_{abc}S^{abc}$ is a conformal density obtained from the structure 
tensor $S_{ijk}$ mentioned earlier, and where $R$ is the scalar curvature.
The operator on the left hand side of \eqref{eqn:Helmholtz.conformal} is the 
conformal Laplace operator.

If we restrict to properly superintegrable systems, the conformal structure 
tensor $S_{ijk}$ determines a superintegrable system up to provision of a 
suitable conformal scale. 
In the present paper, we prove the following:
\emph{For abundant properly superintegrable systems on manifolds of constant 
sectional curvature, the conformal scale $\cs$ is (the power of) an 
eigenfunction of the \emph{conformal} Laplacian} for an eigenvalue determined 
by the curvature $R$,
\begin{equation}\label{eqn:Laplace.scale}
	\left( \Delta+2\frac{n+1}{n-1} R\right)\,\cs^{n+2} = 0\,,
\end{equation}
which holds in addition to~\eqref{eqn:Helmholtz.conformal}, see 
Theorem~\ref{thm:eigenvalue.equation}.
Note that the operator in~\eqref{eqn:Laplace.scale} is not conformally 
invariant as Equation~\eqref{eqn:Laplace.scale} does not describe a 
property of the conformal class, but of an individual superintegrable system.
In particular we find that on the $n$-sphere the conformal scale function 
satisfies a Laplace equation with quantum number $n+1$.
Theorem~\ref{thm:generic.system} shows: \emph{The generic system on the 
$n$-sphere is never conformally equivalent to a superintegrable system on 
flat space.}

Moreover, in Theorem~\ref{thm:B.transformation} we extend the concept of 
structure functions 
to c-superintegrable systems:
If two abundant second order properly superintegrable systems on 
constant curvature spaces are conformally equivalent, then their structure 
functions behave like conformal densities of weight $-2$.
In fact (up to a certain conformally equivariant gauge transformation that 
leaves the conformally equivariant tensor $S_{ijk}$ unchanged)
\[  \mathbf{b}=B\det(g)^{\frac1n}\ \in\mathcal{E}[-2], \]
computed from the structure function $B$ and metric $g$ of any one of the two 
systems, coincides with the one computed from the structure function and 
metric of the other system, see Corollary~\ref{cor:b.density}.

%For abundant second order properly superintegrable systems in dimension 
%$n\geqslant3$, we find that \emph{the trace of the structure tensor 
%$T_{ijk}$ is 
%determined by the conformal scale function $\cs$ of the system},
%\[
%	T\indices{_{ia}^a} = -\frac{3n}{(n-1)(n+2)}\ln(\cs)\,.
%\]

\bigskip
\noindent
\textbf{Acknowledgements.}
We would like to thank Rod Gover for discussions and insightful input from 
the viewpoint of parabolic and conformal geometry, and for travel support via 
the Royal Society of New Zealand from Marsden Grant 16-UOA-051. We are 
grateful towards Benjamin McMillan, Thomas Leistner and Paul-Andy Nagy for 
helpful comments and discussions from the angle of conformal geometry and 
representation theory.
We thank Vladimir Matveev for comments on integrability, analyticity and 
projective geometry.
We are indepted to Joshua Capel for his insights into superintegrable systems 
in dimension 3 and the classification of their St\"ackel classes. We also 
thank Jeremy Nugent for discussions.

We thank the authors and contributors of the computer algebra system 
\texttt{cadabra2} \cite{Peeters06,Peeters07}, which we have used to find, 
prove and simplify some of the most important results in this work, for 
providing, maintaining and extending their software and distributing it
under a free license.

We acknowledge support from ARC Discovery Project grant DP190102360.
Funded by Deutsche Forschungsgemeinschaft (DFG, German Research Foundation) 
Project number 353063958.
AV also acknowledges support through the project PRIN 2017 ``Real and Complex 
Manifolds: Topology, Geometry and holomorphic dynamics'', and the MIUR grant 
``Dipartimenti di Eccellenza 2018-2022 (E11G18000350001)''. AV is a member of 
GNSAGA of INdAM (Istituto Nazionale di Alta Matematica).

%============================================================================%
\section{Preliminaries}\label{sec:preliminaries}

Before generalising to \emph{conformally} superintegrable systems, it is 
instructive to briefly review \emph{properly} superintegrable systems. We 
recall that for clarity, the adjective ``proper'' is used to refer to 
superintegrable systems, whenever a distinction from \emph{conformally} 
superintegrable systems is required.
While self-contained, this review only highlights the aspects needed for a 
later comparison to conformally superintegrable systems.
For a more in-depth review of proper superintegrability, we refer the 
interested reader to the literature cited in the introduction.
Reference~\cite{KKM_2018}, in addition, provides a synopsis of a large part 
of the existing literature.
The PhD thesis \cite{Capel_phdthesis} and the articles 
\cite{Kress&Schoebel,Kress&Schoebel&Vollmer} are foundational for the
algebraic geometric approach to properly superintegrable systems, see also 
\cite{KKM07b,Kalnins&Kress&Miller3D}.

A Hamiltonian system is a dynamical system characterised by a
function $H(\mathbf p,\mathbf q)$, referred to as \emph{Hamiltonian}.
Here, the position and momentum coordinates on the phase space are denoted by 
$\mathbf q=(q_1,\ldots,q_n)$ and $\mathbf p=(p_1,\ldots,p_n)$, respectively.
The evolution of the system is determined by Hamilton's equations
\begin{align}\label{eqn:hamilton.equations}
	\dot{\mathbf p}&=-\D{H}{\mathbf q}&
	\dot{\mathbf q}&=+\D{H}{\mathbf p}\,.
\end{align}
An \emph{integral}, aka \emph{first integral} or \emph{constant of the 
motion}, for the Hamiltonian $H$ is a function $F(\mathbf p,\mathbf q)$ on 
phase space that commutes with $H$ with respect to the canonical Poisson 
bracket.
It is therefore constant along solutions of \eqref{eqn:hamilton.equations},
\begin{equation}\label{eqn:integral.of.motion}
	\dot F = \{F,H\}
	= \D{F}{\mathbf q}\,\D{H}{\mathbf p}
	- \D{F}{\mathbf p}\,\D{H}{\mathbf q}
%	= \sum_{i=1}^n
%	\left(
%		\D{F}{q_i}\,\D{H}{p_i}
%		- \D{F}{p_i}\,\D{H}{q_i}
%	\right)
	=0.
\end{equation}
Note that, if the underlying manifold is endowed with a (pseudo-)Riemannian 
metric $g$, the usual derivatives in this equation may be replaced by 
covariant derivatives, using the Levi-Civita metric $\nabla^g$, without 
changing the Poisson bracket. For simplicity we assume the metric to be 
analytic from now on.

An integral restricts the trajectory of the system to a hypersurface in phase 
space.
A (properly) \emph{superintegrable system} is a Hamiltonian system that 
possesses the maximal number of $2n-1$ functionally independent 
constants of motion $F^{(0)},\ldots,F^{(2n-2)}$. Its trajectories in phase 
space are the (unparametrised) curves given as the intersections of the 
hypersurfaces $F^{(\alpha)}(\mathbf p,\mathbf q)=c^{(\alpha)}$, where the 
constants $c^{(\alpha)}$ are determined from the initial conditions.
For convenience it is customary to choose $F^{(0)}=H$ without loss of 
generality.
In particular, we assume the base manifold is endowed with a 
(pseudo-)Riemannian metric $g$ and a natural 
Hamiltonian~\eqref{eqn:natural.hamiltonian},
\[
	H = G(\mathbf q,\mathbf p) + V(\mathbf q)\,,
\]
where $G(\mathbf q,\mathbf p)=g_{\mathbf q}(\mathbf p,\mathbf p)$ denotes the 
kinetic part and 
$V(\mathbf q)$ is a smooth scalar function called \emph{potential}.
\begin{definition}
	A \emph{second order} superintegrable systems is a Hamiltonian together 
	with a linear space $\mathcal F$ of integrals of the form
	\begin{equation}\label{eqn:quadratic.integral}
		F = K(\mathbf q,\mathbf p) + W(\mathbf q) 
		:= K^{ij}(\mathbf q)p_ip_j + W(\mathbf q)\,,
	\end{equation}
	satisfying \eqref{eqn:integral.of.motion}. Moreover, $\mathcal F$ must 
	contain $2n-1$ integrals that are functionally independent.
\end{definition}
\noindent Note that $\dim(\mathcal F)\geqslant2n-1$. In case of the equality, 
it 
is common practice to only specify the $2n-1$ linearly independent generators 
$F^{(\alpha)}$.
We also recall that, for~\eqref{eqn:quadratic.integral}, 
Equation~\eqref{eqn:integral.of.motion} is a polynomial condition in the 
momenta with homogeneous components of cubic and linear degree, respectively:
\begin{subequations}\label{eqn:proper.cubic.linear}
\begin{align}
	\label{eqn:proper.cubic}
	\{ K , G \} &= 0 \\
	\label{eqn:proper.linear}
	\{ K , V \} + \{ W , G \} &= 0
\end{align}
\end{subequations}
Condition~\eqref{eqn:proper.cubic} is equivalent to the requirement that the 
components $K_{ij}$ in $K(\mathbf q,\mathbf p)=K_{ij}p^ip^j$ are components 
of a Killing tensor field
\begin{equation}
	K_{(ij,k)} = 0\,.
\end{equation}
Here, the comma denotes a covariant derivative and round brackets denote 
symmetrisation in the enclosed indices.
Condition~\eqref{eqn:proper.linear} can be rewritten in the form
\[
	W_{,j} = K\indices{_j^k}V_{,k}
	\quad\text{or even}\quad
	dW = KdV\,,
\]
where by a slight abuse of notation $K$ denotes the endomorphism obtained 
from $K_{ij}$ using the metric~$g$.
The integrability condition for $W$ is known as the \emph{Bertrand-Darboux 
condition} \cite{bertrand_1857,darboux_1901},
\begin{equation}\label{eqn:Bertrand.Darboux}
	dKdV = 0\,.
\end{equation}
The Bertrand-Darboux Equation~\eqref{eqn:Bertrand.Darboux} is the 
compatibility condition for the potential $V$ and the space of Killing 
tensors~$K_{ij}$.
Let us denote the linear space of kinetic parts of the integrals  
$F\in\mathcal{F}$, viewed as endomorphisms, by
\[
	\mathcal K = \{ K : K(\mathbf p,\mathbf p)+W\in\mathcal F
						\text{ for some $W$} \}.
\]

\begin{definition}\label{def:proper.max.spaces}
	For a second order superintegrable system with potential $V$, we 
	introduce the spaces
	\begin{align*}
		\mathcal V^\text{max}
		&= \{ V : dKdV=0 \text{ holds for every }\ K\in\mathcal K \}
		\\
		\mathcal K^\text{max}
		&= \{ K : dKdV=0 \text{ holds for every }\ 
					V\in\mathcal V^\text{max} \}		
	\end{align*}
\end{definition}

\begin{remark}
	A second order superintegrable system is said to be \emph{irreducible} if 
	the endomorphisms $K\indices{_i^j}=g^{ja}K_{ia}$ obtained from its 
	associated Killing tensors $K\in\mathcal K$ form an irreducible set 
	\cite{Kress&Schoebel&Vollmer}.
	In reference \cite{Kress&Schoebel&Vollmer}, it is shown that for such 
	irreducible systems we can solve \eqref{eqn:Bertrand.Darboux} for all 
	second derivatives of the potential except its Laplacian. Thus the 
	\emph{Wilczynski equation} is obtained,
	\begin{equation}\label{eqn:Wilczynski.proper}
		V_{,ij} = T\indices{_{ij}^k}V_{,k} + \frac1n g_{ij} \Delta V\,,
	\end{equation}
	where $T\indices{_{ij}^k}$ is a tensor symmetric and trace-free in the 
	first two indices, depending on the values of the Killing 
	tensors $K^{(\alpha)}$ and their derivatives.
	
	The properties of the partial differential equation 
	\eqref{eqn:Wilczynski.proper} are discussed thoroughly in 
	\cite{Kress&Schoebel&Vollmer}, and similar equations appear in 
	\cite{Kalnins&Kress&Miller-III}. The most important fact is that 
	in~\eqref{eqn:Wilczynski.proper} the tensor $T\indices{_{ij}^k}$ is 
	determined by $\mathcal K$ independently from the potential. More 
	precisely, at a point $x_0\in M$, $T\indices{_{ij}^k}$ is determined by 
	the values of the Killing tensors $K_{ij}$ and their derivatives in $x_0$.
\end{remark}

\noindent In the classification theory of second order superintegrable 
systems, \emph{non-degenerate systems} have received particular 
attention, e.g.~\cite{KKM_2018,Kress&Schoebel&Vollmer,Capel_phdthesis}. 
These are the systems satisfying~\eqref{eqn:Wilczynski.proper} for which the 
dimension of $\mathcal V^\text{max}$ is maximal,\footnote{Note that for an 
analytic metric, the Killing tensors are analytic, and thus the structure 
tensor and the potentials are also analytic.} i.e.
\[
	\dim(\mathcal V^\text{max}) = n+2\,.
\]
The integrability conditions of~\eqref{eqn:Wilczynski.proper} are then 
generically satisfied 
\cite{KKM07c,Kalnins&Kress&Miller3D,Kress&Schoebel&Vollmer}.
Resubstituting \eqref{eqn:Wilczynski.proper} into 
\eqref{eqn:Bertrand.Darboux} and considering the coefficients of $\nabla V$ 
and $\Delta V$, one furthermore finds
\begin{equation}\label{eqn:shortcut.eqn}
	K_{ij,k}=\frac13\young(ji,k)T\indices{^a_{ji}}K_{ak}
\end{equation}
for non-degenerate systems.
Now consider non-degenerate systems for which the dimension of $\mathcal 
K^\text{max}$ is maximal as well, i.e.
\[
	\dim(\mathcal K^\text{max}) = \frac{n(n+1)}{2}\,.
\]
Such systems are called \emph{abundant} 
\cite{Kress&Schoebel&Vollmer}.
Abundantness implies that the integrability conditions 
of~\eqref{eqn:shortcut.eqn} are generically satisfied.
Abundant systems should be considered the most important class of second 
order superintegrable systems, for at least two reasons: First, in dimensions 
two and three, all non-degenerate systems are abundant 
\cite{Kalnins&Kress&Miller-III}. Second, all examples 
of non-degenerate systems known to date are abundant systems.

%----------------------------------------------------------------------------%
\subsection{St\"ackel equivalence}\label{sec:staeckel}
In the literature, St\"ackel transformations are typically introduced by 
considering a Hamiltonian with a coupling constant. We shall start following 
this convention, but we are then going to present an alternative and 
equivalent formulation that is better suited for our purposes.
Consider a family of second order superintegrable systems on a 
(pseudo-)Riemannian manifold $(M,g)$ given by the family of Hamiltonians 
$H_\beta=H_0+V+\beta U$. Here $H_0=g(\mathbf p,\mathbf 
p)$ is the \emph{free Hamiltonian}, $H_0+V$ is the background 
Hamiltonian, and $\beta$ is called the \emph{coupling parameter}.
The concept of St\"ackel equivalence is based on the following fact, see for 
example~\cite{Kalnins_2009,Post10,Kalnins&Kress&Miller-III,HGDR84,BKM1986}.
\begin{lemma}\label{la:Stackel.equivalent.H}
 Let $H=H_0+V+\beta\,U$ be a family of second order superintegrable 
 Hamiltonians with integrals $F(\beta)$, for $V,U\in\mathcal V^\text{max}$.
 Then the Hamiltonian $\tilde H=\frac{H+\eta}{U}$ admits the integral of 
 motion $\tilde F(\eta)=F(\tilde 
 H)$, parametrised by $\eta$.
\end{lemma}
\noindent The Hamiltonian $\tilde H$ is called the \emph{St\"ackel transform} 
of $H$ with conformal factor $U$.
While the lemma is true for either sign, it will often be preferrable to work 
with $U>0$ exclusively, in order to preserve the signature of the underlying 
metric. Note that locally this is always possible by a redefinition of 
$\beta$, if needed.

St\"ackel transformations are also known under the name \emph{coupling 
constant metamorphism} since they can be thought of as a transformation 
exchanging the roles of the coupling parameter and the energy. However, we 
remark that the two concepts coincide for second order superintegrability, 
but they are not identical in general \cite{Kalnins_2009,Post10}.

Note that in Lemma~\ref{la:Stackel.equivalent.H}, it has been 
exploited that we may add any constant $\eta$ to $H$ without changing the 
integrals. We could analogously have written down the St\"ackel transform of 
the Hamiltonian $H=H_0+V+\beta U+\eta$ admitting the integrals $F^{(\alpha)}$ 
\cite{BKM1986,Post10}, as
\begin{subequations}
\label{eq:staeckel}
	\begin{align}
		\label{eq:staeckel.H}
		\tilde H &= U^{-1}H\,,
		\\ %\quad
		\label{eq:staeckel.F}
		\tilde F^{(\alpha)} &= F^{(\alpha)} + \frac{1-W^{(\alpha)}}{U}\,H\,.
	\end{align}
\end{subequations}
We observe that this transformation preserves the kinetic part up to a trace 
contribution, i.e.\ up to a term proportional to $g(\mathbf p,\mathbf p)$ 
with a coefficient that depends on the position only.

One special case deserves explicit mentioning: When $H=H_0+\beta U$, we 
arrive at $\tilde H=\frac{H_0}{U}+\beta$. This special 
case is known as a \emph{Maupertuis-Jacobi transformation} 
\cite{Tsiganov,Lagrange_1788,Jacobi_1866,Maupertuis_1750}.
For conciseness, we have restricted ourselves to one coupling parameter in 
Lemma~\ref{la:Stackel.equivalent.H}. A multi-parameter generalisation of 
St\"ackel transforms exists as well 
\cite{Sergyeyev_2008,Blaszak_2012,Blaszak_2017}.
We are not going to follow this direction further, however, because for the 
purposes of the present paper a parameter-free viewpoint on St\"ackel 
equivalence is much more appropriate.
Equivalently and better suited for the purposes of the present paper, we 
define St\"ackel equivalence as follows.
\begin{definition}\label{def:staeckel.equivalence}
	Two second order properly superintegrable systems are said to be 
	St\"ackel equivalent if their Hamiltonians and integrals 
	satisfy~\eqref{eq:staeckel}.
\end{definition}

\noindent We observe that for St\"ackel equivalent Hamiltonians $H=H_0+V$ and 
$\tilde H=\tilde H_0+\tilde V$, their underlying metrics are conformally 
equivalent, i.e.~$\tilde g_{ij}=\Omega^2\,g_{ij}$, if $U=\Omega^2>0$. In case 
of negative sign, $U<0$, the metric's signature is merely inverted.
For the corresponding integrals $F=K^{ij}p_ip_j+W$ and $\tilde F=\tilde 
K^{ij}p_ip_j+\tilde W$, \eqref{eq:staeckel} implies
\begin{align*}
	\tilde K
	&= K+\frac{1-W}{U}g
	\\
	\tilde W
	&= W +\frac{1-W}{U}\,V\,.
\end{align*}
The trace-free part of the Killing tensors, with raised indices, 
therefore remain unchanged under St\"ackel transformations.

%----------------------------------------------------------------------------%
\subsection{Symmetry group}\label{sec:projective.V.space.proper}

Consider a non-degenerate second order properly superintegrable system with 
potential $V$.
The symmetry group of such a system is the semi-direct product $\mathfrak 
S=\mathrm{Diff}(M) \rtimes \mathrm{Aff}(\R)$ of the diffeomorphisms of the 
manifold $M$ and the affine group $\mathrm{Aff}(\R)\simeq\R^*\ltimes\R$.
An element $\Phi=(\phi,(a,b))\in\mathfrak S$ transforms a Hamiltonian 
according to
\begin{equation}\label{eqn:symmetry.group.proper}
	\Phi(g^{ij}p_ip_j+V) = \phi^*(g)^{ij}p_ip_j+a\phi^*(V)+b\,,
\end{equation}
where $\phi^*$ is the pullback with $\phi$.
Indeed, the underlying geometry and the space of compatible Killing tensors 
does not change under $\mathfrak S$. Moreover, the structure tensor $T_{ijk}$ 
in \eqref{eqn:Wilczynski.proper} remains unchanged under $\mathfrak S$.
The structure tensor remains unchanged even\footnote{%
	Note that the action might not be proper.}
under $\mathfrak S'=\mathfrak S\times\R^*$, where 
$\Phi'=(\phi,(a,b),c)\in\mathfrak S$ 
transforms a Hamiltonian according to
\begin{equation}\label{eqn:symmetry.structure.tensor.proper}
	\Phi'(g^{ij}p_ip_j+V) = c\phi^*(g)^{ij}p_ip_j+a\phi^*(V)+b\,.
\end{equation}
The quotient of the $(n+2)$-dimensional space $\mathcal V^\text{max}$ under 
the affine group $\mathrm{Aff}(\R)$ is an $n$-dimensional projective space.
Finally, note that the St\"ackel transformations of two equivalent 
Hamiltonians are also equivalent.

%----------------------------------------------------------------------------%
\subsection{Young projectors}

In order to keep the notation concise, tensor symmetries are described by 
Young projectors in the following.
In doing so, we adhere to the convention used for properly superintegrable systems in \cite{Kress&Schoebel&Vollmer}, which we briefly review in the current section.
A comprehensive introduction to representations of symmetric and linear 
groups is out of scope of the present paper, but can be found in 
\cite{Fulton,Fulton&Harris} for instance.

Let $n>0$ be an integer. A partition of $n$ into a sum of ordered, positive 
integers can be represented by a \emph{Young frame}, i.e.\ by non-increasing 
rows of square boxes, which by convention are left-aligned.
For instance, to denote the partition $5=3+1+1$ we may draw the Young frame
\[
	{\Yboxdim{7pt}\yng(3,1,1)}\,.
\]
Irreducible representations of the permutation group~$S_n$ and the induced 
Weyl representations of $\mathrm{GL}(n)$ can also be labelled by Young 
frames. A Young frame filled with tensor index names is called a \emph{Young 
tableau}; it explicitly defines a projector onto an irreducible 
representation.
Two simple examples are complete symmetrisation,
\[
	\begin{ytableau}
		{i_1} & {i_2} & \text{\tiny$\cdots$} & {i_k}
	\end{ytableau}\,,
\]
and antisymmetrization,
\[
	{\begin{ytableau}
		{i_1} \\ {i_2} \\ \text{\tiny$\vdots$} \\ {i_k}
	\end{ytableau}}\,.
\]
For example, a 2-tensor $\tau_{ij}$ can be decomposed into its symmetric and 
antisymmetric parts,
\[
	\tau_{ij}
	= \frac12\,\young(ij)\tau_{ij} + \frac12\,\young(i,j)\tau_{ij}
	= \frac12\,(\tau_{ij}+\tau_{ji})+ \frac12\,(\tau_{ij}-\tau_{ji}).
\]
The symmetric part can be decomposed further, according to irreducible 
representations of $\mathrm{SL}(n)$, into a trace-free and a trace component.
The projection onto the completely trace-free part of a tensor is denoted by 
the sub- or superscript ``$\circ$''. For example,
\[
	\tau_{ij}
	= \frac12\,{\young(ij)}_\circ\,\tau_{ij}
	+ \frac1n\,g_{ij}\,\tau\indices{^a_a}
	+ \frac12\,\young(i,j)\tau_{ij}\,.
\]
A general Young tableau denotes the composition of its row symmetrisers 
and column antisymmetrisers, and by convention the antisymmetrisers are 
applied first. For instance,
\[
	\young(ji,k)T_{ijk} = {\young(ji)\young(j,k)} T_{ijk}
	= (T_{ijk}-T_{ikj})+(T_{jik}-T_{jki})\,.
\]
The adjoint of a Young tableau is given by applying the symmetrisers first,
\[
  {\young(ji,k)}^*T_{ijk} = {\young(j,k)\young(ji)} T_{ijk}
  = (T_{ijk}+T_{jik})-(T_{ikj}+T_{kij})\,.
\]
For a 3-tensor $T_{ijk}$ we have the decomposition
\[
	T_{ijk}
	= \frac16\young(ijk)T_{ijk}
	+ \frac14\young(ij,k)T_{ijk}
	+ \frac14\young(ik,j)T_{ijk}
	+ \frac16\young(i,j,k)T_{ijk}.
\]
One particular $4$-index Young tableaux that we make intensive use of is the 
projector
\[
	{\young(ij,kl)}^*T_{ijkl}
	=
	\young(i,k)
	\young(j,l)
	\young(ij)
	\young(kl)
	T_{ijkl}\,,
\]
which projects onto algebraic curvature tensors.
The well known Ricci decomposition can then be written as
\[
	R_{ijkl}
	=W_{ijkl}
	+\frac1{4 (n-1)}{\young(ik,jl)}^*\mathring R_{ik}g_{jl}
	+\frac1{8n(n-1)}{\young(ik,jl)}^*g_{ik}g_{jl}.
\]
where
\[
	W_{ijkl} = \frac{1}{12}\,{\young(ik,jl)}^*_\circ\,R_{ijkl}
\]
is the Weyl curvature, $\mathring R_{ij}$ the trace-free part of the Ricci 
tensor and $R$ the scalar curvature.
The \emph{Schouten tensor} is given by
\begin{equation}\label{eqn:Schouten}
	(n-2)\schouten_{ij}
	= R_{ij}-\tfrac1{2(n-1)}Rg_{ij}
	= \mathring R_{ij} + \tfrac{n-2}{2n(n-1)}Rg_{ij}\,.
\end{equation}

%============================================================================%
\section{Conformal structure tensors}

In the present chapter superintegrable systems on 
(pseudo-)Riemannian manifolds are generalised to \emph{conformally 
superintegrable systems}. We subsequently reconsider them as systems on 
conformal geometries.
Before we begin, we need to introduce the conformal counterpart of Killing tensors.

\begin{definition}
	A (second order) \emph{conformal Killing tensor} is a symmetric tensor 
	field $C_{ij}$ on a	(pseudo-)Riemannian manifold satisfying the 
	conformal Killing equation
	\begin{equation}
		\label{eq:Killing}
		\young(ijk) C_{ij,k}
		=\young(ijk)\,g_{ij}\omega_k\,,
	\end{equation}
	where $\omega$ is a 1-form.
\end{definition}

\noindent The 1-form $\omega$ can be expressed in terms of the conformal 
Killing tensor. Indeed, contracting~\eqref{eq:Killing} in $(i,j)$, we find
\begin{equation}\label{eqn:omega.from.divC}
	\omega_k = \frac{1}{n+2}\,\lb
					2C\indices{^a_{k,a}}
					+ C\indices{^a_{a,k}}
				\rb\,.
\end{equation}

\begin{remark}\label{rmk:fracefreeness.conformal.killing.tensors}~

\noindent(i) Any Killing tensor is also a conformal Killing tensor, with 
$\omega=0$. 
In particular, the metric $g$ is trivially a conformal Killing tensor.

\noindent(ii) If $K_{ij}$ is a conformal Killing tensor, any trace 
modification $C_{ij}=K_{ij}+\lambda g_{ij}$ is also a conformal Killing 
tensor. If $K_{ij}$ is a proper Killing tensor, $C_{ij}$ is a conformal 
Killing 
tensor with $\omega=d\lambda$.
\end{remark}

We mention that while for a proper Killing tensor $K_{ij}$, the function 
$K(\mathbf p,\mathbf p)$ is preserved along geodesics, for a conformal 
Killing tensor $C_{ij}$ the function $C(\mathbf p,\mathbf p)$ is  preserved 
along null geodesics.
% as these are the distinguished curves of conformal geometry.
% For a broader view on distinguished curves and integrability in various 
%geometries, see \cite{Gover&Snell&Taghavi}. Null geodesics in the light of 
%applications in physics are discussed, for instance, in 
%\cite{Fialkow1939,FS1987,Turner&Horne2020}.

%----------------------------------------------------------------------------%
\subsection{Conformally superintegrable systems}\label{sec:conf.SIS}
The concept of a superintegrable system on a (pseudo-)Riemannian manifolds is 
generalised as follows.

\begin{definition}\label{def:main.notions}
~
	\begin{enumerate}
		\item
			By a \emph{conformally (maximally) superintegrable system}, we mean a Hamiltonian system
			admitting $2n-1$ functionally independent conformal integrals of 
			the motion
			$F^{(\alpha)}$,
			\begin{align}
				\label{eq:integral}
				\{F^{(\alpha)},H\}&= \omega^{(\alpha)}\,H &
				\alpha&=0,1,\ldots,2n-2,
			\end{align}
			with a function $\omega(\mathbf{p},\mathbf{q})$ polynomial in 
			momenta.
			The Hamiltonian can be assumed to be among the conformal 
			integrals. Thus by convention we set
			\[
				F^{(0)} = H\,,\quad \omega^{(0)}=0.
			\]
		\item
			A conformal integral of the motion is \emph{second order} if it 
			is of the form
			\begin{equation}
				\label{eq:quadratic}
				F^{(\alpha)}=C^{(\alpha)}+V^{(\alpha)},
			\end{equation}
			where
			\[
				C^{(\alpha)}(\mathbf p,\mathbf 
				q)=\sum_{i=1}^nC^{(\alpha)}_{ij}(\mathbf q)p^ip^j
			\]
			is quadratic in momenta and $V^{(\alpha)}=V^{(\alpha)}(\mathbf q)$
			a function depending only on positions.
			In this case, the function $\omega$ has to be linear in the 
			momenta $\mathbf{p}$.
		\item
			A conformally superintegrable system is \emph{second order} if 
			its conformal integrals $F^{(\alpha)}$ are second order and
			\begin{equation}
				\label{eq:Hamiltonian}
				H=G+V,
			\end{equation}
			where
			\[
				G(\mathbf p,\mathbf q)=\sum_{i=1}^ng_{ij}(\mathbf q)p^ip^j
			\]
			is given by the (pseudo-)Riemannian metric $g_{ij}(\mathbf q)$ on 
			the underlying
			manifold.
		\item
			We call $V$ a \emph{conformal superintegrable potential} if the Hamiltonian
			\eqref{eq:Hamiltonian} gives rise to a conformally superintegrable system.
	\end{enumerate}
\end{definition}

A function $F(\mathbf q(t),\mathbf p(t))$ that satisfies \eqref{eq:integral} 
is constant on the null locus of the Hamiltonian, since
\[
		\dot F = \{F,H\}
		= \omega\ H\,.
\]
By adding a constant $c$ to the Hamiltonian, we achieve that $F$ is constant 
on shells where the new Hamiltonian is constant and equal to $c$.
Since we are concerned exclusively with second order maximally 
superintegrable systems, we omit the terms ``second order'' and 
``maximally'' without further mentioning.

\begin{assume}\label{assumptions:tracefree}
	From now on, unless otherwise stated, we assume that the quadratic 
	parts correspond to \emph{trace-free} conformal Killing tensors, 
	except for the Hamiltonian $H=F^{(0)}$. This is no restriction, as the 
	trace-free part of any such conformal Killing tensor is itself a 
	conformal Killing tensor.
\end{assume}

\noindent For \eqref{eq:quadratic}, condition \eqref{eq:integral} splits into 
two homogeneous parts with respect to momenta. These parts are cubic 
respectively 
linear in $\mathbf p$:
\begin{subequations}
	\label{eq:1st+3rd}
	\begin{align}
		\label{eq:3rd}
		\{C^{(\alpha)},g\}&=2\omega\,G
		\\
		\label{eq:1st}
		\{C^{(\alpha)},V\}+\{V^{(\alpha)},G\}&=2\omega\,V
	\end{align}
\end{subequations}
The condition \eqref{eq:3rd} for $C(\mathbf p,\mathbf q)=C_{ij}(\mathbf 
q)p^ip^j$ is equivalent to $C_{ij}$ being a conformal Killing tensor.
The components of $\omega(\mathbf p,\mathbf q)=\omega^i(\mathbf q)p_i$ are 
given by a 1-form, also denoted by~$\omega$ in the following.
Compare \eqref{eq:1st+3rd} to the analogous 
equations~\eqref{eqn:proper.cubic.linear} for proper superintegrability.
\bigskip

The metric $g$ allows us to identify symmetric forms and endomorphisms by 
abuse of notation.
Interpreting a conformal Killing tensor in this way as an endomorphism on 
$1$-forms, Equation~\eqref{eq:1st} can be written in the form
\begin{equation}\label{eqn:dV.KdV}
	dV^{(\alpha)} = C^{(\alpha)}dV+\omega\,V\,.
\end{equation}
Its integrability condition is the \emph{Bertrand-Darboux 
condition}
\begin{subequations}\label{eqn:conformal.dKdV}
\begin{equation}\label{eqn:conformal.dKdV.abstract}
	d(C^{(\alpha)}dV)+V\,d\omega+dV\wedge\omega=0
\end{equation}
which in components reads
\begin{equation}\label{eqn:conformal.dKdV.ij}
	\young(i,j)
	\biggl( C\indices{^m_i}V_{,jm} + C\indices{^m_{i,j}}V_{,m}
	+\omega_iV_{,j} +\omega_{i,j}V  \biggr) = 0\,.
\end{equation}
\end{subequations}
%for a Killing tensor $C=C^{(\alpha)}$ in a conformally superintegrable 
%system. Condition~\eqref{eqn:dKdV:ij} is analogous 
%to~\eqref{eqn:Bertrand.Darboux}, which is its counterpart in proper 
%superintegrability. Note that in index free notation~\eqref{eqn:dKdV:ij} 
%becomes~\eqref{eqn:conformal.dKdV}.
This is the counterpart to condition~\eqref{eqn:Bertrand.Darboux} from proper 
superintegrability.

By virtue of the Bertrand-Darboux equation~\eqref{eqn:conformal.dKdV}, the 
potentials $V^{(\alpha)}$ for $\alpha\not=0$ are eliminated from our 
equations. As we are going to see in the following, for non-degenerate 
systems the remaining potential $V=V^{(0)}$ can be eliminated as well, 
leaving equations on the conformal Killing tensors~$C^{(\alpha)}$ alone.
In analogy to proper superintegrability we denote by $\mathcal F$ the linear 
space spanned by second order conformal integrals 
$F^{(\alpha)}=C^{(\alpha)}(\mathbf p,\mathbf p)+V^{(\alpha)}$ 
satisfying~\eqref{eq:integral}.

\begin{definition}\label{def:max.spaces}
	Let $H=g(\mathbf p,\mathbf p)+V$ with $\mathcal F=\langle 
	F^{(\alpha)}\rangle$. Analogously to 		
	Definition~\ref{def:proper.max.spaces} we introduce 
	first
	\begin{align*}
		\mathcal C
		&= \{ C : C(\mathbf p,\mathbf p)+W\in\mathcal F\ 
					\text{ for some $W$} \}
		\\
		\intertext{and then}
		\mathcal V^\text{max}
		&= \{ V : \eqref{eqn:conformal.dKdV}
					\text{ holds for every } C\in\mathcal C \}
		\\
		\mathcal C^\text{max}
		&= \{ C : \eqref{eqn:conformal.dKdV}
					\text{ holds for every } V\in\mathcal V^\text{max} \}\,.
	\end{align*}
\end{definition}

\begin{assume}\label{assumption:killing.space}~
	%as announced in Section~\ref{sec:main.result.higher.dimension}
	Unless otherwise stated, a conformally superintegrable Hamiltonian 
	will be considered together with all its conformal integrals 
	$F=C_{ij}p^ip^j+W$ where $C\in\mathcal C^\text{max}$ and where~$W$ 
	satisfies Equation~\eqref{eqn:dV.KdV}, i.e.
	\[
		dW = CdV+\omega\,V\,.
	\]
	This assumption is no restriction, and ensures a tidy exposition as 
	we do not need to specify the subspace $\mathcal C\subset\mathcal 
	C^\text{max}$ each time.
\end{assume}

%++++++++++++++++++++++++++++++++++++++++++++++++++++++++++++++++++++++++++++%
\subsection{Conformal equivalence}
\label{sec:conformal.equivalence}

In analogy to Section~\ref{sec:projective.V.space.proper} we obtain a 
symmetry group of conformally superintegrable systems on (pseudo-)Riemannian 
manifolds. For the time being, we insist that the metric is preserved (up to 
coordinate transformations). In contrast to properly superintegrable systems, 
the affine group does not map potentials to potentials for conformally 
superintegrable systems, since \eqref{eqn:conformal.integral} contains the 
potential $V$ without derivatives.
The symmetry group of a conformally superintegrable system on a fixed 
(pseudo-)Riemannian manifold therefore is $\mathfrak 
S=\mathrm{Diff}(M)\rtimes\R^*$ where $\Phi=(\phi,a)\in\mathfrak S$ acts as
\[
	\Phi(g^{ij}p_ip_j+V) = \phi^*(g)^{ij}p_ip_j+a\phi^*(V)\,.
\]
This is the counterpart of the symmetry 
group~\eqref{eqn:symmetry.group.proper} for properly superintegrable systems.
Analogously to~\eqref{eqn:symmetry.structure.tensor.proper} in 
Section~\ref{sec:projective.V.space.proper}, the symmetry 
group of the structure tensor $S_{ijk}$ of a conformally superintegrable 
system is $\mathfrak S'=\mathfrak S\times\R^*$.

In the current section, St\"ackel equivalence is going to be generalised to 
conformal equivalence of conformally superintegrable systems on 
(pseudo-)Riemannian geometries.
In a next step, we then define superintegrability on conformal geometries, 
and we shall see that with this definition, the symmetry group of the 
structure tensor coincides with the symmetry group of the superintegrable 
system.

Lemma~\ref{la:Stackel.equivalent.H} is the basis for the definition of 
St\"ackel equivalence of properly superintegrable Hamiltonians, see 
Definition~\ref{def:staeckel.equivalence}.
For conformally superintegrable systems, we have the following 
generalisation, compare for example 
\cite{Kress07,Kalnins&Kress&Miller-II,Kalnins&Kress&Miller-IV,BKM1986}.

\begin{definition}\label{def:conformal.equivalence}
 Consider two second order conformally superintegrable systems with 
 Hamiltonians $H=g(\mathbf p,\mathbf p)+V$ respectively $\tilde H=\tilde 
 g(\mathbf p, \mathbf p)+\tilde V$ and conformal integrals $F^{(\alpha)}$ 
 respectively $\tilde F^{(\alpha)}$ as in Definition~\ref{def:main.notions}.
 We say that the two systems are \emph{conformally equivalent} if the 
 associated Hamiltonians $H$ and $\tilde H$ satisfy $\tilde H=\Omega^{-2}H$ 
 for some function $\Omega$, and if the trace-free parts of the associated 
 conformal Killing tensors, viewed as $(2,0)$-tensors with upper indices, 
 span the same linear space.
\end{definition}

\noindent Compare this to Definition~\ref{def:staeckel.equivalence}. 
St\"ackel equivalence is a special case of conformal equivalence, insofar as 
in~\eqref{eq:staeckel.F} the conformal factor $\Omega$ is given by a 
potential $U\in\mathcal V^\text{max}$. Regarding the St\"ackel transformation 
of the integrals, Equation~\eqref{eq:staeckel.F}, we remark the following:
In \cite[Theorem 4.1.8]{Capel_phdthesis} it is proven that if $H=g(\mathbf 
p,\mathbf p)+V$ is a Hamiltonian with conformal integral $F=C(\mathbf 
p,\mathbf p)+W$, and $U\in\mathcal V^\text{max}$, then there is a 
trace correction $\lambda$ such that $\tilde H=\frac{H}{U}$ admits the proper 
integral $\tilde F=C(\mathbf p,\mathbf p)+\lambda\,g(\mathbf p,\mathbf 
p)-\frac{VW}{U}$.
This fact provides us with a way to transform a second order conformally 
superintegrable system into a properly superintegrable system, possibly on a 
different (pseudo-)Riemannian manifold.
We conclude that St\"ackel equivalence can indeed be understood as special 
case of conformal equivalence.\smallskip

In dimensions 2 and 3 it is even proven that any second order non-degenerate 
superintegrable system on a conformally flat manifold is St\"ackel equivalent 
to a properly superintegrable system on a manifold of constant curvature; see 
\cite[Theorem 3]{Kalnins&Kress&Miller-II} and \cite[Theorem 
4]{Kalnins&Kress&Miller-IV} respectively.
St\"ackel classes of 2-dimensional second order systems are studied in 
\cite{Kress07} using properties of their associated quadratic algebras.
\medskip

\noindent As mentioned earlier, we impose trace-freeness on conformal Killing 
tensors, except for the metric, which thereby becomes a distinguished 
conformal Killing tensor.
Trace-freeness is motivated, among other reasons, by the conformal 
transformation rules for conformal Killing tensors.
Let us assume that $H,\tilde H$ are two conformally equivalent Hamiltonians. 
Let $F(\mathbf q)=C_{\mathbf q}(\mathbf p,\mathbf p)+W(\mathbf q)$ be a 
conformal integral for $H$, i.e.
\[
	\lcb H,C(\mathbf p,\mathbf p)+W\rcb=\omega(\mathbf p)H\,.
\]
Then we compute
\begin{align}
	\nonumber
	\lcb \tilde H,F\rcb
	%\lcb \tilde H,C(\mathbf p,\mathbf p)+W\rcb
	&= \lcb \frac{H}{\Omega^2},C(\mathbf p,\mathbf p)+W\rcb
	\\ \nonumber
	&= \frac{1}{\Omega^2}\,\lcb H , C(\mathbf p,\mathbf p)+W\rcb
		-\frac{2}{\Omega^3}\,\lcb \Omega,C(\mathbf p,\mathbf p)\rcb\,H
	\\ \label{eqn:computation.F.invariant}
	&=	\big(
			\omega(\mathbf p) - 2C(\mathbf p,d\Upsilon)
		\big)\,\tilde H
\end{align}
with $\Upsilon=\ln\Omega$.
The following proposition is thus obtained, recalling 
Assumption~\ref{assumption:killing.space}, which ensures that the spaces 
$\mathcal F,\tilde{\mathcal F}$ in the claim are maximal.

\begin{proposition}\label{prop:trafo.conformal.Killing.and.integrals}
	Let $g$ and $\tilde g=\Omega^2g$ be a pair of conformally equivalent 
	metrics, $\Omega>0$.
	
	\noindent(i)
	If $C_{ij}$ is a trace-free conformal Killing tensor for~$g$ then $\tilde 
	C_{ij}=\Omega^4C_{ij}$ is a trace-free conformal Killing tensor for 
	$\tilde g$.
	
	\noindent(ii)
	If $(g,V,\mathcal F)$ and $(\tilde g,\tilde V,\tilde{\mathcal F})$ are 
	conformally equivalent second order conformally superintegrable systems, 
	where for $F=C(\mathbf p,\mathbf p)+W\in\mathcal F$ with $F\not\propto 
	H$, the coefficients $C_{ij}$ are the components of a \emph{trace-free} 
	conformal Killing tensor. Then $\tilde{\mathcal F}=\mathcal F$.
\end{proposition}
\noindent Note that $\tilde C_{ij}$ is trace-free with respect to $\tilde g$, 
but also with respect to $g$, since $g$ and $\tilde g$ are conformally 
equivalent.
\begin{proof}
	For part (i), we have the equation 
	${\text{\tiny\young(ijk)}(C_{ij,k}-g_{ij}\omega_k)=0}$ for the conformal 
	Killing tensor~$C$.
	A straightforward computation then shows that
	\[
		\young(ijk)\,\left(
						\tilde C_{ij,k}
						-\tilde g_{ij}\tilde\omega_k
					\right)=0
	\]
	where
	\begin{equation*}
		\tilde C_{ij} := \Omega^4C_{ij}\,, \qquad
		\tilde\omega_i := \Omega^2\,(\omega_i-2C_{ia}\Upsilon^{,a})\,,
	\end{equation*}
	and $\Upsilon=\ln\Omega$.
	Part~(ii) then follows immediately 
	from~\eqref{eqn:computation.F.invariant} in the light of 
	Assumption~\ref{assumption:killing.space}.
\end{proof}

Proposition~\ref{prop:trafo.conformal.Killing.and.integrals} yields the 
transformation rules under conformal equivalence: The Hamiltonian is 
conformally modified, but the space of integrals is preserved.
We can encode this in an efficient manner using weighted tensor densities.
A conformal density $\delta$ of weight $w$ is a section in 
$\mathcal{E}[w]:=S^2T^*M\otimes(\Lambda^nTM)^{-w/n}$ such that 
$\phi^*(\delta)=\Omega^w\delta$ if $\phi\in\mathrm{Conf}(M)$ is a conformal 
transformation, i.e.\ $\phi^*(g)=\Omega^2g$.
Given the data $(g,V,\mathcal F)$, we observe that 
$g\in\mathcal{E}_{(0,2)}[2]$ and 
$V\in\mathcal{E}[-2]$ are weighted densities. We are thus able to define 
conformally invariant densities of weight $0$,
$\mathbf g\in\mathcal{E}_{(0,2)}[0]$ and
$\mathbf v\in\mathcal{E}[0]=\mathcal{C}^\infty(M)$,
given in local components by
\[
	\mathbf{g}_{ij} = \frac{g_{ij}}{|\det(g)|^{\tfrac1n}}\,,
\]
compare~\cite{curry_2014}, and
\[
	\mathbf{v} = |\det(g)|^{\tfrac1n}V\,,
\]
respectively.
One straightforwardly verifies that $\phi^*(\mathbf{g})=\mathbf{g}$ and 
$\phi^*(\mathbf{v})=\mathbf{v}$.
This leads us to the following definition:
\begin{definition}\label{def:c-superintegrable}
	Let $M$ be a conformal manifold.
	A \emph{second order c-superintegrable system} on $M$ is given by a 
	conformally invariant function $\mathbf{v}$ and a maximal linear space 
	$\mathcal F$ of invariant scalar functions $F:T^*M\to\R$ on $M$ such that
	\begin{enumerate}
		\item If $F\in\mathcal F$, then $F=C(\mathbf q)^{ij}p_ip_j+W(\mathbf 
		q)$ where $C^{ij}$ are the components of a trace-free conformal 
		Killing tensor.
		\item There is a density $\Omega\in\mathcal{E}[1]$ of weight~1 such 
		that $V=\Omega^2\mathbf{v}$ satisfies the Bertrand-Darboux 
		condition~\eqref{eqn:conformal.dKdV} for all $F\in\mathcal F$.
	\end{enumerate}
\end{definition}

\noindent Note that the definition is conformally invariant, and for any 
$\Omega\in\mathcal{E}[1]$ we have that 
$(\Omega^{-2}\mathbf{g},\Omega^{2}\mathbf{v},\mathcal F)$ is a conformally 
superintegrable system.

\begin{remark}\label{rmk:symmetry.group.cSIS}
	Let us consider the symmetry group of c-superintegrable systems.
	It is given by $\mathfrak S
	=\mathrm{Conf}(M)\rtimes\R^*$, where $\R^*$ acts as
	\[
		(\mathbf{v},\mathcal F)\mapsto\lb a\mathbf{v},\mathcal F\rb
	\]
	It contains, in particular, all diffeomorphisms, and the transformations 
	having a constant conformal factor. We remark that two different 
	conformal 
	factors do not necessarily result in different geometries. For instance, 
	if $g$ is a Euclidean metric in dimension two, then any conformally 
	equivalent metric~$\Omega^2g$ with
	\[
			\lb\D{\Omega}{x}\rb^2
			+ \lb\D{\Omega}{y}\rb^2
			=
			\Omega\D{^2\Omega}{x^2}
			+ \Omega\D{^2\Omega}{y^2}
	\]
	will also be Euclidean.
\end{remark}

%++++++++++++++++++++++++++++++++++++++++++++++++++++++++++++++++++++++++++++%
\subsection{Structure tensors of a conformally superintegrable system}

We consider the Bertrand-Darboux equation~\eqref{eqn:conformal.dKdV}. It is a 
compatibility condition for 
the potential and the space of conformal Killing tensors associated to a 
second order superintegrable systems.
Our aim is to solve the overdetermined system of linear 
equations~\eqref{eqn:conformal.dKdV} for the highest derivatives of 
the potential $V$.
Following the analogous discussion in~\cite{Kress&Schoebel&Vollmer}, we use a 
generalisation of Cramer's rule.

\begin{definition}
	On an inner product space, the \emph{Gram Coefficients} $G_k(A)$ of a 
	linear map $A$ are the coefficients of the polynomial
	\[
	\det(1+tAA^*)=\sum_{k=0}^\infty G_k(A)t^k\,,
	\]
	where $A^*$ is the adjoint of $A$.
\end{definition}

Up to sign and order, the $G_k(A)$ are the coefficients of the 
characteristic polynomial of $AA^*$.
The following proposition provides us with an explicit expression for the 
structure tensor, which we will then apply to irreducible systems.

\begin{proposition}\cite{DTGVL}
	\label{prop:Moore-Penrose}
	A linear map~$A$ on an inner product space has rank~$r$ if and only if
	\begin{equation}
		\label{eq:rank}
		G_r(A)\not=0=G_{r+1}(A).
	\end{equation}
	In this case, the system of linear equations
	\[
	Ax=b
	\]
	has a solution $x$ if and only if the augmented matrix $(A|b)$ satisfies
	\[
		G_{r+1}(A|b)=0.
	\]
	Moreover, the minimal norm solution is given by
	\[
	x=A^\dagger b,
	\]
	where
	\begin{equation}
		\label{eq:Moore-Penrose}
		A^\dagger=\frac1{G_r(A)}\sum_{k=1}^rG_{r-k}(A)(-A^*A)^{k-1}A^*.
	\end{equation}
	is the \emph{Moore-Penrose inverse} of $A$.
\end{proposition}

\noindent We consider the Bertrand-Darboux 
equation~\eqref{eqn:conformal.dKdV}, when 
written in local coordinates, as a linear system
\begin{equation}\label{eq:Ax=b1y+b0V}
	AX=b_1(dV)+b_0V,
\end{equation}
where $X$ is a vector that contains the unknown components of the trace-free 
Hessian
\[
	\mathring\nabla^2_{ij}V = \nabla^2_{,ij}V-\frac1n\,g_{ij}\Delta V.
\]
The coefficient matrix $A$ contains the components of the Killing tensors 
$C^{(\alpha)}$,\ \ $\alpha=0,1,\ldots,2(n-1)$.
On the right hand side, $b_1$ is a vector of terms involving the components 
of the differential $dV$ of~$V$, i.e.\ $b_1(dV)$ comprises the components of 
the second and third term in the Bertrand-Darboux equation for each 
$C^{(\alpha)}$.
Likewise, $b_0$ is a column vector such that $b_0V$ contains the components 
of the fourth term in~\eqref{eqn:conformal.dKdV.ij}.
Note that the rank~$r$ for~\eqref{eq:Ax=b1y+b0V} does not need to be constant.
If the conformal Killing tensors are analytic, however, then so are the 
components of the matrix~$A$. Consequently the Gram coefficients $G_k(A)$ are 
also analytic.
Proposition~\ref{prop:Moore-Penrose} ensures that in such case the rank of 
$A$ is constant on an open and dense subset.

Provided $r$ is constant, we can express the trace-free Hessian of $V$ using 
the Moore-Penrose inverse, by writing
\begin{equation}\label{eq:Ax=b1y+b0V.solution}
	X = A^\dagger b_1(dV)+A^\dagger b_0V.
\end{equation}

\begin{definition}
	A conformally superintegrable system on a (pseudo-)Riemannian manifold 
	$M$ has \emph{rank} $r$, if the coefficient matrix $A$ in
	\eqref{eq:Ax=b1y+b0V} has rank $r$ on an open and dense subset of $M$.
\end{definition}

\begin{remark}
	The rank of a conformally superintegrable system is at most
	\begin{equation}\label{eq:rank:max}
		r_\text{max}=\frac{n(n+1)}2-1=\frac{(n-1)(n+2)}2.
	\end{equation}
	As in the case of properly superintegrable systems 
	\cite{Kress&Schoebel&Vollmer}, we can characterise systems of maximal 
	rank in terms of their trace-free conformal Killing tensors.
	As before, the metric allows us to identify	(trace-free) bilinear forms 
	and (trace-free) endomorphisms on the tangent space. This 
	identification will tacitly be made in the following.
	
	Due to Proposition~\ref{prop:trafo.conformal.Killing.and.integrals}, any 
	conformally superintegrable system that is conformally equivalent 
	to a maximal rank conformally superintegrable system is itself of 
	maximal rank.
\end{remark}

\begin{definition}\label{def:irreducible}~
	\begin{enumerate}
		\item
		A set of endomorphisms is \emph{irreducible} if they do not have a
		non-trivial invariant subspace in common.
		\item
		A set of endomorphism fields on a (pseudo-)Riemannian manifold $M$ is
		called \emph{irreducible}, if they are pointwise irreducible
		on an open and dense subset of~$M$.
		\item
		We call a conformally superintegrable system \emph{irreducible}, if 
		its conformal Killing tensors form an irreducible set.
		\item
		We call a c-superintegrable system \emph{irreducible}, if 
		its conformal Killing tensors form an irreducible set.
	\end{enumerate}
\end{definition}

\noindent The next lemma follows analogously to the corresponding statement 
in the case of properly superintegrable systems \cite{Kress&Schoebel&Vollmer}.
\begin{lemma} %[\cite{Kress&Schoebel&Vollmer}]
	\label{lemma:irreducible}
	A conformally superintegrable system has maximal rank if and only if it 
	is irreducible.
\end{lemma}

\noindent Irreducibility thus ensures that we can solve~\eqref{eq:Ax=b1y+b0V} 
for $X$. In particular we find the minimal-norm 
solution~\eqref{eq:Ax=b1y+b0V.solution}, to which we may add any element from 
the kernel of $A$.
For second order conformally superintegrable systems, the Bertrand-Darboux 
equation~\eqref{eqn:conformal.dKdV} can therefore be solved assuming 
irreducibility. 
We have the requirement that the trace of the Hessian of $V$ is the Laplacian 
of $V$ and thus the potential $V$ of an irreducible conformally 
superintegrable system satisfies
\begin{equation}
	\label{eqn:Wilczynski.conformal}
	V_{,ij}=T\indices{_{ij}^m}V_{,m}+\tfrac1ng_{ij}\Delta V+\tau_{ij}V
\end{equation}
with a (not necessarily unique) $(2,1)$-tensor $T_{ijk}$ and a $(2,0)$-tensor 
$\tau_{ij}$.
We refer to~\eqref{eqn:Wilczynski.conformal} as \emph{Wilczynski equation} 
because our methods are inspired by Wilczynski's series of papers on the 
projective differential geometry of 
surfaces~\cite{Wilczynski_I,Wilczynski_IV}.
Equations similar to \eqref{eqn:Wilczynski.conformal} appear in
\cite{Kalnins&Kress&Miller-III} in local coordinates and for dimension three.
The tensors $T$ and $\tau$ necessarily satisfy the following symmetries
\begin{subequations}\label{eqn:relations.T.tau}
	\begin{align}
	\young(i,j)\,\lb T\indices{_{ij}^m}V_{,m} +\tau_{ij}V \rb &= 0
	\\
	g^{ij}\,\lb T\indices{_{ij}^m}V_{,m} +\tau_{ij}V \rb &= 0\,.
	\end{align}
\end{subequations}
We call $T_{ijk}$ the \emph{primary structure tensor} and $\tau_{ij}$ the 
\emph{secondary structure tensor} of the conformally superintegrable system. 
Note that these tensors are not invariant under conformal transformations.

The analog of~\eqref{eqn:Wilczynski.conformal} in proper superintegrability 
is Equation~\eqref{eqn:Wilczynski.proper}, which formally 
coincides with~\eqref{eqn:Wilczynski.conformal} for $\tau_{ij}=0$. However, 
note that~\eqref{eqn:Wilczynski.proper} was obtained from 
\eqref{eqn:Bertrand.Darboux}, where $K$ is a proper Killing tensor. 
Instead, the Wilczynski equation~\eqref{eqn:Wilczynski.conformal} is obtained 
from the Bertrand-Darboux equation~\eqref{eqn:conformal.dKdV}, where 
trace-free conformal Killing tensors appear.
In spite of this difference, the following lemma shows that 
vanishing $\tau$ indeed follows from proper superintegrability. We are going 
to see below in Corollary~\ref{cor:tau0.then.proper} that, for non-degenerate 
second order conformally superintegrable systems, the converse holds as well.
\begin{lemma}\label{la:proper.then.tau0}
 Consider a second order superintegrable system with potential $V$ and 
 associated proper Killing tensors $K^{(\alpha)}$. Let 
 $C^{(\alpha)}=\mathring K^{(\alpha)}$ and assume they 
 satisfy the Wilczynski equation~\eqref{eqn:Wilczynski.conformal} with $V$. 
 Then $\tau_{ij}=0$.
\end{lemma}
\begin{proof}
 We choose a specific $\alpha$ and suppress the corresponding superscript for 
 conciseness.
 By hypothesis, there is a function $\lambda$ such that 
 \begin{equation}\label{eqn:K=C+lambda.g}
 	K_{ij}=C_{ij}+\frac1n\lambda g_{ij}
 \end{equation}
 satisfies~\eqref{eqn:Bertrand.Darboux}.
 
 %If $K$ is trace-free, then $\lambda=0$ and there is nothing to prove.
 %Thus assume $\lambda\ne0$.
 The proper Killing tensor $K_{ij}$ satisfies the Killing equation 
 ${\young(ijk)}K_{ij,k}=0$. 
 Substituting~\eqref{eqn:K=C+lambda.g} into the Killing equation, and then 
 using the conformal Killing equation 
 ${\young(ijk)}(C_{ij,k}-g_{ij}\omega_k)=0$, we find
 $\lambda_{,k}=-n\omega_k$.
 We conclude that $\omega$ is exact, $d\omega=0$.
% Next, recall~\eqref{eqn:omega.from.divC}, from which we infer 
%$\omega_k=\frac{2}{n+2}\,C\indices{^a_{k,a}}$.
 It then follows that $\tau_{ij}=0$, as~\eqref{eqn:conformal.dKdV} does not 
 have a term involving $V$ without derivative.
\end{proof}

%++++++++++++++++++++++++++++++++++++++++++++++++++++++++++++++++++++++++++++%
\subsection{Non-degenerate and abundant systems}

In the previous section, it was shown that for irreducible second order 
superintegrable systems, the Bertrand-Darboux 
equation~\eqref{eqn:conformal.dKdV} can be solved for the second derivatives 
of $V$ up to the Laplacian $\Delta 
V$.
The Wilczynski equation~\eqref{eqn:Wilczynski.conformal} then allows one to 
express all higher covariant derivatives of $V$ linearly in $V$, $\nabla V$ 
and $\Delta V$.  
Hence all higher derivatives of $V$ at some point are determined by the value 
of $(V,\nabla V,\Delta V)$ at this point, i.e. by $n+2$ constants.  This 
motivates the following definition.

\begin{definition}
	\label{def:non-degenerate}
	A conformally superintegrable system is called \emph{non-degenerate} if 
	it satisfies the Wilczynski condition~\eqref{eqn:Wilczynski.conformal}, 
	and if \eqref{eqn:Wilczynski.conformal} admits an $(n+2)$-dimensional 
	space of solutions~$V$.\footnote{Note that for an analytic 
	metric, the (trace-free) conformal Killing tensors are analytic, and thus 
	the structure tensors and the potentials are also analytic.}
\end{definition}

\noindent
Due to~\eqref{eqn:relations.T.tau}, the structure tensors satisfy the 
following symmetry conditions for a non-degenerate potential:
\begin{align*}
	T\indices{_{ji}^m}&=T\indices{_{ij}^m}
	&
	g^{ij}T\indices{_{ij}^m}&=0
	\\
	\tau_{ij}&=\tau_{ji}
	&
	g^{ij}\tau_{ij}&=0\,.
\end{align*}

\begin{lemma}
	For a non-degenerate conformally superintegrable system the structure 
	tensors $T_{ijk}$ and $\tau_{ij}$ are unique.
\end{lemma}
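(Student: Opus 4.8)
The plan is to show that the data $(T_{ijk},\tau_{ij})$ in the Wilczynski equation~\eqref{eqn:Wilczynski.conformal} is determined by the potential space alone, using non-degeneracy, i.e.\ the fact that $V$, $\nabla V$ and $\Delta V$ can be prescribed independently at a point. Concretely, I would argue by a dimension count: the map sending a solution $V$ to the jet $(V,V_{,k},\Delta V)$ at a fixed point $p$ is, by Definition~\ref{def:non-degenerate}, a linear isomorphism onto $\mathbb{R}^{n+2}$; in particular the values $(V(p),V_{,k}(p))$ together with $\Delta V(p)$ realise every element of $\mathbb{R}\oplus\mathbb{R}^n\oplus\mathbb{R}$.

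First I would suppose that there are two pairs $(T,\tau)$ and $(T',\tau')$, both symmetric and tracefree in the first two indices as required, such that every non-degenerate solution $V$ satisfies~\eqref{eqn:Wilczynski.conformal} with either pair. Subtracting the two instances of~\eqref{eqn:Wilczynski.conformal} gives, for every solution $V$ and at every point,
\[
 0=(T\indices{_{ij}^m}-T'\indices{_{ij}^m})\,V_{,m}+(\tau_{ij}-\tau'_{ij})\,V,
\]
where the Laplacian terms cancel because both equations carry the same coefficient $\tfrac1n g_{ij}\Delta V$. Write $\Delta T\indices{_{ij}^m}=T\indices{_{ij}^m}-T'\indices{_{ij}^m}$ and $\Delta\tau_{ij}=\tau_{ij}-\tau'_{ij}$. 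Fix a point $p$. By non-degeneracy I can choose a solution $V$ with $V(p)=1$ and $V_{,m}(p)=0$ for all $m$ (such a jet is attained, and $\Delta V(p)$ is then whatever the isomorphism forces, but that value is irrelevant since the Laplacian terms have already cancelled); this forces $\Delta\tau_{ij}(p)=0$. Next, for each index value $a$, I choose a solution with $V(p)=0$ and $V_{,m}(p)=\delta^a_m$; then the identity yields $\Delta T\indices{_{ij}^a}(p)=0$. Since $p$ was arbitrary, $\Delta T=0$ and $\Delta\tau=0$ identically, which is the claim.

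The one subtlety — and the only place any care is needed — is the legitimacy of the point-by-point choice of solutions: non-degeneracy is a statement about the dimension of the global (or local) solution space, and I must know that the jet-evaluation map at $p$ is actually surjective onto the $(n+2)$-dimensional space of $(V,\nabla V,\Delta V)$-data, not merely injective. This is exactly the content recorded just before Definition~\ref{def:non-degenerate}: \eqref{eqn:Wilczynski.conformal} is a prolongation-closed (Frobenius-type) system, so all higher derivatives of $V$ are expressed linearly through $(V,\nabla V,\Delta V)$, whence the solution space injects into $\mathbb{R}^{n+2}$; non-degeneracy says this injection is onto, so every $(n+2)$-jet of the stated form is realised by an honest solution. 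With that in hand the argument above is immediate and requires no computation, and since the split~\eqref{eq:1st+3rd} of the superintegrability condition shows the potential of a non-degenerate Wilczynski system is genuinely $(n+2)$-parameter, Lemma applies to every such system.
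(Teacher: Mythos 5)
Your proof is correct and follows essentially the same route as the paper: subtract the two instances of the Wilczynski equation (the Laplacian terms cancel) and use non-degeneracy to conclude that the coefficients of $V_{,m}$ and $V$ vanish separately. The extra care you take in justifying that the jet-evaluation map $V\mapsto(V,\nabla V,\Delta V)$ at a point is surjective — via the finite-type prolongation plus the dimension count — is exactly the content the paper leaves implicit in the phrase ``each coefficient has to vanish independently,'' so your write-up is a slightly more detailed version of the same argument.
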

\begin{proof}
	Assume that the Wilczynski condition~\eqref{eqn:Wilczynski.conformal} 
	were satisfied for two different pairs of structure tensors, say 
	$T_{ijk},\tau_{ij}$ and $\tilde T_{ijk},\tilde\tau_{ij}$ respectively.
	Then, subtracting the corresponding copies 
	of~\eqref{eqn:Wilczynski.conformal},
	\[
	0 = ( T\indices{_{ij}^k}-\tilde T\indices{_{ij}^k} )\,V_{,k}
	+ ( \tau_{ij}-\tilde\tau_{ij} )\,V\,.
	\]
	By the hypothesis of non-degeneracy, the coefficients of $V_{,k}$ and $V$ 
	have to vanish independently, i.e.\ $T\indices{_{ij}^k} = \tilde 
	T\indices{_{ij}^k}$ and $\tau_{ij} = \tilde\tau_{ij}$.
\end{proof}

\begin{example}
	\label{ex:IHO}
	The isotropic harmonic oscillator is an irreducible system in the sense 
	of Definition~\ref{def:irreducible} and has the potentials
	\[
		V(\mathbf x)=\frac{\omega^2}2(\mathbf x-\mathbf x_0)^2+V_0
	\]
	with the $n+2$ free parameters $\omega^2$, $\mathbf x_0$ and $V_0$. 
	$V(\mathbf x)$ solves the Wilczynski 
	equation~\eqref{eqn:Wilczynski.conformal}.
	The \emph{isotropic harmonic oscillator} on flat $n$-dimensional space 
	has vanishing structure tensor~$T$. It is properly superintegrable and 
	therefore also the structure tensor $\tau_{ij}$ vanishes.
	Any conformally superintegrable system conformally equivalent to the 
	isotropic harmonic oscillator is characterised by $\mathring T_{ijk}=0$, 
	and we obtain~\eqref{eqn:Wilczynski.conformal} in the form
	\begin{equation}
		V_{,ij}=\frac{
					n\,t_i V_{,j}
					+ n\,t_j V_{,i}
					- 2\,g_{ij} t^m V_{,m}
				}{(n-1)(n+2)}
				+ \frac1n\,g_{ij}\Delta V+\tau_{ij}V\,.
	\end{equation}
	We are going to see in \eqref{eqn:tau} that
	\[
		\tau_{ij}
		= \frac23\,\left(\frac{n}{(n-1)(n+2)}\right)^2\,(t_i t_j)_\circ
		- 2\,\mathring{\schouten}_{ij},
	\]
	where $\mathring\schouten_{ij}$ is the trace-free part of the Schouten 
	tensor.
\end{example}

Note that non-degeneracy is conformally invariant, and thus can be defined 
for a c-superintegrable system.
\begin{definition}
	We call a c-superintegrable system \emph{non-degenerate} if one (and 
	hence all) members of the corresponding equivalence class are 
	non-degenerate in the sense of Definition~\ref{def:non-degenerate}.
\end{definition}

\noindent From now on we restrict to non-degenerate systems that 
satisfy the Wilczynski equation~\eqref{eqn:Wilczynski.conformal}.
In the present section, our aim is to formulate and study the integrability 
conditions imposed by \eqref{eqn:Wilczynski.conformal} and the non-degeneracy 
condition onto the structure tensors of a conformally superintegrable system.

By definition, a superintegrable system on an $n$-dimensional 
(pseudo-)Riemannian manifold has the maximal number of $2n-1$ 
\emph{functionally} independent integrals.
However, as for properly superintegrable systems, all known non-degenerate 
second order systems also admit maximally many \emph{linearly} independent 
conformal Killing tensors.
In analogy to properly superintegrable systems, we therefore 
define:
\begin{definition}\label{def:abundant.system}
We call a non-degenerate second order conformal superintegrable system in
dimension~$n$ \emph{abundant}, if the subspace
\[
	\mathring{\mathcal C}=\{\mathring{C} : C\in\mathcal C^\text{max} : 
	\mathrm{tr}(C)=0 \}
\]
has dimension
\[
	\dim(\mathring{\mathcal C})
	= \tfrac{n(n+1)}{2}-1
	= \frac{(n-1)(n+2)}{2} = r_\text{max}.
\]
\end{definition}

\noindent By virtue of 
Proposition~\ref{prop:trafo.conformal.Killing.and.integrals}, 
abundantness is a conformally invariant property and hence a property of a 
c-superintegrable system.
Moreover, it is manifest that if a conformally superintegrable system is 
abundant, then a properly superintegrable system that is conformally 
equivalent to it is also abundant in the sense 
of~\cite{Kress&Schoebel&Vollmer}.

Definition~\ref{def:abundant.system} is going to become natural in 
Section~\ref{sec:abundant}, where we find that it is tantamount with the 
generic integrability of the prolongation equations for the trace-free 
conformal Killing tensors arising from a second order conformally 
superintegrable system. At the end of the current section we shall also give 
an equivalent characterisation of abundantness relying on a relaxation of 
linear independence.

We remark that Definition~\ref{def:abundant.system} generalises the concept 
of abundantness introduced in Section~\ref{sec:preliminaries}.
Abundantness is trivial in dimension $n=2$.
In dimension $n=3$, the situation is a bit more involved. It is still true 
that every non-degenerate second order (properly) superintegrable system on a 
conformally flat manifold extends from $2n-1=5$ functionally independent 
integrals to $\tfrac{n(n+1)}{2}=6$ linearly independent integrals 
\cite{Kalnins&Kress&Miller-III}. This is known as the 
``$(5\Rightarrow6)$-Lemma''. Furthermore, every 3-dimensional conformal 
system is St\"ackel equivalent to a proper one with constant curvature 
\cite{Capel_phdthesis}.

\begin{remark}
	For the sake of expositional simplicity, we require an abundant system to 
	have $\tfrac{n(n+1)}{2}-1$ \emph{linearly} independent trace-free 
	conformal Killing tensors. Functional independence of $2n-1$ of the 
	arising conformal integrals is not yet required in the definition, but in 
	Lemma~\ref{la:functional.dependence} we prove that systems 
	which do admit $2n-1$ \emph{functionally} independent conformal integrals 
	lie dense among abundant systems.
\end{remark}

We devote the remainder of this paragraph to an alternative characterisation 
of abundantness.
% that is equivalent to the definition, but formally may be 
%regarded as more pleasing as it is a condition on the entire space $\mathcal 
%C^\text{max}$.

\begin{definition}
 We say that a collection of linearly independent Killing tensors 
 $K^{(\alpha)}_{ij}$ is \emph{conformally linearly independent} if
 \begin{equation}\label{eqn:independence.criterion}
   \sum_\alpha c_\alpha K^{(\alpha)} = f\,g\,,
 \end{equation}
 with constants $c_\alpha$ and a function $f$, implies $f=0$ and $c_\alpha=0$ 
 for all $\alpha$.
\end{definition}

The following lemma ensures that a conformally superintegrable system is 
abundant if the conformal Killing tensors $C^{(\alpha)}$ for 
$\alpha\in\{1,2,\dots,\tfrac{n(n+1)}{2}\}$ in 
Definition~\ref{def:main.notions} 
are conformally linearly independent.

\begin{lemma}
 Let $(K^{(1)},\dots,K^{(m)})$ be a collection of Killing tensors 
 and denote the corresponding trace-free conformal Killing 
 tensors by $C^{(\alpha)}=K^{(\alpha)}-\frac1n\,\mathrm{tr}(K^{(\alpha)})\,g$.
 Then the tupel $(g,C^{(1)},\dots,C^{(m)})$ is linearly independent if any 
 only if 
 $(K^{(1)},\dots,K^{(m)})$ is conformally linearly independent.
\end{lemma}
\begin{proof}
 Assume first that $(g,C^{(1)},\dots,C^{(m)})$ is linearly dependent. This 
 means there is a combination
 \[
  \sum c_\alpha C^{(\alpha)} = c_0\,g
 \]
 with constants $(c_0,c_1,\dots,c_m)\ne0$.
 This means
 \[
  \sum c_\alpha K^{(\alpha)} = f\,g
 \]
 for a function $f$ obtained from $c_0$ and the trace terms. This proves one 
 implication.
 For the other implication, we assume that $(K^{(1)},\dots,K^{(m)})$ are 
 conformally linearly dependent. Therefore we have
 \[
 	\sum_{\alpha=1}^m c_{(\alpha)}C^{(\alpha)}
 	= \left(
 			f+\frac1n\,\sum c_{(\alpha)}\mathrm{tr}(K^{(\alpha)})
 	 \right)\,g\,.
 \]
 In this equation the left hand side and the right hand side have to vanish 
 independently, as they are trace-free respectively pure trace.
 We conclude
 \[
 	\sum c_{\alpha}C^{(\alpha)} = 0\,,\qquad
 	f=-\frac1n\,\sum c_{\alpha}\mathrm{tr}(K^{(\alpha)})
 \]
 Because of the hypothesis of conformal linear dependence, we have 
 $(c_1,\dots,c_m)\ne0$. This implies that $(g,C^{(1)},\dots,C^{(m)})$ are 
 linearly dependent.
\end{proof}

Moreover we observe: If the integrals 
$F^{(\alpha)}=C^{ij}_{(\alpha)}p_ip_j+W^{(\alpha)}\in\mathcal F$ are 
functionally independent, then their kinetic parts are associated to 
conformally linearly independent conformal Killing tensors.
This follows from Theorem~1 of~\cite{Kalnins&Kress&Miller-III}.

%Indeed, if $\phi(F^{(0)},\dots,F^{(\alpha)})=0$ for a non-constant 
%functional 
%$\phi$, then in particular
%\[
%	\sum \lambda_\alpha\frac{F^{(\alpha)}}{\partial\mathbf p}
%	= 2\sum \lambda_\alpha C_{(\alpha)}(\mathbf p)
%	=0
%\]
%where $C_{(\alpha)}(\mathbf p)(X)=C_{(\alpha)}(\mathbf p,X)$.
%From functional independence of $(F^{(\alpha)}$ it follows that
%$\lambda_\alpha(\mathbf q,\mathbf p)=\frac{\partial\phi}{\partial 
%F^{(\alpha)}}(F^{(\alpha)}(\mathbf q,\mathbf p))=0$.
%
%Assume now that for the $C_{(\alpha)}$ are not conformally linearly 
%dependent, then there are constants $c_{\alpha}$, with $c_{\alpha}\ne0$ for 
%at least one $\alpha$, such that
%\[
%	\sum c_{\alpha}C_{(\alpha)}(\mathbf p) = f(\mathbf q)\,g(\mathbf p)\,.
%\]
%Now take $\phi$ such that $\frac{\partial\phi}{\partial F^{(\alpha)}} = 
%c_\alpha$ in $(\mathbf q_0,\mathbf p_0)$. It follows that 
%$\lambda_\alpha\ne0$ for some $\alpha$, which is the desired contradiction.

%----------------------------------------------------------------------------%
\subsection{Structure tensors and c-superintegrability}
We are now going to determine how the structure tensors behave under 
conformal changes of the superintegrable system.
\begin{lemma}\label{la:Stackel.transformation.rules}
Let $H=g(\mathbf p,\mathbf p)+V$ and $\tilde{H}=\Omega^{-2}\,H$ be a pair of 
conformally equivalent Hamiltonians, $\Omega>0$.
Assume $H$ gives rise to an irreducible conformally superintegrable system 
such that the Wilczynski equation~\eqref{eqn:Wilczynski.conformal} is 
satisfied.
Then $\tilde{H}$ satisfies~\eqref{eqn:Wilczynski.conformal} as well. (We 
decorate the corresponding objects with a tilde).
In particular, the structure tensors $T_{ijk}$ and $\tau_{ij}$ are 
transformed to, respectively,
\begin{subequations}\label{eqn:transformation.rules}
\begin{align}
 \tilde{T}\indices{_{ij}^k}
 &= T\indices{_{ij}^k}-3\,{\young(ij)}_\circ \Upsilon_{,i} g\indices{_j^k}
 \label{eqn:transformation.T}
 \\
 \tilde{\tau}_{ij}
 &= \tau_{ij} + 2\,T\indices{_{ij}^k}\Upsilon_{,k}
    -{\young(ij)}_\circ \left( \Upsilon_{,ij} +2\Upsilon_{,i}\Upsilon_{,j} \right)
 \label{eqn:transformation.tau}\,,
\end{align}
\end{subequations}
where $\Upsilon=\ln\Omega$.
\end{lemma}
\begin{proof}
By a straightforward computation, using the Wilczynski 
equation~\eqref{eqn:Wilczynski.conformal} and the product rule, we arrive at
\begin{align*}
 \hat{\nabla}^2_{ij}\hat{V}
 &=\frac1n\,\hat{g}_{ij}\hat{\Delta}\hat{V}
    + \left(
 		T\indices{_{ij}^k}
 		-3\,\left(\Upsilon_i\,g_j^k +\Upsilon_j\,g_i^k \right)
 		-\tfrac6n\,g_{ij}\,\Upsilon^{,k}
      \right)\,\hat{V}_k \\
 &\quad
 +\left( \tau_{ij} +2\,T\indices{_{ij}^k}\,\Upsilon_{,k}
		-2\,\tfrac{ \nabla^2_{ij}\Omega }{\Omega}
		+\tfrac2n\,g_{ij}\,\tfrac{\Delta\Omega}{\Omega}
		-2\,\Upsilon_{,i}\Upsilon_{,j}
		+\tfrac2n\,g_{ij}\,\Upsilon^{,a}\Upsilon_{,a}
 		\right)\,\hat{V}\,.
\end{align*}
The result then simplifies further using
$\Omega^{-1}\nabla^2_{ij}\Omega = \Upsilon_{,i}\Upsilon_{,j} +\frac12\,\young(ij)\Upsilon_{,ij}$.
\end{proof}

\noindent According to the Wilczynski 
equation~\eqref{eqn:Wilczynski.conformal}, the 
structure tensor $T_{ijk}$ of a conformally superintegrable system determines 
the conformally superintegrable potential up to the action of 
$\mathfrak{S}=\mathrm{Diff}(M)\rtimes\R^*$.
The following corollary is straightforwardly obtained, but will be 
fundamental.

\begin{corollary}
Let the hypothesis and notation be as in 
Lemma~\ref{la:Stackel.transformation.rules}.

\noindent (i) Under conformal transformations, the trace $t_i = 
T\indices{_{ia}^a}=T\indices{_{ai}^a}$ undergoes a translation by 
$\Upsilon_{,i}$,
\begin{equation}\label{eqn:trafo.t.i}
	\tilde t_i
	= t_i - \frac3n\,(n-1)(n+2)\,\Upsilon_{,i}\,.
\end{equation}

\noindent (ii) Under conformal transformations, the trace 
$T\indices{^a_{ai}}$ remains unchanged,
\begin{equation*} %\label{eqn:trafo.xi.i}
	\tilde T\indices{^a_{ai}} = T\indices{^a_{ai}}\,.
\end{equation*}

\noindent (iii) Under conformal transformations, the trace-free part of the 
primary structure tensor remains unchanged,
\begin{equation}\label{eqn:trafo.S.ijk}
	\mathring{\tilde{T}}\indices{_{ij}^k} = \mathring T\indices{_{ij}^k}\,.
\end{equation}
\end{corollary}

\begin{remark}
	Although $\mathring T\indices{_{ij}^k}$ is conformally invariant, it is 
	often advantageous to work with the tensor $\mathring T_{ijk}$ instead, 
	which in the context below turns out to be a totally symmetric tensor 
	field.
	While $\mathring T_{ijk}$ is not actually invariant, under conformal 
	transformations it is just multiplied with a power of $\Omega$,
	\[
		\mathring{\tilde{T}}_{ijk}
		= \Omega^2 g_{ka}\mathring{\tilde{T}}\indices{_{ij}^a}
		= \Omega^2\mathring T_{ijk}\,,
	\]
	i.e.\ it has weight $2$.
\end{remark}
\bigskip

\noindent According to \cite{Kress&Schoebel&Vollmer}, $\mathring 
T\indices{_{ij}^k}$ is closely related to the Weyl curvature.
Under mild assumptions we shall find below that $\mathring 
T\indices{_{ij}^k}$ carries enough information to reconstruct the conformal 
equivalence class of a (conformally) superintegrable system.

In view of properly superintegrable systems, at first sight it might seem 
that the conformal case would require additional information, in the form of 
an additional structure tensor $\tau_{ij}$.
We will find, however, that for abundant systems $\tau_{ij}$ is determined by 
$T_{ijk}$ and the Ricci curvature.
We summarise the discussion in this section with the following proposition.

\begin{proposition}
	\label{prop:Wilczynski}
	On a (pseudo-)Riemannian manifold $M$, every non-degenerate irreducible 
	conformally superintegrable system admits tensor fields $T$ and $\tau$ 
	with the following properties:
	\begin{enumerate}
		\item
		$T$ is well-defined and smooth on the open and dense subset 
		$N=\{G_{r_\text{max}}(A)\ne0\}\subseteq M$.
		It is of valence~$3$, symmetric and trace-free in its first
		two indices:
		\begin{align}
			\label{eq:T:symmetries}
			T_{jik}&=T_{ijk}&
			g^{ij}T_{ijk}&=0
		\end{align}
		Moreover, $\tau$ is well-defined and smooth on $N$. It is of 
		valence~$2$, symmetric and trace-free:
		\begin{align}
			\label{eq:tau:symmetries}
			\tau_{ji}&=\tau_{ij}&
			g^{ij}\tau_{ij}&=0
		\end{align}
		\item
		The conformally superintegrable potential 
		satisfies the Wilczynski equation~\eqref{eqn:Wilczynski.conformal}.
		\item
		$T$ and $\tau$ are uniquely determined by the metric and by the 
		trace-free conformal Killing tensors $C^{(\alpha)}$ in the 
		conformally superintegrable system, and depend only on the subspace 
		$\mathcal C$ spanned by these $C^{(\alpha)}$, i.e. it is invariant 
		under basis changes on $\mathcal C$. Note that such a change of 
		basis is conformally invariant.
		\item\label{item:transformation}
		The trace-free part $\mathring T\indices{_{ij}^k}$ of the 
		$(2,1)$-tensor field $T\indices{_{ij}^k}$ is 
		invariant under conformal changes of the conformally superintegrable 
		system.
	\end{enumerate}
	The components $T_{ijk}$ of $T$ are given explicitly in terms of the
	Killing tensors by the rank-$r$ Moore-Penrose inverse, where
	$r=r_\text{max}$ is the maximal rank \eqref{eq:rank:max}.
\end{proposition}
\begin{proof}
	The first three assertions follow analogous to the case of properly 
	superintegrable systems, see \cite{Kress&Schoebel&Vollmer}, such that the 
	tensors $T$ and $\tau$ are given by $A^\dagger b_1$ and $A^\dagger b_0$, 
	respectively, using Equation~\eqref{eq:Ax=b1y+b0V}.
	To see~\ref{item:transformation}, take the trace-free part 
	of~\eqref{eqn:transformation.T}.
\end{proof}

Let us reconsider the aforesaid in the light of 
Definition~\ref{def:c-superintegrable}.
The corollary below is easily obtained using the transformation 
rules~\eqref{eqn:transformation.rules} and recalling
the following transformation rules for the curvature.
\begin{remark}\label{rmk:trafo.curvature}
	The following transformation rules can be found in 
	\cite{curry_2014,Kulkarni69,Kulkarni70}, for instance.
	The Weyl tensor is conformally invariant.
	The trace-free Ricci tensor and the scalar curvature transform, 
	respectively, according to
	\begin{subequations}\label{eqn:trafo.Ricci.curvature}
		\begin{align}
			\mathring{R}_{ij}
			&\to \mathring{R}_{ij} - (n-2)
			\left(
			\Upsilon_{,ij}-\Upsilon_{,i}\Upsilon_{,j}
			\right)_\circ
			\\
			R
			&\to \Omega^{-2} \bigg( R+2(n-1)\left(
			\Delta\Upsilon-\tfrac{n-2}{2}\,\Upsilon^{,a}\Upsilon_{,a}
			\right)\bigg)\,.
		\end{align}
	\end{subequations}
	Therefore the Schouten tensor transforms according to
	\begin{equation}\label{eqn:transformation.schouten}
		\schouten_{ij}
		\to \schouten_{ij} - \Upsilon_{,ij} + \Upsilon_{,i}\Upsilon_{,j}
		- \frac12\,g_{ij}\,\Upsilon_{,c}\Upsilon^{,c}\,.
	\end{equation}
\end{remark}

\begin{corollary}~
	
	\begin{enumerate}
	\item
		Every non-degenerate irreducible c-superintegrable system on a 
		conformal manifold $(M,c=[g])$ admits a well-defined totally 
		symmetric and trace-free tensor field $S=\mathring T$ that is 
		invariant under conformal transformations.
	\item\label{item:equal.S.equal.CCSIS}
		If this tensor coincides for two non-degenerate irreducible 
		c-superintegrable systems on the same conformal manifold, then these 
		systems are conformally equivalent if also
		\[
			\tau_{ij}
			-\frac23\,S_{ijk}\bar t^k
			-2\,\mathring\schouten_{ij}
			+\frac13\,{\young(ij)}_\circ\bar t_i\bar t_j
		\]
		coincides for both systems.
	\end{enumerate}
\end{corollary}

%============================================================================%
\section{Conformally superintegrable potentials}\label{sec:non-degenerate}

Written in local coordinates, the Wilczynski 
equation~\eqref{eqn:Wilczynski.conformal} is a 
second order partial differential equation (PDE) for the potential $V$. In 
Proposition~\ref{prop:Wilczynski} we have seen that for irreducible second 
order superintegrable systems, the coefficients in this PDE are determined by 
the space of trace-free conformal Killing tensors and the metric.

%----------------------------------------------------------------------------%
\subsection{Prolongation of a superintegrable potential}
\label{sec:prolongation.potential}
The Wilczynski equation~\eqref{eqn:Wilczynski.conformal} expresses the 
trace-free part of the Hessian of the potential $V$ linearly in the 
differential $\nabla V$, the Laplacian $\Delta V$, and the potential $V$ 
itself.
The coefficients in this expression are determined by the structure 
tensors~$T$ and~$\tau$.
In the following Proposition, the Wilczynski 
equation~\eqref{eqn:Wilczynski.conformal} is extended by 
a second equation which expresses the derivatives of $\Delta V$ linearly in 
$\Delta V$, $\nabla V$ and $V$. Again, the coefficients are determined by the 
structure tensors.
The system~\eqref{eq:prolongation:V} below is an extended system of PDEs 
for~\eqref{eqn:Wilczynski.conformal}. Such an extended system is called a 
\emph{prolongation} of the initial PDE, and it allows one to make use of the 
powerful theory of parallel linear connections.
In particular, by virtue of~\eqref{eq:prolongation:V} all higher derivatives 
of $V$, $\nabla V$ and $\Delta V$ are expressed in terms of these.
\begin{proposition}\label{prop:Wilczynski.conformal.prolongation}
	Equation~\eqref{eqn:Wilczynski.conformal} has the prolongation
	\begin{subequations}\label{eq:prolongation:V}
		\begin{alignat}{9}
			\label{eq:prolongation:V:1}
			V_{,ij}
			&=T\indices{_{ij}^m}&&V_{,m}
			+\tfrac1n&g_{ij}&\Delta V
			&+\tau_{ij}&V\\
			\label{eq:prolongation:V:2}
			\tfrac{n-1}n(\Delta V)_{,k}
			&=q\indices{_k^m}&&V_{,m}
			+\tfrac1n&t_k&\Delta V
			&+\gamma_k&V,
		\end{alignat}
	\end{subequations}
	with
	\begin{subequations}
		\begin{align}
			\label{eq:q}
			q\indices{_j^m}&:=Q\indices{_{aj}^{am}}\\
			\label{eq:t}
			t_j&:=T\indices{_{aj}^a}\\
			\label{eq:gamma}
			\gamma_k&:=\Gamma\indices{_{ak}^a}
		\end{align}
	\end{subequations}
	where
	\begin{align}
		\label{eq:Q}
		Q\indices{_{ijk}^m}&:=
		T\indices{_{ij}^m_{,k}}
		+T\indices{_{ij}^l}T\indices{_{lk}^m}
		-R\indices{_{ijk}^m} + \tau_{ij}g_k^m. \\
	   \label{eq:Gamma}
	   \Gamma_{ijk}&:=
	   \tau_{ij,k}+T\indices{_{ij}^a}\tau_{ak}
	\end{align}
\end{proposition}

\begin{proof}
	Equation~\eqref{eq:prolongation:V:1} is nothing but
	the Wilczynski equation~\eqref{eqn:Wilczynski.conformal}.  Substituting 
	it into its covariant
	derivative, we obtain
	\begin{align*}
		V_{,ijk}&
		=\bigl(
			T\indices{_{ij}^m_{,k}}
			+T\indices{_{ij}^l}T\indices{_{lk}^m}
			+\tau_{ij}g_k^m
		\bigr)V_{,m} \\
		&\quad+\tfrac1n
		\bigl(
			T_{ijk}\Delta V
			+g_{ij}(\Delta V)_{,k}
		\bigr)
		+\bigl(
		    \tau_{ij,k}+T\indices{_{ij}^m}\tau_{mk}
		\bigr)\,V.
	\end{align*}
	Antisymmetrising in $j$ and $k$ and applying the Ricci identity gives
	\[
		R\indices{^m_{ijk}}V_{,m}
		=\young(j,k)
		\Bigl[
			\bigl(
				T\indices{_{ij}^m_{,k}}
				+T\indices{_{ij}^l}T\indices{_{lk}^m}
				+\tau_{ij}g_k^m
			\bigr)V_{,m}
			+\tfrac1n
			\bigl(
				T_{ijk}\Delta V
				+g_{ij}(\Delta V)_{,k}
			\bigr)
			+\Gamma_{ijk}V
		\Bigr].
	\]
	Solving for the term involving $(\Delta V)_{,k}$ on the right hand side, we get
	\[
		\frac1n\young(j,k)g_{ij}(\Delta V)_{,k}
		=-\young(j,k)
		\bigl(
			Q\indices{_{ijk}^m}V_{,m}
			+\tfrac1nT_{ijk}\Delta V
			+\Gamma_{ijk}V
		\bigr).
	\]
	The contraction of this equation in $i$ and $j$ now yields
	\eqref{eq:prolongation:V:2}, since $T_{ijk}$ and $Q\indices{_{ijk}^m}$ are
	trace-free in $i$ and $j$ by definition.
\end{proof}

%The proof of the following two corollaries is the same as in the case of 
%properly superintegrable systems, where (proper) Killing tensors are used 
%instead of conformal ones.
%The first corrollary corresponds to~\cite[Proposition 
%3.2]{Kress&Schoebel&Vollmer}.

%\begin{corollary}
%The potential $V$ of an irreducible conformally superintegrable system 
%satisfies the Wilczynski equation~\eqref{eqn:Wilczynski.conformal}.
%\end{corollary}
%The next corollary corresponds to~\cite[Corollary 
%4.2]{Kress&Schoebel&Vollmer}.
%\begin{corollary}
% The potential $V$ of an irreducible conformally superintegrable system on a 
% (pseudo-)Riemannian manifold with analytic metric is analytic. It is 
%determined 
% locally by the values of $V$, $\nabla V$ and~$\Delta V$ in a non-singular 
% point of $V$.
%\end{corollary}

%----------------------------------------------------------------------------%
\subsection{Integrability conditions for a non-degenerate potential}

From the perspective of the equations \eqref{eq:prolongation:V}, 
non-degeneracy implies that the corresponding integrability conditions be 
satisfied generically, 
independently of the potential. With this condition we finally eliminate the 
potential $V$ (and therefore all $V^{(\alpha)}$) from the system, leaving a 
system of equations depending only on the structure tensors $T_{ijk}$ and 
$\tau_{ij}$, as well as on the underlying metric $g$ and its curvature.

\begin{proposition}
	The Wilczynski equation~\eqref{eqn:Wilczynski.conformal} locally has a 
	non-degenerate solution 
	$V$ %in the sense of Definition~\ref{def:non-degenerate}
	if and only if the following integrability conditions hold:
	\begin{subequations}\label{eq:SIC:V}
		\begin{align}
			\label{eq:SIC:V:T}
			\young(j,k)\Bigl(T_{ijk}+\tfrac1{n-1}g_{ij}t_k\Bigr)&=0\\
			\label{eq:SIC:V:Q}
			\young(j,k)\Bigl(Q_{ijkl}+\tfrac1{n-1}g_{ij}q_{kl}\Bigr)&=0\\
			\label{eq:SIC:V:Gamma}
			\young(j,k)\Bigl(
			\Gamma_{ijk}+\tfrac{1}{n-1}\,g_{ij}\gamma_k
			\Bigr)&=0\\
			\label{eq:SIC:V:q}
			\young(k,l)
			\bigl(
				q\indices{_k^n_{,l}}
				+T\indices{_{ml}^n}q\indices{_k^m}
				+\tfrac1{n-1}t_kq\indices{_l^n}
				+g_k^n\gamma_l
			\bigr)&=0\\
			\label{eq:SIC:V:gamma}
			\young(i,j)\Bigl(
			 \gamma_{i,j}+q\indices{_i^m}\tau_{mj}+\tfrac{1}{n-1}\,t_i\gamma_j
			\Bigr)&=0.
		\end{align}
	\end{subequations}
\end{proposition}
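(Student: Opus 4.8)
The plan is to read the prolongation \eqref{eq:prolongation:V} as the system of parallel-transport equations of a linear connection $D$ on the rank-$(n+2)$ bundle $E=\underline{\mathbb R}\oplus T^*M\oplus\underline{\mathbb R}$ whose fibre over a point records the data $(V,\nabla V,\Delta V)$: equation \eqref{eq:prolongation:V:1} prescribes $\nabla_j\nabla_iV$, equation \eqref{eq:prolongation:V:2} prescribes $\nabla_k\Delta V$, and $\nabla_kV=(\nabla V)_k$ closes the list. Since \eqref{eq:prolongation:V:2} was derived from \eqref{eqn:Wilczynski.conformal} (and the value $\Delta V$ is consistently reproduced by tracing \eqref{eq:prolongation:V:1}), a function $V$ solves \eqref{eqn:Wilczynski.conformal} if and only if $(V,\nabla V,\Delta V)$ is a $D$-parallel section of $E$. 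Hence the solution space always has dimension at most $n+2$, and non-degeneracy in the sense of Definition~\ref{def:non-degenerate} is exactly the statement that $D$ is flat. The whole proof therefore reduces to computing the curvature of $D$ and identifying its vanishing with \eqref{eq:SIC:V}; in particular no further prolongation is required, as flatness is detected by one round of cross-differentiation.

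For this I would cross-differentiate each of \eqref{eq:prolongation:V:1} and \eqref{eq:prolongation:V:2}. Differentiating \eqref{eq:prolongation:V:1} once more, antisymmetrising in the last two indices and applying the Ricci identity — which is precisely why the Riemann tensor already sits inside $Q$ in \eqref{eq:Q} — and then eliminating $(\Delta V)_{,k}$ by means of \eqref{eq:prolongation:V:2}, one arrives at a tensor identity of the form
\[
  0=\young(j,k)\Bigl[\bigl(Q_{ijkl}+\tfrac{1}{n-1}g_{ij}q_{kl}\bigr)V^{,l}+\tfrac1n\bigl(T_{ijk}+\tfrac{1}{n-1}g_{ij}t_k\bigr)\Delta V+\bigl(\Gamma_{ijk}+\tfrac{1}{n-1}g_{ij}\gamma_k\bigr)V\Bigr].
\]
Under non-degeneracy $(V,\nabla V,\Delta V)$ is an arbitrary element of $\mathbb R^{n+2}$ at each point, so the three coefficients must vanish separately; these are \eqref{eq:SIC:V:Q}, \eqref{eq:SIC:V:T} and \eqref{eq:SIC:V:Gamma}. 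Treating \eqref{eq:prolongation:V:2} the same way — here the Ricci identity is vacuous, since $\Delta V$ is a scalar and $(\Delta V)_{,kl}$ is automatically symmetric — and substituting \eqref{eq:prolongation:V:1}--\eqref{eq:prolongation:V:2} gives
\[
  0=\young(k,l)\Bigl[\bigl(q\indices{_k^n_{,l}}+T\indices{_{ml}^n}q\indices{_k^m}+\tfrac{1}{n-1}t_k\,q\indices{_l^n}+g_k^n\gamma_l\bigr)V_{,n}+\tfrac1n\bigl(q_{kl}+t_{k,l}\bigr)\Delta V+\bigl(\gamma_{k,l}+q\indices{_k^m}\tau_{ml}+\tfrac{1}{n-1}t_k\gamma_l\bigr)V\Bigr],
\]
whose $V_{,n}$- and $V$-coefficients are \eqref{eq:SIC:V:q} and \eqref{eq:SIC:V:gamma}, and whose $\Delta V$-coefficient is the seemingly additional relation $\young(k,l)\bigl(q_{kl}+t_{k,l}\bigr)=0$. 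Altogether the curvature of $D$ is governed by these six relations, so $D$ is flat if and only if all six hold.

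The substantive point — and the step I expect to be the main obstacle — is to show that the extra relation $\young(k,l)(q_{kl}+t_{k,l})=0$ is \emph{not} independent, so that \eqref{eq:SIC:V} really is the full integrability system. Expanding $q_{kl}$ via \eqref{eq:Q}, its Ricci and $\tau$ contributions are symmetric and drop under the antisymmetrisation, leaving $\young(k,l)q_{kl}$ expressed through $\young(k,l)\nabla^iT_{ikl}$ and quadratic self-contractions of $T$. The divergence term is handled by contracting the covariant derivative of \eqref{eq:SIC:V:T}, which yields $\young(j,k)\bigl(\nabla^iT_{ijk}+\tfrac{1}{n-1}t_{k,j}\bigr)=0$, and the quadratic terms are reduced using \eqref{eq:SIC:V:T} itself (to trade the hook part $T_{ijk}-T_{ikj}$ for traces of $T$) together with the second Bianchi identity and the trace relations that fall out of \eqref{eq:SIC:V:q} and \eqref{eq:SIC:V:gamma}. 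The upshot is that $\young(k,l)(q_{kl}+t_{k,l})$ collapses to zero identically — equivalently, that the one-form $t_i$ is closed. This is a piece of pure tensor-algebra bookkeeping, of exactly the kind for which the computer algebra system credited in the paper is convenient; with it in hand, flatness of $D$ is equivalent to \eqref{eq:SIC:V}, which by the first paragraph is equivalent to the existence of a non-degenerate solution of \eqref{eqn:Wilczynski.conformal}.
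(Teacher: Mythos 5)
Your proposal is correct and follows essentially the same route as the paper: differentiate the prolongation \eqref{eq:prolongation:V}, apply the Ricci identities, and use non-degeneracy to split the resulting identities into the coefficients of $V$, $\nabla V$ and $\Delta V$, which yields \eqref{eq:SIC:V} together with the extra relation $\young(k,l)\bigl(t_{k,l}+q_{kl}\bigr)=0$. The only difference lies in disposing of that extra relation: the paper obtains it directly by contracting \eqref{eq:SIC:V:Q} over $i$ and $l$ (combined with the symmetry of $q_{ij}$, which already follows from \eqref{eq:SIC:V:T}), a one-line observation that replaces the Bianchi-identity bookkeeping you defer to computer algebra.
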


\begin{proof}
	The system \eqref{eq:prolongation:V} allows one to write all higher
	derivatives of $V$, $\nabla V$ and $\Delta V$ as linear combinations of 
	$V$, $\nabla V$ and $\Delta V$.
	Necessary and sufficient integrability conditions are
	then given by the Ricci	identities
	\begin{align*}
		\young(j,k)V_{,ijk}&=R\indices{^m_{ijk}}V_{,m}&
		\young(k,l)(\Delta V)_{,kl}&=0\,,
	\end{align*}
	where we substitute~\eqref{eqn:Wilczynski.conformal} in the left hand 
	side of the equations.
	We obtain
	\begin{align*}
		\young(j,k)
		\bigl(
			Q\indices{_{ijk}^m}
			+\tfrac1{n-1}g_{ij}q\indices{_k^m}
		\bigr)V_{,m}
		+\frac1n\young(j,k)
		\bigl(
			T_{ijk}
			+\tfrac1{n-1}g_{ij}t_k
		\bigr)\Delta V \\
		+\young(j,k)\Bigl(
		\Gamma_{ijk}+\tfrac{1}{n+1}\,g_{ij}\gamma_k
		\Bigr)V
		&=0
	\end{align*}
	and, respectively,
	\begin{align*}
		\young(k,l)
		\bigl(
			q\indices{_k^n_{,l}}
			+T\indices{_{ml}^n}q\indices{_k^m}
			+\tfrac1{n-1}t_kq\indices{_l^n}
		\bigr)V_{,n}
		+\frac1n\young(k,l)
		\bigl(
			t_{k,l}
			+q_{kl}
		\bigr)\Delta V \\
		+n\,\young(k,l)\Bigl(
		\gamma_{k,l}+q\indices{_k^m}\tau_{ml}+\tfrac{1}{n-1}\,t_k\gamma_l
		\Bigr)V
		&=0.
	\end{align*}
	For a non-degenerate superintegrable potential the coefficients of $\Delta V$, $\nabla V$ and $V$ must vanish separately.
	In addition to the stated integrability conditions, this yields the 
	condition
	\begin{equation}
		\label{eq:t+q}
		\young(k,l)
		\bigl(
			t_{k,l}
			+q_{kl}
		\bigr)=0.
	\end{equation}
	The latter is redundant, since it can be obtained from \eqref{eq:SIC:V:Q} by
	a contraction over $i$ and $l$.
\end{proof}

Note that the equations~\eqref{eq:SIC:V} are not invariant under conformal 
trnasformations, as they emerge from coefficients of $V$, $\nabla V$ and 
$\Delta V$, respectively.
Still, after a conformal transformation as in 
Proposition~\ref{prop:trafo.conformal.Killing.and.integrals}, the form 
of~\eqref{eq:SIC:V} is the same, but the metric, the curvature and the 
structure tensors are replaced.
We can solve Equation~\eqref{eq:SIC:V:T} right away, because it is linear and
does not involve derivatives.

\begin{proposition}\label{prop:solve.SIC:V:1}
	The first integrability condition for a superintegrable potential,
	Equation~\eqref{eq:SIC:V:T}, has the solution
	\begin{equation}
		\label{eq:T(S,t)}
		T_{ijk}
		= S_{ijk}+\young(ij)\left( \bar t_ig_{jk}-\frac 1ng_{ij}\bar t_k 
		\right),
	\end{equation}
	where $S$ is an arbitrary totally symmetric and trace-free tensor. The 
	1-form $\bar t_i$ is given by
	\begin{equation}\label{eqn:t.bar}
	  \bar t_i
	  = \frac{n}{(n-1)(n+2)}\,t_i
	  = \frac{n}{(n-1)(n+2)}\,T\indices{_{ia}^a}\,.
	\end{equation}
	Note that $S_{ijk}$ and $\bar t_i$ are uniquely determined by $T$.
\end{proposition}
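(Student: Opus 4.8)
Equation~\eqref{eq:SIC:V:T} is purely algebraic: it is linear in $T$, contains no derivatives, and the covector $t_i=T\indices{_{ij}^i}$ occurring in it is the trace~\eqref{eq:t} of $T$ itself. So the statement is pointwise linear algebra, and $\mathrm{O}(n)$-equivariant because the metric enters. The plan is to read~\eqref{eq:T(S,t)} as the decomposition of a tensor $T$ that is symmetric and trace-free in its first two indices into a totally symmetric, completely trace-free part $S_{ijk}$ and a ``pure-trace'' part built from the single surviving trace $t_i$, and to show that \eqref{eq:SIC:V:T} holds precisely when the mixed-symmetry (hook) component of $T$ vanishes and this pure-trace component is exactly $\young(ij)\!\bigl(\bar t_i g_{jk}-\tfrac1n g_{ij}\bar t_k\bigr)$, with $\bar t$ as in~\eqref{eqn:t.bar}.

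I would first treat the ``if'' direction. Put $A_{ijk}:=\young(ij)\!\bigl(\bar t_i g_{jk}-\tfrac1n g_{ij}\bar t_k\bigr)$ for an arbitrary $1$-form $\bar t$. Expanding the row symmetriser shows that $A$ is symmetric and trace-free over $(i,j)$, that its remaining trace is $A\indices{_{ij}^i}=\tfrac{(n-1)(n+2)}{n}\,\bar t_j$, and that $\young(j,k)A_{ijk}$ is a multiple of $\bar t_j g_{ik}-\bar t_k g_{ij}$. Since $\young(j,k)$ annihilates the totally symmetric $S$, substituting $T=S+A$ into~\eqref{eq:SIC:V:T} leaves a pure-trace identity relating $\bar t_j g_{ik}-\bar t_k g_{ij}$ to $g_{ij}t_k-g_{ik}t_j$; matching coefficients forces $\bar t_i=\tfrac{n}{(n-1)(n+2)}\,t_i$, which is~\eqref{eqn:t.bar}. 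For this $\bar t$ the right-hand side of~\eqref{eq:T(S,t)} therefore solves~\eqref{eq:SIC:V:T}, and its own trace is $A\indices{_{ij}^i}=t_i$, so the two occurrences of $t_i$ agree and the parametrisation is self-consistent.

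Conversely, let $T$ solve~\eqref{eq:SIC:V:T}. Define $\bar t_i$ from $T$ via~\eqref{eqn:t.bar} and set $S_{ijk}:=T_{ijk}-A_{ijk}$ as in~\eqref{eq:T(S,t)}. Then $S$ is symmetric and trace-free over $(i,j)$, and $S\indices{_{ij}^i}=T\indices{_{ij}^i}-A\indices{_{ij}^i}=t_j-t_j=0$, so $S$ is trace-free. For the missing symmetry, compute $\young(j,k)S_{ijk}=\young(j,k)T_{ijk}-\young(j,k)A_{ijk}$: by~\eqref{eq:SIC:V:T} the first term equals $-\tfrac1{n-1}(g_{ij}t_k-g_{ik}t_j)$, and by the previous step the second term equals the same expression once~\eqref{eqn:t.bar} is used, so $\young(j,k)S_{ijk}=0$. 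Thus $S$ is symmetric in its last two indices as well, hence totally symmetric, so $S_{ijk}$ is a totally symmetric, completely trace-free tensor and~\eqref{eq:T(S,t)} holds. Uniqueness is then immediate: $\bar t$ is manifestly a function of $T$ through~\eqref{eqn:t.bar}, hence so is $S$ through~\eqref{eq:T(S,t)}; and conversely~\eqref{eq:T(S,t)} recovers $T$ from $(S,\bar t)$, so the correspondence is bijective.

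The only real labour is the index bookkeeping in the second paragraph — expanding the Young symmetrisers, tracking the two independent traces of a $3$-tensor symmetric in its first two slots (only one of which, $t_i$, survives the condition $g^{ij}T_{ijk}=0$), and pinning down the coefficient $\tfrac{n}{(n-1)(n+2)}$. The structural reason this coefficient is forced, rather than leaving free parameters, is that the vector representation occurs with multiplicity one in the space of admissible $T$, so the pure-trace content of~\eqref{eq:SIC:V:T} decouples cleanly from its hook content; beyond this one only uses the elementary fact that a tensor symmetric in two adjacent slots and in another adjacent pair is totally symmetric. I do not expect any genuinely hard step.
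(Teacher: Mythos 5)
Your proof is correct and follows essentially the same route as the paper's: both arguments rest on the pointwise irreducible decomposition of a $3$-tensor that is symmetric and trace-free in its first two slots, with Equation~\eqref{eq:SIC:V:T} eliminating the hook component and a contraction over $j,k$ pinning down the coefficient $\tfrac{n}{(n-1)(n+2)}$. You merely spell out both directions and the bijectivity explicitly, where the paper compresses this into a single decomposition-and-trace computation.
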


\begin{proof}
  Let us decompose $T_{ijk}$, which by definition is trace-free and symmetric 
  in $(i,j)$,
  \[
  	\Yboxdim{7pt}
  	\yng(2)\otimes\yng(1)
  	\simeq\yng(3)\oplus\yng(2,1)
  	\simeq \underbrace{ {\yng(3)}_\circ }_{=S_{ijk}}
  		\oplus\yng(1)
  		\oplus {\yng(2,1)}_\circ
  		\oplus\yng(1)\,.
  \]
  Due to~\eqref{eq:SIC:V:T}, the penultimate component of this decomposition 
  vanishes, and therefore we obtain
  \[
   T_{ijk} = S_{ijk} + \frac16\,\young(ijk)\,g_{ij}s_k
	     + \frac14\,\young(ij)\,\young(j,k)\,g_{ij}\xi_k\,,
  \]
  where $s_k$ and $\xi_k$ are components of some 1-forms.
  Substituting~\eqref{eq:SIC:V:T}, and taking the trace in $(j,k)$,
  \[
   t_i = T\indices{_{ia}^a} = \frac{n+2}{3}\,s_i-\frac{n-1}{4}\,\xi_i\,.
  \]
  Taking the trace in $(i,j)$ instead,
  \[
   0 = T\indices{^a_{ai}} = \frac{n+2}{3}\,s_i+\frac{n-1}{2}\,\xi_i\,.
  \]
  Solving for $s_i$ and $\xi_i$, we find
  \[
  	s_i = \frac{2t_i}{n+2}\,,\quad
  	\xi_i = -\frac{4t_i}{3(n-1)}\,.
  \]
  Resubstituting into the initial formula for $T_{ijk}$, we arrive 
  at~\eqref{eq:T(S,t)}.
\end{proof}

\begin{corollary}~
	\label{cor:symmetry}
	\begin{enumerate}
		\item The tensor $q_{ij}$ is symmetric,
			i.e. $q_{ji}=q_{ij}$.
		\item The 1-form $\bar t_i$ is the derivative of a function $\bar t$,
			i.e. $\bar t_i=\bar t_{,i}$.
	\end{enumerate}
\end{corollary}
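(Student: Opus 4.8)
The plan is to reduce both assertions to a single fact: that the $1$-form $t_i=T\indices{_{ia}^a}$ is closed. Granting $dt=0$, part (ii) follows at once, since $\bar t_i=\tfrac{n}{(n-1)(n+2)}\,t_i$ by \eqref{eqn:t.bar}, so $\bar t$ is closed and hence (we work locally) exact, $\bar t_i=\bar t_{,i}$; and part (i) follows from \eqref{eq:t+q}, which, writing out the antisymmetrisation, says $q_{jk}-q_{kj}=-(t_{j,k}-t_{k,j})$, which then vanishes.

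To obtain $dt=0$ I would compare two relations between the antisymmetric parts $t_{[j,k]}$ and $q_{[jk]}$. The first is \eqref{eq:t+q} itself. The second I would get by contracting the integrability condition \eqref{eq:SIC:V:Q} over its two spectator indices, i.e.\ applying $g^{il}$: the $q$-terms then yield $\tfrac1{n-1}(q_{kj}-q_{jk})$, while $g^{il}Q_{ijkl}$ is computed from \eqref{eq:Q}, i.e.\ from $Q_{ijkl}=T_{ijl,k}+T\indices{_{ij}^p}T_{pkl}-R_{ijkl}+\tau_{ij}g_{kl}$, term by term. The derivative term gives $t_{j,k}$, using $g^{il}T_{ijl}=t_j$ (apply the symmetry $T_{ijl}=T_{jil}$); the term $g^{il}\tau_{ij}g_{kl}=\tau_{kj}$ is symmetric in $j,k$; the curvature term $-g^{il}R_{ijkl}$ contributes a symmetric expression in $j,k$ (any single trace of the Riemann tensor is symmetric in its remaining indices, being either zero or a multiple of the Ricci tensor); and the quadratic term $g^{il}T\indices{_{ij}^p}T_{pkl}=\operatorname{tr}(A_jA_k)$, where $A_j$ is the endomorphism with $(A_j)\indices{_a^b}=T\indices{_{aj}^b}$, is symmetric in $j,k$ by cyclicity of the trace. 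Hence only the derivative term survives the antisymmetrisation, so the contracted equation reads
\[
 t_{j,k}-t_{k,j}=\tfrac1{n-1}\bigl(q_{jk}-q_{kj}\bigr).
\]
Substituting $q_{jk}-q_{kj}=-(t_{j,k}-t_{k,j})$ from \eqref{eq:t+q} gives $\bigl(1+\tfrac1{n-1}\bigr)(t_{j,k}-t_{k,j})=0$; since $1+\tfrac1{n-1}=\tfrac{n}{n-1}\neq0$, we conclude $t_{j,k}=t_{k,j}$, i.e.\ $dt=0$. Parts (i) and (ii) then follow as explained above.

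The only point requiring genuine care is the symmetry bookkeeping inside $g^{il}Q_{ijkl}$: one must verify that every contribution other than $T_{ijl,k}$ is symmetric under $j\leftrightarrow k$, so that the antisymmetrisation cleanly isolates $t_{[j,k]}$. For the curvature term this is immediate from the algebraic symmetries of the Riemann tensor; for the quadratic term it rests on the symmetry of $T$ in its first two slots together with a relabelling of the contracted indices (equivalently, on cyclicity of the trace). Everything else is routine tensor calculus.
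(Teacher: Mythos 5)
Your argument is correct, but it reaches the conclusion by a genuinely different route than the paper. The paper first proves the symmetry of $q_{ij}$ by substituting the solved form \eqref{eq:T(S,t)} of $T_{ijk}$ into the definition \eqref{eq:q} and only then reads off $dt=0$ from \eqref{eq:t+q}; you instead establish $dt=0$ first, by playing \eqref{eq:t+q} off against the $g^{il}$-trace of \eqref{eq:SIC:V:Q}, and then deduce both claims, never invoking the decomposition of $T$ into $S$ and $\bar t$. Your symmetry bookkeeping in $g^{il}Q_{ijkl}$ is sound: the quadratic term is indeed $\operatorname{tr}(A_jA_k)$ and hence symmetric under $j\leftrightarrow k$, the single trace of the curvature term is (up to sign) the Ricci tensor, and the $\tau$-term is symmetric by hypothesis, so only $t_{[j,k]}$ survives the antisymmetrisation. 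One point deserves emphasis: the paper remarks that \eqref{eq:t+q} ``can be obtained from \eqref{eq:SIC:V:Q} by a contraction over $i$ and $l$'', and if that were literally true your two relations would be one and the same and the argument would collapse. They are not --- the contraction yields $t_{[j,k]}=\tfrac{1}{n-1}\,q_{[jk]}$ in the normalisation where \eqref{eq:t+q} yields $t_{[j,k]}=-q_{[jk]}$, and it is exactly this mismatch of coefficients, $\tfrac{n}{n-1}\neq 0$, that forces both antisymmetric parts to vanish; the two relations coincide only a posteriori. Since \eqref{eq:t+q} arises from the coefficient of $\Delta V$ in the second Ricci identity while \eqref{eq:SIC:V:Q} arises from the coefficient of $\nabla V$ in the first, they are independent inputs and your argument is not circular. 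What your route buys is independence from the explicit solution \eqref{eq:T(S,t)} of the first integrability condition (whose direct substitution into \eqref{eq:q}, incidentally, still leaves a term proportional to $d\bar t$, so the paper's one-line proof of part (i) also implicitly leans on \eqref{eq:t+q}); the price is a slightly longer trace computation.
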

\noindent Note that consequently also $t_i=t_{,i}$. Without loss of
generality we impose $\bar t=\tfrac{n}{(n-1)(n+2)}\,t$ on the arbitrary 
integration constant.
\begin{proof}
	The first statement follows from substituting \eqref{eq:T(S,t)} into the
	definition \eqref{eq:q} of $q_{ij}$.  The second then follows from \eqref{eq:t+q}.
\end{proof}

\begin{proposition}
	The third integrability condition for a superintegrable potential,
	Equation~\eqref{eq:SIC:V:Gamma}, has the solution
	\begin{equation}
		\label{eq:Gamma.decomposed}
		\Gamma_{ijk}
		=\Sigma_{ijk}+\young(ij)(\bar \gamma_ig_{jk}-\frac 1ng_{ij}\bar \gamma_k),
	\end{equation}
	where $\Sigma$ is an arbitrary totally symmetric and trace free tensor and
	\[
		\bar \gamma_i=\frac n{(n+2)(n-1)}\,\gamma_{i}.
	\]
	Note that $\Sigma$ and $\gamma$ are uniquely determined by $\Gamma$.
\end{proposition}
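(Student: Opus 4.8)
The plan is to mirror exactly the proof of Proposition~\ref{prop:solve.SIC:V:1}, since Equation~\eqref{eq:SIC:V:Gamma} has precisely the same algebraic shape as Equation~\eqref{eq:SIC:V:T}: it is a linear, derivative-free constraint asserting that the antisymmetrisation in $j,k$ of $\Gamma_{ijk}+\tfrac1{n-1}g_{ij}\gamma_k$ vanishes, where $\Gamma_{ijk}$ is (by its defining formula \eqref{eq:Gamma}, $\Gamma_{ijk}=\tau_{ij,k}+T\indices{_{ij}^a}\tau_{ak}$) symmetric and tracefree in its first two indices $i,j$, and $\gamma_k=\Gamma\indices{_{ak}^a}$ is its unique independent trace. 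Structurally this is the identical representation-theoretic situation, so the same decomposition and the same bookkeeping apply verbatim with $T\mapsto\Gamma$, $t\mapsto\gamma$, $S\mapsto\Sigma$.

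Concretely, first I would record that a tensor $\Gamma_{ijk}$ which is symmetric and tracefree in $i,j$ decomposes under $\mathrm{GL}(n)$ into a totally symmetric tracefree part $\Sigma_{ijk}$ (the $\yng(3)_\circ$ component), a ``trace part'' of the form $\tfrac16\,\young(ijk)\,g_{ij}s_k$ carrying a vector $s_k$, and a hook component $\tfrac14\big(\young(ij)\,\young(j,k)\,\Gamma_{ijk}\big)$ of symmetry type $\yng(2,1)$. Substituting this decomposition into \eqref{eq:SIC:V:Gamma} kills the totally symmetric part and the hook part automatically (because \eqref{eq:SIC:V:Gamma} is an antisymmetrisation in $j,k$ of something already symmetric in $i,j$, exactly as in the $T$ case), leaving a linear relation between the vector $s_k$ and the trace $\gamma_k$. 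Taking the trace in $j,k$ of the substituted equation and using $\gamma_i=\Gamma\indices{_{ia}^a}$ then yields $4(n+2)\,s_i=\gamma_i$, the same numerical coefficient as before. Feeding this back produces \eqref{eq:Gamma.decomposed} with $\bar\gamma_i=\tfrac{n}{(n+2)(n-1)}\gamma_i$, and since the decomposition of a tensor into its irreducible $\mathrm{GL}(n)$ components is unique, $\Sigma$ and $\gamma$ are determined by $\Gamma$.

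There is essentially no obstacle here: the entire content is that \eqref{eq:SIC:V:Gamma} and \eqref{eq:SIC:V:T} are the \emph{same} equation on a tensor with the \emph{same} symmetry type, so the proof is a reference to Proposition~\ref{prop:solve.SIC:V:1} with relabelled tensors. The only point requiring a word of care is that $\Gamma_{ijk}$ is genuinely symmetric and tracefree in its first two indices — this is immediate from \eqref{eq:Gamma}, since $\tau_{ij}$ and $T\indices{_{ij}^a}$ both are, and covariant differentiation preserves the symmetrisation over $i,j$ and (with $g$ parallel) the tracefreeness. I would therefore write the proof as: ``Equation~\eqref{eq:SIC:V:Gamma} has the same form as \eqref{eq:SIC:V:T} with $T_{ijk}$ replaced by $\Gamma_{ijk}$ and $t_k$ by $\gamma_k$; since $\Gamma_{ijk}$ is symmetric and tracefree in $i,j$ by \eqref{eq:Gamma}, the claim follows by repeating the argument of Proposition~\ref{prop:solve.SIC:V:1} verbatim.'' If a self-contained version is preferred, one simply copies the three-line computation (decompose, substitute, take the trace in $j,k$, solve for $s_i$) with the obvious substitutions.
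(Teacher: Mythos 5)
Your proposal is correct and takes essentially the same approach as the paper, whose entire proof is the single remark that the argument is the same as that of Proposition~\ref{prop:solve.SIC:V:1}. Your added check that $\Gamma_{ijk}$ is symmetric and tracefree in its first two indices (immediate from \eqref{eq:Gamma}) is precisely the observation that justifies transferring that argument verbatim with $T\mapsto\Gamma$, $t\mapsto\gamma$, $S\mapsto\Sigma$.
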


\noindent The proof is the same as that of Proposition~\ref{prop:solve.SIC:V:1}.

%----------------------------------------------------------------------------%
\subsection{The conformal scale function}
As we have just seen, the trace $t_i$ of the primary structure tensor is the 
differential of a function $t$.
We thus obtain the following transformation rule under conformal changes of 
the Hamiltonian, which is an immediate consequence of~\eqref{eqn:trafo.t.i}.

\begin{lemma}\label{la:trafo.t}
 Under a conformal change of the superintegrable system, say 
 $H\mapsto\Omega^{-2}H$, $\Omega>0$, the function $t$ transforms as
 \begin{equation}\label{eqn:trafo.t}
  \bar t \mapsto \bar t -3\Upsilon\quad\text{up to an irrelevant constant}\,,
 \end{equation}
 where $\Upsilon=\ln\Omega$.
 % as in Lemma~\ref{la:Stackel.transformation.rules}.
\end{lemma}
Note that the function $\bar t$ is determined by the structure tensor $T$ 
only up to an irrelevant constant.
The symmetry group of c-superintegrable systems is 
$\mathfrak{S}=\mathrm{Conf}(M)\rtimes\R^*$, c.f.\ 
Remark~\ref{rmk:symmetry.group.cSIS}. The second factor of $\mathfrak S$ does 
not affect $\bar t$. Indeed, we see that if 
$\bar t^\text{new}-\bar t^\text{old}=c\in\R$, then 
$\Omega=e^{-\frac{c}{3}}$ and thus
\[
	H^\text{new}=e^{\tfrac{2c}{3}}\,H^\text{old}\,.
\]
Lemma~\ref{la:trafo.t} above therefore confirms that $\bar t$ behaves like a 
scale function.
\begin{definition}\label{def:scaling}
The \emph{conformal scale function} is the density of weight~1 defined by
\begin{equation*}
  \cs = e^{-\frac13\,\bar{t}}\,.\qedhere
\end{equation*}
\end{definition}

Lemma~\ref{la:trafo.t} allows us to change $\bar t$ arbitrarily, resulting in 
a natural gauge freedom of a conformally superintegrable system.
There are three natural scale choices, in particular, that are relevant in 
this paper, each of which has 
specific features that we can exploit to gain information or to simplify 
certain computations. Table~\ref{tab:scales} summarises some properties of 
the scale choices and the notation we use.
\medskip

%++++++++++++++++++++++++++++++++++++++++++++++++++++++++++++++++++++++++++++%
\subsubsection{Standard scale}
This scale choice realises $\bar t_{,i}=0$.
\begin{definition}
	A conformally superintegrable system with $\bar t_i = 0$ is said to be in 
	\emph{standard scale}.
\end{definition}

\noindent We shall use a specific notation for the metric and the secondary 
structure tensor when we work in standard scale.
Given an arbitrary scale choice, we can apply a conformal transformation with 
conformal rescaling $\Omega=e^{\frac13\bar t}$. Let $\Upsilon=\ln\Omega$. The 
transformed metric of the system in standard scale, then is
\begin{subequations}
	\begin{align}
		\label{eq:standard.scale.metric}
		\tilde g_{ij} &= e^{\frac23\,\bar t}\,g_{ij} =: \mathfrak g_{ij}\,,
		\\
		\intertext{and the new structure tensors become
			$\tilde T\indices{_{ij}^k} = S\indices{_{ij}^k}$
			and}
		\label{eqn:aleph}
		\tilde\tau_{ij} &= 
		\tau_{ij}
		+ \frac23\,S\indices{_{ij}^k}\bar t_{,k}
		- {\young(ij)}_\circ
		\left(
		\frac13\bar t_{,ij} -\frac49\,\bar t_{,i}\bar t_{,j}
		\right)
		=: \aleph_{ij}\,.
		\\
		\intertext{For later reference, we mention the Schouten curvature 
		$\mathfrak{P}_{ij}$ of $\mathfrak g_{ij}$, which is given by}
		\label{eqn:frak.schouten}
		\mathfrak{P}_{ij} &=
		\schouten_{ij}
		- \Upsilon_{,ij} + \Upsilon_{,i}\Upsilon_{,j}
		- \frac12\,g_{ij}\,\Upsilon_{,c}\Upsilon^{,c}\,,
	\end{align}
	while the Weyl curvature remains unchanged under conformal 
	transformations. Equations \eqref{eqn:aleph} and 
	\eqref{eqn:frak.schouten} are obtained, respectively, from 
	\eqref{eqn:transformation.tau} and \eqref{eqn:transformation.schouten}.
\end{subequations}

\noindent Note that the standard scale is not unique, as we may add any 
constant to $\bar t$. For simplicity we usually choose $\bar t=0$.
As discussed after Lemma~\ref{la:trafo.t}, however, this only means that the 
Hamiltonian is multiplied by a constant, and moreover the structure tensor is 
not changed in the process.
From the viewpoint of conformally superintegrable systems however, if we 
multiply the metric by a constant, this typically changes the underlying 
metric (unless the transformation is already in $\mathrm{Diff}(M)$).
Yet, the space $\mathring{\mathcal C}$ of conformal Killing tensors of a 
conformally superintegrable system remains unaffected by such a change.

The standard scale has two major advantages: On the one hand, it yields very 
compact equations, facilitating some otherwise tedious computations. On the 
other hand, the standard scale exposes the invariant data of the problem, 
which is going to be particularly helpful when we discuss conformal 
equivalence classes.

\begin{example}
	The systems VII [5], O and A in Table~\ref{tab:3D} are in standard scale.
\end{example}
\medskip

%++++++++++++++++++++++++++++++++++++++++++++++++++++++++++++++++++++++++++++%
\subsubsection{Flat scale}
This scale choice only exists for conformally flat metrics.
\begin{definition}
	A conformally superintegrable system with flat curvature is said to be in 
	\emph{flat scale}.
\end{definition}

\noindent We find, using \eqref{eq:standard.scale.metric}, that there is a 
function $\rho:\mathcal{C}^\infty(M)\to\R$ such that
\begin{equation}\label{eq:flat.scale.metric}
	\mathfrak g_{ij} = e^{\frac23\rho}\,h_{ij}.
\end{equation}
where $h$ has vanishing curvature.
A major advantage of flat scales is that covariant derivatives coincide with 
partial ones, facilitating concrete computations in local coordinates.
Moreover, the existence of a flat scale permits us to express the Ricci 
curvature in terms of the scalar function $\rho$ 
using~\eqref{eqn:trafo.Ricci.curvature}.

Flat scales are not unique. For example, we can add any constant to $\rho$.
According to \cite{Kulkarni70}, any conformal change transforming a flat 
metric into a flat metric is given via $\rho\to\rho-\eta$ where $\eta$ is a 
function satisfying
\[
	\lsb Q(Y,Z)+g(Y,Z)r \rsb\,X
	- \lsb Q(X,Z)+g(X,Z)r \rsb\,Y
	+g(Y,Z)Q(X) -g(X,Z)Q(Y) = 0
\]
where
\[
	Q(X,Y)=(\nabla^2\eta)(X,Y)-X(\eta)Y(\eta)
\]
with $g(Q(X),Y)=Q(X,Y)$ and $r=g(d\eta,d\eta)$.

\begin{example}
	All systems in Table~\ref{tab:3D} are in flat scale.
	In particular, note that the systems III [23] and V [32] are conformally 
	equivalent.
	%\cite{Capel_phdthesis,Kalnins&Kress&Miller-IV}.\todo{extend}
\end{example}
\medskip

%++++++++++++++++++++++++++++++++++++++++++++++++++++++++++++++++++++++++++++%
\subsubsection{Proper scale}
A third natural choice is the proper scale, in which the system is 
properly superintegrable (up to a trace correction of the trace-free 
conformal 
Killing tensors).
As mentioned earlier, any conformally superintegrable system is conformally 
equivalent to a properly superintegrable system \cite[Theorem 
4.1.8]{Capel_phdthesis}. According to Lemma~\ref{la:proper.then.tau0} it 
satisfies $\tau_{ij}=0$.
\begin{definition}
	A conformally superintegrable system with $\tau_{ij} = 0$ is said to be 
	in \emph{proper scale}.
\end{definition}

\noindent Again, the proper scale choice is not unique.
Its advantage is that all the known results about properly superintegrable 
systems can be invoked.
Yet it is less useful for gaining insight into the underlying conformal 
geometry.
Nevertheless, from the viewpoint of conformal geometry, proper scale choices 
have some interesting properties which we explore in 
Section~\ref{sec:proper.constant.curvature} for constant curvature spaces.

\begin{example}
	All systems in Table~\ref{tab:3D} are in proper scale.
	For an example that is in proper scale, but neither in flat nor standard 
	scale, consider the metric $g=(z\bar z+4)^{-1}dzd\bar z$ on the 
	2-sphere. It admits the superintegrable potential
	\[
		V =
		a_0\,\left(\frac{z\bar z+4}{z+\bar z}\right)^2
		+ a_1\,\left(\frac{z\bar z+4}{z-\bar z}\right)^2
		+ a_2\,\lb\frac{z\bar z+4}{z\bar z-4}\rb^2
		+ a_3
	\]
	with parameters $a_i\in\R$, and satisfies
	\[
		\bar t = \frac32\,\ln\frac{i\,(z\bar z+4)^3}
							{(z\bar z-4)(z+\bar z)(z-\bar z)}
	\]
\end{example}

\begin{example}[Generic system on the 3-sphere]
\label{ex:generic.3.sphere}
	Consider the 3-sphere with metric
	\[
		g = d\phi^2 + \sin^2(\phi)\,\left(
				d\theta^2 +\sin^2(\theta)\,d\psi^2
			\right)\,.
	\]
	The potential
	\[
		V = \frac{a_1}{\cos^2(\phi)}
		+ \frac{a_2}{\sin^2(\phi)\cos^2(\theta)}
		+ \frac{a_3}{\sin^2(\phi)\sin^2(\theta)\cos^2(\psi)}
		+ \frac{a_4}{\sin^2(\phi)\sin^2(\theta)\sin^2(\psi)}
		+ a_0
	\]
	is non-degenerate and defines the so-called generic system on the 
	3-sphere; in \cite{Kalnins&Kress&Miller-IV} it is labelled VIII.
	It is in proper scale, but neither in flat nor in standard scale.
\end{example}

Note that the Harmonic Oscillator, see Example~\eqref{ex:IHO}, is 
simultaneously in standard, flat and proper scale. In Section~\ref{sec:ccsis} 
we find that this is an immediate consequence of $T_{ijk}=0$.

\begin{table}[th]
 \begin{tabular}{l|ccc}
    \toprule
    Objects & Standard Scale & Flat scale & Proper scale \\
    \midrule
    Function $\bar t$
    & 0 & $\rho$ & $\bar t$ \\
    Schouten tensor $\schouten_{ij}$
    & $\mathfrak{P}_{ij}$ & 0 & $\schouten_{ij}$ \\
    Secondary structure tensor $\tau_{ij}$
    & $\aleph_{ij}$ & $\tau_{ij}$ & 0 \\
    Metric $g_{ij}$
    & $\mathfrak g_{ij}$ & $h_{ij}$ & $g_{ij}$ \\
    \bottomrule
 \end{tabular}
 \medskip
 
 \caption{Notation and conventions for the three natural scale choices.}\label{tab:scales}
\end{table}

\section{Conformal Killing tensors in conformally superintegrable systems}\label{sec:abundant}

%----------------------------------------------------------------------------%
\subsection{Prolongation equations for trace-free conformal Killing 
tensors}

In Section~\ref{sec:prolongation.potential} we have discussed a prolongation 
system for the potential $V$.
Similarly, we can write down a prolongation for an arbitrary trace-free 
conformal Killing tensor $C_{ij}$. In general this system can be rather 
complicated \cite{Weir1977}, given also the explicit but complicated 
expressions well known for proper second order Killing tensors 
in~\cite{Wolf98,Gover&Leistner}.
However, as is shown in~\cite{Kress&Schoebel&Vollmer}, the prolongation 
system for proper second order Killing tensors in non-degenerate 
superintegrable systems simplifies considerably. In fact, the prolongation 
system in this case closes after the first covariant derivative. We observe 
the same phenomenon with \emph{trace-free} conformal Killing tensors.
We remark that trace-freeness is paramount. Indeed, for conformal Killing 
tensors with non-vanishing trace, the prolongation system would not be finite.

\begin{theorem}\label{thm:prolongation.K}
 A trace-free conformal Killing tensor $C_{ij}$ in a non-degenerate 
 conformally superintegrable system satisfies
 \begin{equation}\label{eq:prolongation:K}
	 C_{ij,k}
	 =\frac13\,\left(
	    \young(ji,k)T\indices{^m_{ji}}g\indices{_k^n}
	    -\frac{2}{n}\,g_{ij}(t^mg\indices{^n_k}-T\indices{_k^{m n}})
      \right)\,C_{mn}\,,
 \end{equation}
 with the primary structure tensor $T\indices{_{ij}^k}$ given by the 
 Wilczynski equation~\eqref{eqn:Wilczynski.conformal}.
 The Bertrand-Darboux condition~\eqref{eqn:conformal.dKdV} in this situation 
 is equivalent to \eqref{eq:prolongation:K} and
 \begin{equation}\label{eq:prolongation:omega}
 	\young(j,k)\,\left( \omega_{j,k} + C\indices{^m_j}\tau_{km} \right) = 0\,.
 \end{equation}
\end{theorem}
\noindent Note that from~\eqref{eq:prolongation:K} we obtain $\omega_i$ using 
Formula~\eqref{eqn:omega.from.divC}. We also remark 
that~\eqref{eq:prolongation:K} does not contain the secondary structure 
tensor $\tau_{ij}$. Indeed, we shall see that, under the hypothesis of the 
theorem, the tensor~$\tau_{ij}$ is obtained from $T_{ijk}$ and the Ricci 
curvature.

\begin{remark}
	Equation~\eqref{eq:prolongation:K} should be compared to the prolongation 
	equation~\eqref{eqn:shortcut.eqn} for a Killing tensor in a properly 
	superintegrable system.
	However, here $K_{ij,k}$ is not trace-free. Therefore, we need to 
	subtract the trace, obtaining
	\begin{equation}\label{eqn:compare.properly.supint.K.trace}
		K\indices{^a_{a,k}}
		=\frac23\,\left(t^m g\indices{_k^n} - 
		T\indices{_k^{mn}}\right)\,K_{mn}\,.
	\end{equation}
	Next, verify that
	\[
	\left(
	\young(ji,k)T\indices{^m_{ji}}g\indices{_k^n}
	-\frac{2}{n}\,g_{ij}(t^mg\indices{^n_k}-T\indices{_k^{m n}})
	\right)\,g_{mn}
	= 0\,,
	\]
	which, combined with~\eqref{eqn:shortcut.eqn} 
	and~\eqref{eqn:compare.properly.supint.K.trace}, yields
	\[
	C_{ij,k}
	= \frac13\,\left(
	\young(ji,k)T\indices{^m_{ji}}g\indices{_k^n}
	-\frac{2}{n}\,g_{ij}(t^mg\indices{^n_k}-T\indices{_k^{m n}})
	\right)\,C_{mn}
	\]
	where $C_{ij}=K_{ij}-\frac{1}{n}\,g_{ij}\,K\indices{^a_a}$.
	Summarizing, we have thus confirmed that the trace-free part $C_{ij}$ of 
	a properly superintegrable Killing tensor $K_{ij}$ 	
	satisfies~\eqref{eq:prolongation:K}.
\end{remark}

\begin{proof}[Proof of Theorem~\ref{thm:prolongation.K}]
 We decompose $C_{ij,k}$ as
 \begin{equation}\label{eqn:Kijk.decomposition}
 	C_{ij,k}
 	= \frac13\,\young(ji,k) C_{ij,k}
 	+ \frac16\,\young(ijk) C_{ij,k}
 \end{equation}
 The totally symmetric component is given by the conformal Killing equation,
 \[
  \young(ijk) C_{ij,k} = \young(ijk) \omega_kg_{ij}
 \]
 The hook symmetric component is obtained as follows:
 Substituting the Wilczynski equation~\eqref{eqn:Wilczynski.conformal} into 
 the Bertrand-Darboux equation~\eqref{eqn:conformal.dKdV} gives
 \begin{equation}\label{eqn:dKdV.Kij,k}
  \young(j,k)
  \Bigl[
  \bigl(
	   C\indices{^m_{j,k}}
	   - T\indices{_{jl}^m}C\indices{^l_k}
	   +\omega_jg_k^m
  \bigr)V_{,m}
  +\bigl(
	    C\indices{^m_j}\tau_{km}+\omega_{j,k}
   \bigr)V
  \Bigr] = 0.
 \end{equation}
 From non-degeneracy it follows that the coefficients of $V_{,m}$ and $V$ 
 vanish independently. The coefficient of $V$ 
 yields~\eqref{eq:prolongation:omega}. From the coefficients of~$V_{,m}$ we 
 obtain
 \[
	 \young(j,k)C_{ij,k}=
	 \young(j,k)\bigl(
		 T\indices{^l_{ji}}C_{lk}
		 +g_{ij}\omega_k
	 \bigr).
 \]
 Altogether, using~\eqref{eqn:Kijk.decomposition},
 \begin{equation*} %\label{eq:prolongation:K.omega}
	C_{ij,k}
	= \frac13\young(ji,k)\bigl(
		 T\indices{^l_{ji}}C_{lk}
		 +\omega_kg_{ij}
	 \bigr)+\frac16\young(ijk)\omega_kg_{ij}\,
 \end{equation*}
 The trace-freeness of $C_{ij}$ now implies
 \begin{equation}\label{eqn:omega}
  \omega_{k}
  = -\frac{1}{3n}\,g^{ij} \young(ji,k)\,T\indices{^m_{ji}}C_{mk}
  = \frac{2}{3n}\,(T\indices{_k^{ab}}-t^ag\indices{^b_k})\,C_{ab}\,,
 \end{equation}
 which completes the proof.
\end{proof}

\noindent Equation~\eqref{eq:prolongation:omega} allows us to prove the 
converse of Lemma~\ref{la:proper.then.tau0} for non-degenerate systems.
Note that there is a natural mapping from the space $\mathcal K$ of Killing 
tensors into the space~$\mathcal C^\circ$ of trace-free conformal Killing 
tensors,
\[
	\mathcal K\to\mathcal C^\circ\,,\qquad
	K_{ij} \mapsto C_{ij}=K_{ij}-\frac1n\,K\indices{^a_a}g_{ij}\,.
\]
This map is not surjective as not every conformal Killing tensor arises from 
a proper Killing tensor. Its range consists of trace-free conformal Killing 
tensors whose $\omega$ from~\eqref{eqn:omega.from.divC} is exact, 
$\omega=d\lambda$, and thus
\[
	\lcb
		C\in\mathcal C^\circ : 
		2C\indices{^{a}_{k,a}}+C\indices{^a_{a,k}}=\lambda_{,k}
								\text{ for some scalar $\lambda$}
	\rcb \to \mathcal K\,,\quad
	C\mapsto C-\frac1n\,\lambda g\,,
\]
is surjective. It is not injective as we may always add a constant multiple 
of the metric to a Killing tensor.
From~\eqref{eq:prolongation:omega} we infer that $\omega$ is exact for 
trace-free conformal Killing tensors that commute with $\tau$.
%In particular, in \cite[Theorem 4.1.8]{Capel_phdthesis} it is shown that a 
%properly superintegrable system is obtained only after applying a St\"ackel 
%transformation to a conformally superintegrable system.

\begin{corollary}\label{cor:tau0.then.proper}
 If $\tau_{ij}=0$ for a non-degnerate second order conformally 
 superintegrable system, then the system is properly superintegrable.
\end{corollary}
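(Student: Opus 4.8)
The plan is to promote the conformal integrals $F^{(\alpha)}$ to genuine integrals of the motion for the \emph{same} Hamiltonian $H$, by subtracting from each a suitable position-dependent multiple of $H$. Such a correction is available precisely when the conformal Killing $1$-forms $\omega^{(\alpha)}$ attached to the $F^{(\alpha)}$ are exact, and the hypothesis $\tau_{ij}=0$ is exactly what forces this.

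First I would revisit Equation~\eqref{eqn:dKdV.Kij,k}, obtained in the proof of Theorem~\ref{thm:prolongation.K} by substituting the Wilczynski equation~\eqref{eqn:Wilczynski.conformal} into the Bertrand--Darboux condition~\eqref{eq:dKdV:ij} for a tracefree conformal Killing tensor $K=K^{(\alpha)}$ with associated $1$-form $\omega=\omega^{(\alpha)}$. By non-degeneracy (Definition~\ref{def:non-degenerate}) the coefficients of $V_{,m}$ and of $V$ in~\eqref{eqn:dKdV.Kij,k} must vanish separately; the coefficient of $V$ is the $j,k$-antisymmetrisation of $K\indices{^m_j}\tau_{km}+\omega_{j,k}$, which under $\tau_{ij}=0$ collapses to $\omega_{[j,k]}=0$, i.e.\ $d\omega^{(\alpha)}=0$. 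Hence, locally, $\omega^{(\alpha)}=d\mu^{(\alpha)}$ for some function $\mu^{(\alpha)}$, and since $\omega^{(0)}=0$ we may take $\mu^{(0)}=0$. (One could instead deduce closedness from~\eqref{eqn:omega} together with the prolongation~\eqref{eq:prolongation.K} and the differential identities for $T$, but that route is considerably more laborious.) Note in passing that, independently, $\tau_{ij}=0$ already turns~\eqref{eqn:Wilczynski.conformal} into the proper Wilczynski equation~\eqref{eqn:Wilczynski.proper}.

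Next I would set $\hat F^{(\alpha)}:=F^{(\alpha)}-c\,\mu^{(\alpha)}H$ for a suitable constant $c$. Since $\mu^{(\alpha)}$ is a function of position, $\{\mu^{(\alpha)},H\}$ only involves the principal part $g^{ij}p_ip_j$ of $H$, is therefore linear in the momenta, and equals a universal numerical multiple of $\omega^{(\alpha)}$; choosing $c$ to match that multiple gives $\{\hat F^{(\alpha)},H\}=\{F^{(\alpha)},H\}-c\,\{\mu^{(\alpha)},H\}\,H=\omega^{(\alpha)}H-\omega^{(\alpha)}H=0$, while $\hat F^{(0)}=H$. The quadratic part of $\hat F^{(\alpha)}$ is $\hat K^{(\alpha)}_{ij}=K^{(\alpha)}_{ij}-c\,\mu^{(\alpha)}g_{ij}$, a trace modification of the tracefree conformal Killing tensor $K^{(\alpha)}_{ij}$ and hence a genuine Killing tensor. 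Thus $H$ together with the integrals $\hat F^{(0)},\dots,\hat F^{(2n-2)}$ constitutes a properly superintegrable system; combined with Lemma~\ref{la:proper.then.tau0} this establishes, for non-degenerate second-order conformally superintegrable systems, the equivalence: $\tau_{ij}=0$ if and only if the system is properly superintegrable.

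The step I expect to require argument rather than computation is checking that the corrected integrals remain sufficiently independent. At the level of Killing tensors this is immediate: each correction alters $K^{(\alpha)}_{ij}$ only by a multiple of $g_{ij}$, and since the $K^{(\alpha)}_{ij}$ are tracefree and linearly independent, the collection $g_{ij},\hat K^{(1)}_{ij},\dots,\hat K^{(2n-2)}_{ij}$ remains linearly independent. For functional independence of the $\hat F^{(\alpha)}$ one argues as elsewhere in the paper, appealing to the standing genericity assumptions and using that $\hat F^{(\alpha)}\equiv F^{(\alpha)}$ modulo multiples of $H=\hat F^{(0)}$.
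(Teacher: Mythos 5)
Your proposal is correct and follows essentially the same route as the paper: non-degeneracy forces the coefficient of $V$ in \eqref{eqn:dKdV.Kij,k} to vanish, so $\tau_{ij}=0$ gives $d\omega^{(\alpha)}=0$, and one then trace-corrects using a local potential for $\omega^{(\alpha)}$. The only cosmetic difference is that you perform the correction on the full phase-space integral $F^{(\alpha)}\mapsto F^{(\alpha)}-c\,\mu^{(\alpha)}H$, whereas the paper corrects the Killing tensor directly, $K_{ij}\mapsto K_{ij}+\tfrac1n f g_{ij}$, and verifies the proper Bertrand--Darboux equation; these are the same modification seen at different levels.
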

\noindent Note that in the proof we do not take functional independence into 
account yet, but we will account for it in 
Lemma~\ref{la:functional.dependence}. This lemma ensures the existence of 
sufficiently many functionally independent conformal integals for almost any 
potential of a non-degenerate system, which suffices here as we consider the 
space $\mathcal V^\text{max}$.
\begin{proof}
 Let $C_{ij}$ be a trace-free conformal Killing tensor of the conformally 
 superintegrable system. We need to find a function $\lambda$ such that 
 $K_{ij}=C_{ij}+\frac1n\,g_{ij}\lambda$ is a proper Killing tensor, i.e.\ it 
 satisfies the Bertrand-Darboux condition~\eqref{eqn:Bertrand.Darboux}.
 We proceed in two steps. First we show that $d\omega=0$. Then we prove that this leads to a properly superintegrable system.
 
 For the first step, take the coefficient of $V$ in~\eqref{eqn:dKdV.Kij,k}. 
 For a non-degenerate system~\eqref{eq:prolongation:omega} yields
 \[
 	2\,d\omega_{ij} = \young(i,j)\,\omega_{i,j}
 	= \young(i,j)\,C\indices{^a_j}\tau_{ia}
 	= \young(i,j)\,K\indices{^a_j}\tau_{ia}
 \]
 and therefore $\omega$ is exact if $\tau_{ij}=0$, i.e.~$\omega=d\lambda$.
 Let $K_{ij}=C_{ij}+\frac1n\,g_{ij}\lambda$ with this specific 
 function~$\lambda$. We conclude, using the trace-freeness of $C_{ij}$,
 \begin{equation}\label{eq:BD.proper}
  \left( d(KdV) \right)_{ij}
  = \frac12\,\young(i,j)\,\left(
  			K_{ia,j}V^{,a}
  			+ K\indices{_i^a} V_{,aj}
  		\right)
  = 0\,,
 \end{equation}
 due to the conformal Bertrand-Darboux condition~\eqref{eqn:conformal.dKdV}.
 So $K_{ij}$ satisfies the proper Bertrand-Darboux 
 equation~\eqref{eqn:Bertrand.Darboux}.
 This proves the claim.
\end{proof}

\noindent To allow for a concise notation, we introduce the shorthand
\begin{equation}\label{eqn:P}
	P\indices{_{ijk}^{mn}}
	:=\frac16\young(mn)\left(
	\young(ji,k)T\indices{^m_{ji}}g\indices{_k^n}
	-\frac{2}{n}\,g_{ij}(t^mg\indices{^n_k}-T\indices{^m_k^n})
	\right)\,.
\end{equation}
Consequently, we have~\eqref{eq:prolongation:K} in the form
\[
	K_{ij,k} = P\indices{_{ijk}^{ab}}K_{ab}\,.
\]
Given $T_{ijk}$, we can compute $P\indices{_{ijk}^{mn}}$.
The following lemma shows that $P\indices{_{ijk}^{mn}}$ contains all the 
information about $\tau_{ij}$, i.e.\ for abundant systems the secondary 
structure tensor is redundant.

\begin{lemma}\label{la:tau}
In an abundant conformally superintegrable system, the tensor $\tau_{ij}$ is 
given by
\begin{equation}\label{eqn:tau}
 \tau_{ij} = \frac{2}{n}\,
 	\bigg(
		\Lambda\indices{^{a}_{ja,i}}
		-\Lambda\indices{_{ij}^a_{,a}}
		-\Lambda\indices{_i^{ab}}\,P\indices{_{ab}^c_{cj}}
		+\Lambda^{cab}\,P_{abijc}
    \bigg)
\end{equation}
where
\[
 \Lambda\indices{_k^{ab}} = 
 \frac{1}{3n}\,\young(ab)\,(T\indices{_k^{ab}}-t^ag\indices{^b_k})\,.
\]
Moreover,
\begin{equation}\label{eqn:strange.equation}
  \young(i,j)\young(mn)\,\left(
       \Lambda\indices{_{imn}_{,j}} + \Lambda\indices{_i^{ab}} 
       P\indices{_{abjmn}}
      \right)_\circ = 0\,.
\end{equation}
\end{lemma}

\noindent Note that $\tau_{ij}$ in~\eqref{eqn:tau} is symmetric and 
trace-free due to~\eqref{eqn:strange.equation}.
Because of Equation~\eqref{eqn:tau} the superintegrable 
potential is completely determined by the primary structure tensor 
$T\indices{_{ij}^k}$, and this observation can be interpreted as follows: Any 
conformally 
superintegrable system corresponds to a properly superintegrable system 
\cite{Capel_phdthesis}, for which the Wilczynski 
equation~\eqref{eqn:Wilczynski.conformal} holds with 
$\tau_{ij}=0$. Applying a conformal transformation, due to 
\eqref{eqn:transformation.tau} the tensor~$\tau_{ij}$ can only contain 
information from the properly superintegrable system (and the conformal 
factor).
Indeed, this is the information appearing on the right hand side of 
Equation~\eqref{eqn:tau}.

\begin{proof}[Proof of Lemma~\ref{la:tau}]
 Equation~\eqref{eq:prolongation:omega} yields the antisymmetric part of 
 $\omega_{i,j}$,
 \begin{equation}\label{eqn:d.omega}
 \young(i,j)\omega_{i,j}
 = \young(i,j)C\indices{^m_j}\tau_{im}\,.
 \end{equation}
 On the other hand we obtain from~\eqref{eqn:omega}, after one differentiation,
 \[
  \omega_{i,j} = \left(
       \Lambda\indices{_{i}^{mn}_{,j}} + \Lambda\indices{_i^{ab}} 
       P\indices{_{abj}^{mn}}
      \right)\,C_{mn}
 \]
 Resubstituting into~\eqref{eqn:d.omega},
 \[
  \young(i,j)\,\left(
       \Lambda\indices{_{i}^{mn}_{,j}}
       + \Lambda\indices{_i^{ab}} P\indices{_{abj}^{mn}}
       + \young(i,j)\tau\indices{_i^m}g\indices{^n_j} \right) C_{mn}
       = 0\,.
 \]
 Next, using the fact that there are $\frac{n(n+1)}{2}-1=r_\text{max}$ 
 linearly independent, trace-free and symmetric $C_{ij}$, we conclude that 
 the symmetrisation of the coefficients of $C_{mn}$ must vanish 
 independently,
 \begin{equation}\label{eqn:pre.tau.eqn}
  \young(i,j)\young(mn)\,\left(
       \Lambda\indices{_{imn}_{,j}} + \Lambda\indices{_i^{ab}} 
       P\indices{_{abjmn}}
       +\tau_{im}g_{nj}
      \right)=0\,.
 \end{equation}
 Contracting in $(n,j)$ yields~\eqref{eqn:tau}.
 Contracting~\eqref{eqn:pre.tau.eqn} in $(n,m)$ shows that~\eqref{eqn:tau} 
 is the only independent trace of~\eqref{eqn:pre.tau.eqn}.
 The trace-free part of~\eqref{eqn:pre.tau.eqn} 
 is~\eqref{eqn:strange.equation}, and this completes the proof.
\end{proof}

Lemma~\ref{la:tau} ensures that $P\indices{_{ijk}^{mn}}$ contains enough
information to recompute the structure tensors $T_{ijk}$ and $\tau_{ij}$.
\begin{corollary}
	In an abundant system, the structure tensors can be recomputed from 
	$P\indices{_{ijk}^{mn}}$ defined in~\eqref{eqn:P}.
	We have
	\begin{align*}
		t_{k} &= -\frac{3}{n}\,P\indices{_{abi}^{ab}}
		\\
		S_{ijk} &= \frac{3}{n}\,
		\left(
		P\indices{_{ijak}^{a}}
		+\frac{n-1}{n}\,g_{ij} P\indices{_{abk}^{ab}}
		+\young(ij)\frac{n-2}{n}g_{ik} P\indices{_{abj}^{ab}}
		\right)\,,
	\end{align*}
	which yield $T_{ijk}$, and~\eqref{eqn:tau}, which yields $\tau_{ij}$.
\end{corollary}
\begin{proof}
	This follows from
	$P\indices{_{abi}^{ab}} = -\frac{n}{3}\,t_i$,
	and
	\[
	P\indices{_{ijak}^{a}}
	= \frac{n}{3} S_{ijk} +\frac{n-1}{3}\,g_{ij}t_k 
	+\young(ij)\frac{n-2}{3}g_{ik}t_j\,.
	\]
	Together with Lemma~\ref{la:tau} the claim follows.
\end{proof}

%----------------------------------------------------------------------------%
\subsection{Integrability conditions in an abundant system}

A trace-free conformal Killing tensor in an abundant conformally 
superintegrable system satisfies the prolongation 
system~\eqref{eq:prolongation:K}.
Due to the condition of abundantness, its integrability condition will only 
depend on $g$, $T$ and $\nabla T$.
We have already seen that non-degeneracy is the condition for the generic 
integrability of $V$.
Along a similar line, abundantness is then the condition for the generic 
integrability of $K_{ij}$.

\begin{proposition}\label{prop:SIC:K}
	For the trace-free conformal Killing tensor fields in an abundant 
	(conformally) superintegrable system, the integrability condition 
	of~\eqref{eq:prolongation:K} reads
	\begin{equation}\label{eq:SIC:K}
		\young(k,l)
		\Bigl(
		P\indices{_{ijk}^{mn}_{,l}}
		+P\indices{_{ijk}^{pq}}P\indices{_{pql}^{mn}}
		\Bigr)
		=
		\frac12\young(ij)\young(mn)
		R\indices{^m_{ikl}}g^n_j\,.
	\end{equation}
\end{proposition}

\noindent Note that the integrability conditions~\eqref{eq:SIC:K} are not 
conformally invariant. This is entirely analogous to~\eqref{eq:SIC:V}, which 
are not invariant either. However, our further analysis is going to show that 
we can distill invariant conditions out of~\eqref{eq:SIC:K} and, as we shall 
see, these already imply~\eqref{eq:SIC:V}.

\begin{proof}
	Writing
	\begin{equation}
		\label{eq:prolongation:K:bis}
		C_{ij,k} = P\indices{_{ijk}^{mn}}C_{mn}
	\end{equation}
	and taking the covariant derivative yields
	\[
		C_{ij,kl}=
		P\indices{_{ijk}^{mn}_{,l}}C_{mn}
		+P\indices{_{ijk}^{mn}}C_{mn,l}\,.
	\]
	After antisymmetrisation over~$(k,l)$ we can eliminate all derivatives
	of~$C$ by using the Ricci identity
	\[
		\young(k,l)C_{ij,kl}=\young(ij)R\indices{^m_{ikl}}C_{mj}
	\]
	on the left hand side, and substituting~\eqref{eq:prolongation:K:bis} for
	$C_{mn,l}$ on the right hand side.
%	\begin{align*}
%		\young(ij)
%		R\indices{^m_{ikl}}g\indices{^n_j}K_{mn}
%		&=
%		\young(k,l)
%		\Bigl(
%			P\indices{_{ijk}^{mn}_{,l}}
%			+
%			P\indices{_{ijk}^{pq}}
%			P\indices{_{pql}^{mn}}
%		\Bigr)
%		K_{mn} \\
%		&=
%		\young(k,l)
%		\Bigl(
%			P\indices{_{ijk}^{mn}_{,l}}
%			+
%			P\indices{_{ijk}^{pq}}
%			P\indices{_{pql}^{mn}}
%		\Bigr)
%		K_{mn}\,.
%	\end{align*}
	We obtain
	\begin{equation}\label{eqn:K.integrability}
		\young(ij)
			R\indices{^m_{ikl}}g\indices{^n_j}C_{mn}
		=
		\young(k,l)
			\Bigl(
				P\indices{_{ijk}^{mn}_{,l}}
				+
				P\indices{_{ijk}^{pq}}
				P\indices{_{pql}^{mn}}
			\Bigr)
			C_{mn}\,.
	\end{equation}
	An abundant conformally superintegrable system has 
	$\frac{n(n+1)}{2}-1=r_\text{max}$ linearly 
	independent trace-free conformal Killing tensors $C$. Since this is 
	exactly the number of independent components of the trace-free symmetric 
	tensor $C_{mn}$, we can replace $C_{mn}$ by a symmetrisation in $m$ and 
	$n$	as the expression in parentheses in~\eqref{eqn:K.integrability} is
	already trace-free in $m$ and $n$. We have thus obtained the 
	claim.
\end{proof}

\begin{lemma}\label{la:curvature}
  For an abundant system, the curvature tensor $R\indices{^l_{ijk}}$ 
  satisfies
  \begin{equation}\label{eqn:curvature}
       R\indices{^l_{ijk}}
       = \frac{2}{n+2}\,\young(j,k)\,\left(
           P\indices{_{iaj}^{la}_{,k}} + P\indices{_{iaj}^{pq}} 
           P\indices{_{pqk}^{la}}
         \right)\,.
  \end{equation}
\end{lemma}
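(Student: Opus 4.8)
The plan is to obtain \eqref{eqn:curvature} as a single trace of the integrability condition \eqref{eq:SIC:K}. That equation already expresses the $(k,l)$-antisymmetrised, once-prolonged combination of $P\indices{_{ijk}^{mn}}$ in terms of a double symmetrisation of the Riemann tensor, so the curvature should reappear as soon as we contract a pair of indices that leaves $R\indices{^m_{ikl}}$ essentially intact while reducing the symmetriser on the right-hand side to an explicit numerical multiple of $R$.

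Concretely, I would contract \eqref{eq:SIC:K} over the index $j$ (the second slot of the lower triple of $P_{ijk}$) with the index $n$ (the second slot of the upper pair of $P^{mn}$). The antisymmetriser $\young(k,l)$ is untouched by this contraction, and on the left-hand side one is left, after the relabelling $m\to l$, $k\to j$, $l\to k$, with exactly the bracket
\[
 P\indices{_{iaj}^{la}_{,k}} + P\indices{_{iaj}^{pq}}P\indices{_{pqk}^{la}} - P\indices{_{iak}^{la}_{,j}} - P\indices{_{iak}^{pq}}P\indices{_{pqj}^{la}}
\]
that appears on the right-hand side of \eqref{eqn:curvature}, the two pairs of terms coming precisely from expanding $\young(k,l)$.

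For the right-hand side of \eqref{eq:SIC:K}, I would expand the double symmetriser,
\[
 \tfrac12\young(ij)\young(mn)R\indices{^m_{ikl}}g\indices{^n_j}
 = \tfrac12\bigl(R\indices{^m_{ikl}}g\indices{^n_j}+R\indices{^n_{ikl}}g\indices{^m_j}+R\indices{^m_{jkl}}g\indices{^n_i}+R\indices{^n_{jkl}}g\indices{^m_i}\bigr),
\]
and then set $n=j$ and sum. The four terms contribute $n\,R\indices{^m_{ikl}}$, $R\indices{^m_{ikl}}$, $R\indices{^m_{ikl}}$, and $R\indices{^a_{akl}}g\indices{^m_i}$ respectively, and the last vanishes because $R$ is antisymmetric in its first two indices. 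Hence the right-hand side collapses to $\tfrac{n+2}{2}R\indices{^m_{ikl}}$, and dividing by $\tfrac{n+2}{2}$ (and relabelling as above) gives \eqref{eqn:curvature}.

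The whole argument is routine index manipulation; the only point that needs care is the bookkeeping for the (un-normalised) Young (anti)symmetriser conventions used throughout, so that the prefactor $\tfrac12$ in \eqref{eq:SIC:K} combines correctly with the index count $n+2$ to produce the stated coefficient $\tfrac{2}{n+2}$. I would also note that the hypothesis of abundance is used only to derive \eqref{eq:SIC:K} in the first place (where the $\tfrac{n(n+1)}{2}$ linearly independent conformal Killing tensors allowed replacing $K_{mn}$ by its symmetrisation); no further appeal to abundance is needed in the present reduction.
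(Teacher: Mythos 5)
Your proposal is correct and is exactly the paper's argument: the paper's proof consists of the single sentence ``Contracting~\eqref{eq:SIC:K} in $n$ and $j$ immediately yields the result,'' and your computation simply fills in the details of that contraction, with the coefficient bookkeeping ($n+1+1$ surviving terms from the expanded symmetrisers, the fourth vanishing by antisymmetry of the Riemann tensor) carried out correctly.
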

\begin{proof}
  Contracting~\eqref{eq:SIC:K} in $n$ and $j$ immediately yields the result.
\end{proof}

\noindent Lemma~\ref{la:curvature} allows us to express the curvature in 
terms of the superintegrable structure tensor. Alternatively we can also view 
it as a curvature obstruction to the structure tensor. In any case, it 
enables us to (almost) eliminate the curvature from the integrability 
conditions.
\begin{lemma}\label{la:structural.equation}
An abundant conformally superintegrable system satisfies the curvature 
independent equation
 \begin{multline}\label{eqn:structural.equation}
 	\young(k,l)
	 	\Bigl(
		 	P\indices{_{ijk}^{mn}_{,l}}
		 	+P\indices{_{ijk}^{pq}}P\indices{_{pql}^{mn}}
	 	\Bigr)
 	=
	\young(ij)\young(mn)\young(k,l)
		\left(
           P\indices{_{iak}^{ma}_{,l}} + P\indices{_{iak}^{pq}} 
           P\indices{_{pql}^{ma}}
        \right)\,\frac{g^n_j}{n+1}\,.
 \end{multline}
% in addition to~\eqref{eqn:curvature}, \eqref{eqn:tau} and
% \eqref{eqn:strange.equation}.
\end{lemma}
The proof is straightforward.

%\noindent Equation~\eqref{eqn:structural.equation} is going to provide a 
%non-linear prolongation of $T_{ijk}$, along with curvature conditions for 
%abundant superintegrable systems.
%
By a tedious computation, the following is confirmed:
\begin{corollary}
	For an abundant system, \eqref{eqn:curvature} 
	and~\eqref{eqn:structural.equation} imply~\eqref{eq:SIC:V}.
\end{corollary}

\noindent Therefore, for abundant systems, the integrability conditions for 
the potential $V$, its trace-free conformal Killing tensors $C_{ij}$ and 
their respective scalar parts $W$ are equivalent to~\eqref{eqn:tau}, 
\eqref{eqn:strange.equation}, \eqref{eqn:curvature} 
and~\eqref{eqn:structural.equation}.

%----------------------------------------------------------------------------%
\subsection{Non-linear prolongation equations for the structure tensor}

We have found the prolongation~\eqref{eq:prolongation:V} for the potential 
and the prolongation~\eqref{eq:prolongation:K} for the trace-free conformal 
Killing tensors in a conformally superintegrable system. Now we show that 
these imply a third, non-linear prolongation for the structure tensor 
$T_{ijk}$, which expresses covariant derivatives of $T_{ijk}$ 
polynomially in terms of $T_{ijk}$ and the Ricci tensor $R_{ij}$.
\begin{proposition} %[The non-linear prolongation of $T_{ijk}$]
\label{prop:nonlinear.prolongation}
 For an abundant conformally superintegrable potential in dimension 
 $n\geqslant3$, 
 the primary structure tensor $T_{ijk}$, decomposed according 
 to~\eqref{eq:T(S,t)} as
 \[
  T_{ijk}
  = S_{ijk} + \young(ij)\left( \bar t_ig_{jk} - \frac 1ng_{ij}\bar t_k 
  \right)\,,
 \]
 satisfies the following non-linear prolongation:
 \begin{subequations}\label{eqn:nonlinear.prolongation}
 \begin{align}
  \nabla_i\bar{t}_j &=  \left(
		\tfrac{3}{(n-2)}\,R_{ij}
		+\tfrac{1}{3(n-2)}\,{S_{i}}^{ab}S_{jab}
		+\tfrac{1}{3}\,\bar{t}_i\bar{t}_j
		\right)_\circ
		+\frac{1}{n} g_{ij} \nabla^{a}\bar{t}_a
		\label{eqn:tij.bar} \\
  \nabla^a\bar{t}_a &= \tfrac{3}{2(n-1)}\,R
		+\tfrac{3n+2}{6(n-1)(n+2)}S^{abc}S_{abc}
		-\tfrac{(n-2)}{6}\,\bar{t}^a\bar{t}_a
		\label{eqn:taa.bar} \\
  \nabla_lS_{ijk} &= \tfrac{1}{18}\,{\young(ijk)}_\circ
		\biggl(
		  \,S\indices{_{i l}^{a}} S_{j k a}
		  +3\,S_{i j l} \bar{t}_{k}
		  +S_{i j k} \bar{t}_{l}
		    +\left(
			\tfrac{4}{n-2}\,S\indices{_j^{ab}} S_{kab}
			-3\,S_{j k a} \bar{t}^{a}
		     \right)\,g_{i l}
		\biggr)
		\label{eqn:Sijk.l}
 \end{align}
 \end{subequations}
\end{proposition}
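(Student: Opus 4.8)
The plan is to extract the non-linear prolongation of $T_{ijk}$ from the two integrability conditions already in hand, namely the potential integrability conditions \eqref{eq:SIC:V} (or equivalently the structural equation \eqref{eqn:structural.equation}, valid in an abundant system) together with the curvature formula \eqref{eqn:curvature}, and then to solve this coupled system for the covariant derivatives $\nabla_l T_{ijk}$ by decomposing everything into irreducible $\mathrm{GL}(n)$-components. Concretely, I would first substitute the decomposition \eqref{eq:T(S,t)}, $T_{ijk}=S_{ijk}+{\young(ij)}(\bar t_ig_{jk}-\tfrac1n g_{ij}\bar t_k)$, together with Corollary~\ref{cor:symmetry}(ii) ($\bar t_i=\bar t_{,i}$), into $P\indices{_{ijk}^{mn}}$ as given by \eqref{eqn:P}, and expand $P$ in terms of $S_{ijk}$, $\bar t_i$ and the metric. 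This is purely algebraic bookkeeping but is the bulk of the work.

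Next I would feed this expansion into the structural equation \eqref{eqn:structural.equation}. Since \eqref{eqn:structural.equation} is already curvature-free, both sides become expressions in $\nabla S$, $\nabla\bar t$, and quadratic terms in $(S,\bar t)$. I would then project this tensor identity onto its irreducible pieces: the totally symmetric tracefree part yields the $S$-gradient equation \eqref{eqn:Sijk.l} (after isolating $\nabla_l S_{ijk}$, whose only admissible free components under \eqref{eq:SIC:V:T}-type symmetry are the totally symmetric ones plus a hook-type piece already determined by \eqref{eqn:groomed.DS}/\eqref{eq:SIC:V:Q}); the pure-trace scalar part yields \eqref{eqn:taa.bar}; and the remaining tracefree symmetric rank-2 part — combined with the contracted curvature identity \eqref{eqn:curvature}, which expresses $R\indices{^l_{ijk}}$ and hence $R_{ij}$ in terms of $P$ and $\nabla P$ — yields \eqref{eqn:tij.bar}. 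The appearance of $R_{ij}$ on the right-hand side of \eqref{eqn:tij.bar} and $R$ in \eqref{eqn:taa.bar} is exactly the imprint of \eqref{eqn:curvature}: rather than eliminating the Ricci tensor entirely, one keeps it, since for abundant systems \eqref{eqn:curvature} lets $R_{ij}$ be traded back and forth with $\nabla\bar t$ and $S\!\cdot\!S$ terms. The trace structure of \eqref{eqn:tij.bar} (the $\tfrac1n g_{ij}\nabla^a\bar t_a$ term) and the scalar equation \eqref{eqn:taa.bar} must be mutually consistent, which provides a useful check.

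The main obstacle will be managing the representation-theoretic decomposition cleanly: the tensor $\nabla_l T_{ijk}$ lives in $T^*\!M\otimes(\text{rank-3 with }{\young(ij)}\text{-symmetry, tracefree in }ij)$, which decomposes into several $\mathrm{GL}(n)$-irreducibles ($\yng(3)$, $\yng(2,1)$, plus trace parts $\yng(1)$ appearing with multiplicities), and one must verify that the integrability conditions pin down precisely those components not already fixed by \eqref{eq:SIC:V:T}. Equivalently, the delicate point is showing that no component of $\nabla T$ is left \emph{undetermined} — i.e.\ that the system genuinely closes — which is where abundantness (the full $\tfrac{n(n+1)}{2}$-dimensional space of Killing tensors, used to pass from the $K_{mn}$-weighted identity to an identity on coefficients in \eqref{eq:SIC:K} and hence \eqref{eqn:structural.equation}) is essential. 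I expect the totally symmetric projection giving \eqref{eqn:Sijk.l} to be routine once $P$ is expanded, the scalar equation \eqref{eqn:taa.bar} to follow from a double contraction, and the rank-2 equation \eqref{eqn:tij.bar} to be the one requiring care, since it mixes $\nabla\bar t$, the $S\!\cdot\!S$ contraction, the quadratic $\bar t_i\bar t_j$ term, and the Ricci tensor simultaneously, and the numerical coefficients ($\tfrac{3}{n-2}$, $\tfrac{1}{3(n-2)}$, $\tfrac13$) must be tracked through several contractions. A symbolic computation (as acknowledged in the paper, using \texttt{cadabra2}) is the natural way to confirm the final coefficients, but the structure of the argument is as above: decompose $P$, substitute into \eqref{eqn:structural.equation} and \eqref{eqn:curvature}, and read off the irreducible components.
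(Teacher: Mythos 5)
Your proposal is correct and follows essentially the same route as the paper: expand $P\indices{_{ijk}^{mn}}$ via the decomposition \eqref{eq:T(S,t)}, use the structural equation \eqref{eqn:structural.equation} (whose validity rests on abundance) to isolate the tracefree and trace parts of $\nabla_lS_{ijk}$, and obtain \eqref{eqn:tij.bar} and \eqref{eqn:taa.bar} from the single and double contractions of the curvature identity \eqref{eqn:curvature}. The paper implements the irreducible decomposition concretely by taking the three independent traces $E^{(1)},E^{(2)},E^{(3)}$ of the difference tensor and a specific linear combination of them, but this is the same computation you describe, with the coefficients verified symbolically as you anticipate.
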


\begin{remark}
 Remarkably, the system~\eqref{eqn:nonlinear.prolongation} is the same as the 
 one found for \emph{properly} superintegrable systems in 
 \cite{Kress&Schoebel&Vollmer}, with the curvature term in 
 \eqref{eqn:tij.bar} replaced using \eqref{eqn:tau}. Here we leave the 
 curvature term, in order to not re-introduce $\tau_{ij}$ 
 into~\eqref{eqn:nonlinear.prolongation}.
\end{remark}

\begin{proof}[Proof of Proposition~\ref{prop:nonlinear.prolongation}]
 The proof is analogous to the corresponding result in 
 \cite{Kress&Schoebel&Vollmer}, but we provide more details here.
 Using \eqref{eqn:structural.equation}, define
 \begin{multline*}
	E\indices{_{ijk}^{mn}_l}
	= (n+1)\,\young(k,l)
	\Bigl(
	P\indices{_{ijk}^{mn}_{,l}}
	+P\indices{_{ijk}^{pq}}P\indices{_{pql}^{mn}}
	\Bigr)
	\\
	-\young(ij)\young(mn)\young(k,l)\,\left(
           P\indices{_{iak}^{ma}_{,l}} + P\indices{_{iak}^{pq}} 
           P\indices{_{pql}^{ma}}
        \right)g^n_j\,,
 \end{multline*}
 where we observe that $E\indices{_{ijk}^{mn}_l}$ is symmetric in $(i,j)$ and 
 in $(m,n)$, antisymmetric in $(k,l)$, and pure trace.
 % Note that $E\indices{_{ijk}^{mn}_l}=0$ is trace
 Its trace components are given by
 \begin{align*}
  E^{(1)}_{kmnl} &= E\indices{^a_{akmnl}}\,, &
  E^{(2)}_{ijkl} &= E\indices{^a_{ja}^{mn}_l}\,, &
  E^{(3)}_{ijnl} &= E\indices{_{ija}^a_{nl}}\,.
 \end{align*}
 These are not independent, satisfying the relation
 \[
	2(n-2)\lb E^{(2)}_{ijkl} - E^{(3)}_{kjil} \rb
	= (n^2-2n-2)\,E^{(1)}_{lkji}\,.
 \]
 This equation allows us to express the trace-free part of 
 $\nabla_{l}S_{ijk}$ in terms of $T_{ijk}$ and the metric, yielding the 
 trace-free part of~\eqref{eqn:Sijk.l}.
 The trace part $\nabla^{a}S_{ija}$ is obtained from 
 ${E^{(1)}}\indices{_{ija}^a}$, which implies
 \[
  \nabla^aS_{ija} = \tfrac{2n}{3(n-2)}\,{S_{i}}^{ab}S_{jab}
		-\tfrac{n}{3}\,S_{ija}\bar{t}^a
		-\tfrac{2}{3(n-2)}\,g_{ij}S^{abc}S_{abc}\,.
 \]
% We have thus expressed $\nabla_{l}S_{ijk}$ algebraically in terms of 
%$S_{ijk}$, $\bar t_i$ and $g_{ij}$.
 This confirms \eqref{eqn:Sijk.l}.
 The two other equations, \eqref{eqn:tij.bar} and~\eqref{eqn:taa.bar}, are 
 obtained from~\eqref{eqn:curvature} by contraction.
 Contracting~\eqref{eqn:curvature} in $i$ and $k$, we find \eqref{eqn:tij.bar}.
 Contracting~\eqref{eqn:curvature} further, in $(i,k)$ as well as $(j,l)$, we 
 find~\eqref{eqn:taa.bar}.
\end{proof}

\noindent Remarkably, the system~\eqref{eqn:nonlinear.prolongation} is 
already conformally invariant, which we show in detail 
Section~\ref{sec:invariant.nonlinear.prolongation}.
Indeed, in Equations~\eqref{eqn:tij.bar} and~\eqref{eqn:taa.bar} the terms 
involving $\bar{t}$ absorb the transformation behaviour of $R_{ij}$ 
under~\eqref{eqn:trafo.Ricci.curvature}. %\eqref{eqn:transformation.rules}

%----------------------------------------------------------------------------%
\subsection{The integrability conditions for abundant systems}
In addition to the integrability conditions for Killing tensors in abundant 
systems, we have two more equations, namely~\eqref{eqn:tau} 
and~\eqref{eqn:strange.equation}. Note that only~\eqref{eqn:tau} involves the 
secondary structure tensor $\tau_{ij}$, as it allows us to 
express~$\tau_{ij}$ in terms of the structure tensor $T_{ijk}$.

\begin{lemma}\label{la:Ricci.equation}~

\noindent (i) The non-linear prolongation~\eqref{eqn:nonlinear.prolongation} 
implies Equation~\eqref{eqn:strange.equation}.

\noindent (ii)
 For abundant systems in dimension $n\geqslant3$, the equations of the 
 non-linear prolongation~\eqref{eqn:nonlinear.prolongation}, together with
 \begin{equation}\label{eqn:Weyl.condition.abundant}
  W_{ijkl} = {\young(ik,jl)}^*_\circ S\indices{_{ik}^a}S_{jla} = 0\,,
 \end{equation}
 are equivalent to the integrability condition~\eqref{prop:SIC:K}.

\noindent (iii)
 With~\eqref{eqn:nonlinear.prolongation}, Equation~\eqref{eqn:tau} becomes
 \begin{equation}\label{eqn:Ricci.condition.strange.eqn}
  3\,(n-2)\,\left( \frac12\,\tau_{ij} + \mathring{\mathsf{P}}_{ij} \right)
  = \left(
          (n-2)(S_{ija}\bar{t}^a+\bar{t}_i\bar{t}_j)
	  - S\indices{_i^{ab}}S_{jab}
     \right)_\circ
 \end{equation}
\end{lemma}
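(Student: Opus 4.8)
The strategy is to treat the three claims essentially as algebraic bookkeeping built on top of the non-linear prolongation~\eqref{eqn:nonlinear.prolongation} and the two $\tau$-equations~\eqref{eqn:tau}, \eqref{eqn:strange.equation}. For part~(iii), the plan is to start from~\eqref{eqn:tau}, rewrite the right-hand side in terms of $S_{ijk}$, $\bar t_i$ and the curvature by expanding the definition $\Omega\indices{_k^{ab}}=\tfrac1{3n}\,{\young(ab)}(T\indices{_k^{ab}}-t^ag\indices{^b_k})$ and substituting $T_{ijk}=S_{ijk}+{\young(ij)}(\bar t_ig_{jk}-\tfrac1n g_{ij}\bar t_k)$. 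The derivative terms $\Omega\indices{\ldots}_{,k}$ produce $\nabla S$ and $\nabla\bar t$, which are then eliminated using~\eqref{eqn:Sijk.l}, \eqref{eqn:tij.bar} and~\eqref{eqn:taa.bar}; the Riemann-curvature contractions $P\,P$-terms collapse into $S\ast S$ and $\bar t\ast\bar t$ contributions plus a residual Ricci (equivalently Schouten) piece. After collecting terms and projecting onto the tracefree part — recalling from the remark after~\eqref{eqn:tau} that $\tau_{ij}$ is already tracefree — one should be left exactly with~\eqref{eqn:Ricci.condition.strange.eqn}. I expect the bulk of the work here to be organising the index gymnastics so that the ${\young(ij)}$-symmetrisations and trace removals in~\eqref{eqn:tau} line up with the tracefree projector ``$\circ$'' on the right; the computer-algebra remark in the acknowledgements suggests the authors verified this with \texttt{cadabra2}, and I would do the same.

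For part~(i), the plan is to observe that~\eqref{eqn:strange.equation} is, after contraction, precisely the equation that~\eqref{eqn:tau} was obtained from by tracing; so conversely, given~\eqref{eqn:tau} together with~\eqref{eqn:nonlinear.prolongation}, one differentiates~\eqref{eqn:tau} once, substitutes the prolongation relations to remove all $\nabla S$, $\nabla\bar t$ and $\nabla R$ terms, and checks that the antisymmetrised-and-symmetrised combination ${\young(i,j)}{\young(mn)}(\Omega\indices{_i^{mn}}_{,j}+\Omega\indices{_i^{ab}}P\indices{_{abj}^{mn}}+\tau\indices{_i^m}g\indices{^n_j})$ vanishes identically. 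Since every quantity appearing has now been expressed algebraically in $S$, $\bar t$, $g$ and $R_{ij}$, this reduces to verifying that a specific polynomial tensor identity holds — again a finite, if tedious, computation. The one subtlety is that one must also invoke the Bianchi-type constraint on $R_{ij}$ implicit in~\eqref{eqn:curvature} (or equivalently the symmetry of $q_{ij}$ from Corollary~\ref{cor:symmetry}) so that no spurious curvature term survives.

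For part~(ii), the plan is a two-way implication. From~\eqref{eqn:structural.equation} one extracts, exactly as in the proof of Proposition~\ref{prop:nonlinear.prolongation}, the three independent traces $E^{(1)},E^{(2)},E^{(3)}$; the combinations used there give~\eqref{eqn:nonlinear.prolongation}, and what remains — the fully tracefree part of $E\indices{_{ijk}^{mn}_l}$, which is not consumed by those trace relations — is an algebraic condition on $S\ast S$ that, after using the Ricci decomposition, is equivalent to $W_{ijkl}={\young(ik,jl)}^*_\circ S\indices{_{ik}^a}S_{jla}=0$; this is the content of~\eqref{eqn:Weyl.condition.abundant}. Conversely, \eqref{eqn:nonlinear.prolongation} together with~\eqref{eqn:Weyl.condition.abundant} lets one reconstruct $E\indices{_{ijk}^{mn}_l}$ term by term and confirm it vanishes, hence~\eqref{eqn:structural.equation} holds. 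The main obstacle throughout is purely organisational: keeping the Young-projector symmetrisations, the trace corrections, and the curvature substitutions consistent across all three parts so that the ``new'' equation~\eqref{eqn:Weyl.condition.abundant} emerges cleanly as the tracefree residue rather than getting tangled with the prolongation relations — which is precisely why a symbolic-algebra check is the sensible way to carry this out.
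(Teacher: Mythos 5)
Your proposal is correct and follows essentially the same route as the paper, whose own proof is just the instruction to resubstitute the non-linear prolongation \eqref{eqn:nonlinear.prolongation} into \eqref{eqn:tau} and \eqref{eqn:strange.equation} (observing that the latter then becomes pure trace, its trace being \eqref{eqn:tau}) and, for part (ii), to repeat the trace/tracefree splitting of $E\indices{_{ijk}^{mn}_l}$ exactly as in the properly superintegrable case of \cite{Kress&Schoebel&Vollmer}. The only small inaccuracy is in part (i): no differentiation of \eqref{eqn:tau} is needed, since \eqref{eqn:strange.equation} already contains only first derivatives of $\Omega\indices{_i^{mn}}$, i.e.\ of $T$, which the prolongation eliminates directly before the identity check you describe.
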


\begin{proof}
 For the first part simply resubstitute~\eqref{eqn:nonlinear.prolongation} 
 into~\eqref{eqn:strange.equation}.
 The proof of part (ii), namely of 
 Equation~\eqref{eqn:Weyl.condition.abundant}, is analogous to that of 
 Theorem 5.9 in \cite{Kress&Schoebel&Vollmer}.
 Finally, for part (iii), resubstitute~\eqref{eqn:nonlinear.prolongation} 
 into Equation~\eqref{eqn:tau}. We 
 obtain~\eqref{eqn:Ricci.condition.strange.eqn}.
\end{proof}

\noindent As an immediate consequence of \eqref{eqn:nonlinear.prolongation} 
and \eqref{eqn:Weyl.condition.abundant}, we obtain the following obstruction 
on the geometry underlying an abundant system.

\begin{corollary}\label{cor:abundant.is.conformally.flat}
 Abundant conformally superintegrable systems can only exist on conformally 
 flat manifolds.
\end{corollary}
\begin{proof}
It follows immediately from \eqref{eqn:Weyl.condition.abundant} that a 
conformally superintegrable system can only exist on a Weyl flat manifold. 
Therefore, for dimension $n\geqslant4$, they can exist only on conformally 
flat 
manifolds. In dimension~2, any metric is conformally flat.
We are therefore left with the case $n=3$.
Using standard scale, i.e.\ $\bar t=0$, Equation~\eqref{eqn:Sijk.l} yields 
that
\[
  S\indices{_i^{ab}}S_{jab} - \frac3{20}\,S^{abc}S_{abc}\,g_{ij}
\]
is a Codazzi tensor. The claim then follows from the Weyl-Schouten Theorem.
\end{proof}

%============================================================================%
\section{Equivalence classes of abundant superintegrable systems}
\label{sec:ccsis}

So far, we have considered conformally superintegrable systems whose 
underlying geometry is a \emph{\mbox{(pseudo-)}Riemannian manifold}. We now 
turn towards conformal equivalence classes, i.e.\ towards c-superintegrable 
systems on conformal manifolds. For such systems, $S_{ijk}$ is the 
conformally invariant structure tensor.
According to~\eqref{eqn:Ricci.condition.strange.eqn} or~\eqref{eqn:tau}, the 
secondary structure tensor $\tau_{ij}$ is determined by $T_{ijk}$ and the 
Ricci curvature.
Table~\ref{tab:analogy} contrasts the setting of properly and conformally 
superintegrable systems as opposed to c-superintegrable systems.

\begin{table}[ht]
	\centering
	\textbf{Comparison of abundant second order systems:\\
		Proper vs.\ conformal vs.\ c-superintegrability}
	\begin{tabular}{l|c|c|c}
		\toprule
		Type
		& proper
		& conformal
		& c-superintegrability \\
		& superintegrability
		& superintegrability
		& \\
		\midrule
		Geometry
		& pseudo-Riem.
		& pseudo-Riem.
		& conformal\\
		& metric & metric & metric \\
		& $g_{ij}$
		& $g_{ij}$
		& $\mathbf{g}_{ij}$ \\
		\midrule
%		Kinetic part of integrals
%		& Killing tensors
%		& \multicolumn{2}{c}{trace-free conformal Killing tensors} \\
%		\midrule
		Constants of motion
		& (proper) integrals
		& \multicolumn{2}{c}{conformal integrals} \\
		\midrule
%		Bertand-Darboux Eq.
%		& $d(K^{(\alpha)}dV)=0$
%		& \multicolumn{2}{c}{$d(K^{(\alpha)}dV)+Vd\omega+dV\wedge\omega=0$} \\
%		\midrule
		Primary
		& \multicolumn{2}{c|}{
			$T_{ijk}=S_{ijk}+{\young(ij)}_\circ \bar t_i g_{jk}$}
		& $S_{ijk}=\mathring T_{ijk}$
		\\
		structure tensor
		& \multicolumn{2}{c|}{~}
		&
		\\
		\midrule
		Secondary
		& $\tau_{ij}=0$
		& given by~\eqref{eqn:Ricci.condition.strange.eqn}
		& none (not 
		\\
		structure tensor
		&&& conformally invariant)
		\\
%		\multicolumn{2}{c}{$\tau_{ij}
%			=\frac23\,\bigg( (S_{ija}\bar t^a+\bar t_i\bar t_j)$} \\
%		&& \multicolumn{2}{l}{\hfill$
%			- \frac{1}{n-2}S\indices{_i^{ab}}S_{jab}\bigg)_\circ
%			- 2\mathring\schouten_{ij}$} \\
%		\midrule
%		Flat conformal scale
%		& \multicolumn{2}{|c|}{Not applicable}
%		& $\theta$ in~\eqref{eqn:shift} \\
		\bottomrule
	\end{tabular}
	\bigskip
	\caption{%
		Synopsis of the main objects in proper, conformal and 
		c-superintegrability.
	}
	\label{tab:analogy}
\end{table}

%----------------------------------------------------------------------------%
\subsection{Obstructions to the integrability 
of the non-linear prolongation~\eqref{eqn:nonlinear.prolongation}}
Consider the non-linear prolongation~\eqref{eqn:nonlinear.prolongation} of 
PDEs for $\bar t$ and $S_{ijk}$.
We now investigate the integrability conditions for this system.
The prolongation equations~\eqref{eqn:nonlinear.prolongation} are non-linear 
in the components of $S_{ijk}$ and $\bar t_i$. Therefore the Ricci conditions,
\begin{subequations}\label{eqn:riemann.conditions}
\begin{align}
	\young(l,m)\,\nabla_m\nabla_l S_{ijk}
	&= \young(ijk) R_{ialm} S\indices{^a_{jk}}
	\label{eqn:riemann.condition.S}
	\\
	\young(jk)\,\nabla_k\nabla_j \bar t_i
	&= R_{iajk} \bar t^a\,,
	\label{eqn:riemann.condition.t}
\end{align}
\end{subequations}
are necessary but need not be sufficient for the integrability of 
\eqref{eqn:nonlinear.prolongation}.
Sufficiency is guaranteed if not only the Ricci condition but also all of its 
differential consequences are satisfied in a given point~$x_0$ 
\cite{Goldschmidt1967}.
We find that the integrability conditions, of which a priori there can be 
infinitely many, reduce to a single algebraic equation of the 
form~\eqref{eqn:master}.

\begin{theorem}~
	
	\noindent(i) If Equation~\eqref{eqn:Weyl.condition.abundant} holds, then 
	the integrability conditions~\eqref{eqn:riemann.conditions} of 
	\eqref{eqn:nonlinear.prolongation} are satisfied.
	
	\noindent(ii) Let~$M$ be a conformally flat, pseudo-Riemannian manifold 
	of dimension $n\geqslant3$ and let $x_0\in M$ be a point on this manifold.
	Then any solution $\Psi_{ijk}=S_{ijk}(x_0)$ of~\eqref{eqn:master} 
	together with the arbitrary initial values $\bar t(x_0)$ and $\nabla\bar 
	t(x_0)$ can be extended, in a neighborhood of $x_0$, to solutions 
	$S_{ijk}(x)$ and $\bar t(x)$ of the non-linear 
	prolongation~\eqref{eqn:nonlinear.prolongation}.
\end{theorem}
\begin{proof}
(i) Since the integrability condition cannot depend on $\bar t$, we may 
w.l.o.g.\ perform a conformal transformation such that the transformed system 
is in standard scale, $\bar t=0$.
As a result, \eqref{eqn:tij.bar} and~\eqref{eqn:taa.bar} determine the Ricci 
curvature tensor in this scale. We shall comment on this after finishing the 
proof.

First, let us investigate the integrability condition for \eqref{eqn:Sijk.l}, 
which by virtue of the aforementioned transformation has turned 
into
\begin{equation}\label{eqn:Sijk.l.t=0}
	\nabla_lS_{ijk} 
	= \tfrac{1}{18}\,{\young(ijk)}_\circ
	\biggl(
		\,S\indices{_{i l}^{a}} S_{j k a}
		+ \tfrac{4}{n-2}\,S_{j}^{a b} S_{k a b}\,g_{i l}
	\biggr).
\end{equation}
Using~\eqref{eqn:tij.bar}, \eqref{eqn:taa.bar} and 
Weyl-flatness~\eqref{eqn:Weyl.condition.abundant}, we replace the 
Riemann curvature in~\eqref{eqn:riemann.condition.S} by a quadratic 
expression in $S$.
Due to~\eqref{eqn:Weyl.condition.abundant} in combination with the 
non-linear prolongation~\eqref{eqn:nonlinear.prolongation} 
and~\eqref{eqn:Ricci.condition.strange.eqn}, 
Equation~\eqref{eqn:riemann.condition.S} 
is equivalent to the conformally invariant condition
\[
 \young(j,k)\,\left(  S^{abc}S_{ija}S_{kbc} \right)_\circ = 0\,,
\]
which is confirmed to be an algebraic consequence 
of~\eqref{eqn:Weyl.condition.abundant} by way of  
contracting~\eqref{eqn:Weyl.condition.abundant} with $S$.
We have therefore established~\eqref{eqn:Weyl.condition.abundant} as the only 
first order integrability condition of~\eqref{eqn:Sijk.l}. By an analogous 
computation we then confirm that all Ricci identities
of~\eqref{eqn:nonlinear.prolongation} are satisfied 
if~\eqref{eqn:Weyl.condition.abundant} holds.
As explained earlier, however, the first order integrability 
conditions~\eqref{eqn:riemann.conditions} need 
not be sufficient for the integrability of~\eqref{eqn:nonlinear.prolongation}.
We now proceed to show their sufficiency.

(ii) 
%Since~\eqref{eqn:nonlinear.prolongation} is a non-linear prolongation for 
%$S_{ijk}$ and $\bar t$, it is not guaranteed 
%that~\eqref{eqn:Weyl.condition.abundant} is sufficient\todo{repetition} for 
%its integrability, given the components $T_{ijk}(x_0)$ as initial values.
In order to find sufficient, pointwise integrability criteria, higher order 
integrability conditions have to be taken into account. Concretely, all 
differential consequences of~\eqref{eqn:Weyl.condition.abundant} need to be 
satisfied in a fixed point $x_0$ in order to allow us to extend $S_{ijk}$ and 
$\bar t$ such that the extensions satisfy~\eqref{eqn:nonlinear.prolongation} 
in a neighborhood of $x_0$.
Taking a covariant derivative of \eqref{eqn:Weyl.condition.abundant} and 
replacing derivatives of $S_{ijk}$ by \eqref{eqn:Sijk.l.t=0}, we find an 
algebraic condition on $S_{ijk}$.
Using~\eqref{eqn:Weyl.condition.abundant} again, it can be verified that this 
second order condition is an algebraic consequence of the first order one, 
i.e.\ of~\eqref{eqn:Weyl.condition.abundant}. This concludes the proof.
\end{proof}

\noindent If instead of a pseudo-Riemannian manifold we consider a conformal 
manifold, we obtain the following statement.
\begin{corollary}\label{cor:extend.S}
	Consider a flat conformal manifold $(M,c)$ of 
	dimension~$n\geqslant3$. 
	Then a solution of~\eqref{eqn:master} can be extended to a totally 
	symmetric and trace-free tensor field $S_{ijk}(x)$ in a neighborhood of a 
	point $x_0\in M$ such that there exists $g\in c$ 
	and~\eqref{eqn:Sijk.l.t=0} holds where $\nabla$ is the 
	Levi-Civita connection of $g$.
%	\begin{equation}\label{eqn:Sijk.l.t=0}
%		\bar\nabla_lS_{ijk}
%		= \tfrac{1}{18}\,{\young(ijk)}_\circ
%		\biggl(
%		\,S\indices{_{i l}^{a}} S_{j k a}
%		+\tfrac{4}{n-2}\,S_{j}^{a b} S_{k a b}\,\bar g_{i l}
%		\biggr)
%	\end{equation}
%	where $\bar\nabla$ is the Levi-Civita connection of $\bar g$.
\end{corollary}
\begin{proof}
	Note that~\eqref{eqn:master} is an invariant condition.
	The statement follows from statement~(ii) in the theorem, making use of 
	the fact 
	that there exists a flat conformal scale choice, which removes the 
	curvature from the non-linear prolongation system 
	\eqref{eqn:nonlinear.prolongation}.
	If \eqref{eqn:master} holds in $x_0\in M$ for this case, the system is 
	integrable in a neighborhood of $x_0$. Note that the specific scale 
	choice is technical and irrelevant for the statement.
\end{proof}

\begin{corollary}\label{cor:no.Weyl.3D}
	In dimension $n=3$, \eqref{eqn:nonlinear.prolongation} can be integrated 
	for any $S_{ijk}(x_0)$.
\end{corollary}
\begin{proof}
	Note that \eqref{eqn:Weyl.condition.abundant} has Weyl symmetry and thus 
	vanishes trivially in dimension $n=3$.
\end{proof}

\begin{remark}
	The corollary is consistent with the existing literature.
	For dimension~3, \cite{Capel_phdthesis} finds that conformally 
	superintegrable systems lead to the irreducible representations 
	${\Yboxdim{7pt}\yng(3)}_\circ$ and $\Yboxdim{7pt}\yng(1)$ of the rotation 
	group.
	Specifically, these representations are of  dimension seven and three. 
	These correspond to our tensors $S_{ijk}$ and $\bar t_i$ (or $t_i$), 
	respectively.
	Moreover, this reference shows that every element of this $7$-dimensional 
	representation of the rotation group corresponds to at least one 
	superintegrable system on a $3$-dimensional conformally flat geometry, in 
	line with the statement of Corollary~\ref{cor:no.Weyl.3D}.
\end{remark}

%----------------------------------------------------------------------------%
\subsection{The conformal factors between standard scale, proper scale and 
flat scale.}\label{sec:shift}
In the standard scale $\bar t=0$, Equations~\eqref{eqn:tij.bar} 
and~\eqref{eqn:taa.bar} become algebraic conditions on the curvature tensor. 
In contrast, if a flat scale exists, exactly the curvature terms disappear. 
In an arbitrary scale choice we have the following formula for the curvature, 
in terms of the conformally invariant tensor $S_{ijk}$ and the conformal 
scale function $\cs$.

\begin{proposition}
	The Ricci tensor satisfies
	\begin{align}\label{eqn:Ricci.tensor}
		\mathrm{R}_{ij}
		&= -\frac19\,\left(
		S\indices{_i^{ab}}S_{jab}
		+\frac{n}{3(n+2)}\,g_{ij}S^{abc}S_{abc}
		\right)
		+ (n-2)\,\left(
		\frac{\mathring\nabla^2_{ij}\cs}{\cs}
		-\frac{4(n-1)}{n(n-2)^2}\,
		\frac{\Delta\cs^{\frac{n-2}2}}{\cs^{\frac{n-2}2}}\,g_{ij}
		\right)
	\end{align}
	where $\cs=\exp(-\frac13\bar t)$ is the conformal scaling from 
	Definition~\ref{def:scaling}.
\end{proposition}
%\begin{proposition}
%	The Einstein tensor satisfies
%	\begin{align}\label{eqn:Einstein.tensor}
%		\mathrm{Ein}_{ij}
%		%	&= \mathcal T_{ij}
%		%	+ \frac{n-2}3\,\bigg( \bar t_{,ij} - g_{ij}\nabla^a\bar t_a \bigg)
%		%	- \frac{n-2}9\,\bigg( \bar t_i\bar t_j + 
%		%\frac{n-3}2\,g_{ij}\,\bar 
%		%	t^a\bar t_a  \bigg)
%		%	\notag \\
%		&= \mathcal T_{ij}
%		+ (n-2)\,\bigg( \frac{\mathring\nabla^2_{ij}\cs}{\cs}
%		+ 
%		
%g_{ij}\,\frac{2(n-1)}{n(n-2)}\,\frac{\Delta\cs^{\frac{n-2}2}}{\cs^{\frac{n-2}2}}
%		\bigg)\,,
%	\end{align}
%	where the conformal scaling $\cs$ is as in Definition~\ref{def:scaling} 
%	and 
%	where $\mathcal T$ takes the role of a ``stress-energy-momentum'' tensor
%	\begin{equation}\label{eqn:stress.energy}
%		\mathcal T_{ij} =
%		-\frac19\,\left(
%		S\indices{_i^{ab}}S_{jab}
%		- \frac{3n-2}{2(n+2)}\,g_{ij}S^{abc}S_{abc}
%		\right)\,.
%	\end{equation}
%\end{proposition}
\begin{proof}
	Solve~\eqref{eqn:tij.bar} and~\eqref{eqn:taa.bar} for the Ricci tensor.
\end{proof}

\noindent Note that on the right hand side of~\eqref{eqn:Ricci.tensor}, the 
first term is invariant (up to a conformal factor), while the second term 
vanishes in standard scale.
In a flat scale, $R_{ijkl}=0$, and the left hand side vanishes.
The function $\bar t=\rho$ arising from the primary structure tensor  
measures `how far' the flat scale is from the standard scale. According 
to~\eqref{eq:flat.scale.metric}, the conformal scale factor between the two 
scales is
\begin{equation}
	\theta:=e^{-\frac13\rho}
\end{equation}
(note that now we transform from the scale defined by $h_{ij}$ back to the 
standard scale with metric~$\mathfrak g_{ij})$.
From~\eqref{eqn:Ricci.tensor} we see that the flat conformal scales $\theta$ 
are determined by $S_{ijk}$. The following lemma makes this explicit.
\begin{lemma}
	For any conformal class of abundant superintegrable systems there are 
	functions $\theta>0$ satisfying
	\begin{subequations}\label{eqn:shift}
		\begin{align}
			\label{eqn:shift.tracefree}
			\mathring\nabla^2_{ij}\theta
			&= -\frac{\left(
				S\indices{_i^{ab}}S_{jab}
				\right)_\circ}{9(n-2)}\,\theta
			\\ 
			\label{eqn:shift.trace}
			\Delta\theta^{1-\frac{n}{2}}
			&= \frac{(n-2)(3n+2)}{36(n-1)(n+2)}\,S^{abc}S_{abc}\,
			\theta^{1-\frac{n}{2}}\,.
		\end{align}
	\end{subequations}
	where $\mathring\nabla^2$ is the flat, trace-free Hessian and $\Delta$ 
	the flat Laplace operator, and where $S_{ijk}$ is the conformal structure 
	tensor in the flat scale.
\end{lemma}
\begin{proof}
	Let the function $\bar t=\rho$ be the trace of the structure tensor after 
	a conformal transformation to a flat scale. Then a conformal 
	transformation with $\Upsilon=-\frac{\nabla\rho}{\rho}$ takes us back to 
	the standard scale. Rewriting~\eqref{eqn:tij.bar} and~\eqref{eqn:taa.bar} 
	in terms of $\theta$, and decomposition of the result into its trace-free 
	and trace parts, confirms the claim.
\end{proof}
\noindent Note that, by Corollary~\ref{cor:abundant.is.conformally.flat}, 
solutions $\theta$ do exist. However, solutions are not unique. Indeed, a 
positive constant scalar multiple of $\theta$ is again a solution, and in 
general more solutions can exist. For instance, in Table~\ref{tab:3D} the 
systems III and V are both in flat scale, and they are conformally 
equivalent, see \cite{Capel_phdthesis}.
Two different solutions $\theta$ represent two different flat conformally 
superintegrable systems within the same conformal class. They share the same 
$\mathring T\indices{_{ij}^k}=S\indices{_{ij}^k}$ but the traces of $T_{ijk}$ 
will be different unless the $\bar t$ differ only by an additive constant.
In order to understand the space of solutions~$\theta$ better, let us 
study~\eqref{eqn:shift.tracefree} further, ignoring the additional 
constraint~\eqref{eqn:shift.trace} for a moment. We find:
\begin{lemma}
	Equation~\eqref{eqn:shift.tracefree} has the linear prolongation
	\begin{subequations}\label{eqn:prolongation.theta}
		\begin{align}
			\label{eqn:prolongation.theta.1}
			\nabla^2_{ij}\theta &=
			-\frac{(S\indices{_i^{ab}}S_{jab})_\circ}{9(n-2)}\,\theta
			+\frac1n\,g_{ij}\,\Delta\theta
			\\
			\label{eqn:prolongation.theta.2}
			(\Delta\theta)_{,k} &=
			\frac{n}{9(n-2)}\,S^{abc}S_{kab}\,\theta_{,c}
			+\frac{3n+2}{27(n-1)(n-2)}\,
			S\indices{_k^{ab}}S\indices{_a^{cd}}S_{bcd}\,\theta
		\end{align}
	\end{subequations}
	where $\nabla^2$ is the flat Hessian and $\Delta$ 
	the flat Laplace operator, and where $S_{ijk}$ is the conformal structure 
	tensor in the flat scale.
	The integrability conditions for~\eqref{eqn:prolongation.theta} are 
	equivalent to~\eqref{eqn:Weyl.condition.abundant}.
\end{lemma}
%\noindent We emphasise that $\theta$, in addition 
%to~\eqref{eqn:prolongation.theta}, satisfies~\eqref{eqn:shift.trace}.
\begin{proof}
	Equations~\eqref{eqn:prolongation.theta} are obtained in formal analogy 
	to~\eqref{eq:prolongation:V}, where formally $T\equiv0$.
	Its integrability conditions are satisfied due 
	to~\eqref{eqn:Weyl.condition.abundant},
	as~\eqref{eqn:prolongation.theta.1} is a special case 
	of~\eqref{eqn:tij.bar}.
\end{proof}

\noindent Equations~\eqref{eqn:prolongation.theta} are a linear prolongation 
system for $\theta$ and its coefficients are determined by $S_{ijk}$. 
Therefore the solutions $\theta$ lie in an (at most) $(n+2)$-dimensional 
linear space $\mathcal B$, determined by the values of 
$\Delta\theta,\nabla\theta$ and $\theta$ in a fixed point.
The additional constraint~\eqref{eqn:shift.trace} defines a quadric in 
$\mathcal B$.

%----------------------------------------------------------------------------%
\subsection{Classifying the conformal classes of conformally superintegrable 
systems}
In the previous section we have found algebraic integrability conditions 
whose form is the same for any conformally superintegrable system within a 
class (in the next section we reformulate them as equivariant conditions).
As initial data we need to specify $\Psi_{ijk}=S_{ijk}(x_0)$. For a 
conformally flat geometry we may choose a flat metric, which facilitates 
determining a solution for $S_{ijk}$.
In order to reconstruct an abundant conformally superintegrable system from 
the initial data $\Psi_{ijk}=S_{ijk}(x_0)$, we recall that an abundant 
superintegrable system requires a conformally flat metric $g=\phi^2\,h$ where 
$h$ is the flat metric.
Then we can use the following procedure.
\begin{enumerate}
	\item
	Let $\Psi_{ijk}=S_{ijk}(x_0)$ be the initial data given in a point $x_0$. 
	Assume that $\Psi_{ijk}$ solves the algebraic 
	condition~\eqref{eqn:master}. The $\Psi_{ijk}$ do not depend on $\phi$, 
	and if $\Psi_{ijk}$ are solutions then so are $k\Psi_{ijk}$ for $k\ne0$.
	\item
	We extend the initial data $\Psi_{ijk}$ to a solution in a neighborhood 
	of $x_0$ such that the non-linear 
	prolongation~\eqref{eqn:nonlinear.prolongation} holds.
	This is possible by virtue of Corollary~\ref{cor:extend.S}. For a 
	concrete computation we should choose some conformal scale, and the flat 
	scale is a reasonable choice. We then need to specify the initial data 
	$\nabla\rho(x_0)$ and $\rho(x_0)$ in addition to $\Psi_{ijk}$.
	\item
	This yields $T_{ijk}$ up to a conformal transformation. 
	Integrating the Wilczynski equation~\eqref{eqn:Wilczynski.conformal} for 
	$V$ in the specific scale given by $\bar t=\rho$, and computing 
	\smash{$\mathbf{v}=e^{\frac{2\rho}{3}}V$}, we find the conformally 
	invariant potential as an $(n+2)$-parameter family of densities of 
	weight~$-2$. This is the space~$\mathcal V^\text{max}$ The 
	space~$\mathcal C^\text{max}$ of conformal Killing tensors is similarly 
	obtained by integration of~\eqref{eq:prolongation:K}.
\end{enumerate}

\noindent Since all integrability conditions are satisfied generically, we 
find at least $\tfrac12\,n(n+1)$ many linearly independent conformal 
integrals.
We address their functional independence below in 
Lemma~\ref{la:functional.dependence}. 
\bigskip

\noindent The procedure just outlined allows one to reconstruct an abundant 
c-superintegrable system from the given initial data and the knowledge of the 
underlying conformal metric up to the choice of the potential from 
$\mathcal{V}^\text{max}$.
We recall Assumption~\ref{assumption:killing.space}, but remark that with 
Lemma~\ref{la:functional.dependence} below, we are able to restrict the 
space~$\mathcal C^\text{max}$ in order to obtain $2n-1$ \emph{functionally} 
independent conformal integrals.
Let us reinterpret the aforesaid in the light of classifying 
c-superintegrable systems.
In \cite[Theorem 6.4]{Kress&Schoebel&Vollmer} it is shown that the 
classification space for irreducible non-degenerate superintegrable
systems on a (pseudo-)Riemannian manifold $M$ with analytic metric is a 
quasi-projective subvariety $\mathcal U\subset G_{2n-1}(\mathcal K(M))$ in 
the Grassmannian of $(2n-1)$-dimensional 
subspaces in the space $\mathcal K(M)$ of Killing tensors on $M$.
%
%Denote by $\mathcal S$ the space of irreducible non-degenerate 
%superintegrable	systems.
%By the aforementioned, there is a bijection\todo{discuss}
%\[
%	\Phi : \mathcal S \mapsto \mathcal U\,.
%\]
%
%Now denote the space of irreducible non-degenerate c-superintegrable systems 
%by $\mathcal S'$.
Since any c-superintegrable system admits at least one 
system in proper scale, it follows that the classification space of 
irreducible non-degenerate c-superintegrable systems with analytic metric is 
the quotient
$\tilde{\mathcal{U}}=\mathcal U/\mathrm{Conf}(M)$.
For non-degenerate irreducible conformally superintegrable systems on 
analytic metrics, the classification space is a fibre bundle over 
$\tilde{\mathcal{U}}$.
\bigskip

\noindent The following lemma was proven for properly superintegrable systems 
in reference \cite{Kress&Schoebel&Vollmer}. We adapt it for conformal systems.
\begin{lemma}\label{la:functional.dependence}
	Let $C^{(\alpha)}$ be $2n-2$ linearly independent, trace-free conformal 
	Killing tensors satisfying the integrability conditions
	\eqref{eqn:K.integrability} for \eqref{eq:prolongation:K}, and
	\eqref{eq:SIC:V} for~\eqref{eq:prolongation:V}.  Then, in the linear 
	space $\mathcal{V}^\text{max}$ of solutions $V$ to 
	Equation~\eqref{eq:prolongation:V}, those $V$ that give rise to
	functionally dependent integrals are confined to an affine subspace of 
	$\mathcal{V}$ with non-empty complement.
\end{lemma}

\begin{proof}
	Suppose the integrals \eqref{eq:quadratic} were functionally
	dependent. Then there is a function $\varphi:\R^{2n-2}\to\mathbb 
	R$, non-zero in an open subset of its domain, such that
	\begin{equation}\label{eq:linear.independence.functional}
	\varphi(F^{(1)},\ldots,F^{(2n-2)})=0
	\end{equation}
	This implies the infinitesimal condition
	\begin{equation}\label{eq:linear.independence.infinitesimal}
		\sum_{\alpha=1}^{2n-2}
		\lambda_{(\alpha)} dF^{(\alpha)}
		=0,
	\end{equation}
	where
	\begin{align*}
		\lambda_{(\alpha)} %(\mathbf p,\mathbf x)
		&=\frac{\partial\varphi}{\partial F^{(\alpha)}}
		(F^{(\alpha)})&
		dF^{(\alpha)}
		&
		=\frac{\partial F^{(\alpha)}}{\partial x^k}dx^k
		+\frac{\partial F^{(\alpha)}}{\partial p^k}dp^k\,.
	\end{align*}
	By a direct computation we find
	\begin{align*}
		\frac{\partial F^{(\alpha)}}{\partial x^k}
		&= C^{(\alpha)}_{ij,k}p^ip^j+V^{(\alpha)}_{,k} &
		\frac{\partial F^{(\alpha)}}{\partial p^k}
		&= 2C^{(\alpha)}_{jk}p^j.
	\end{align*}
	Separating the components of \eqref{eq:linear.independence.infinitesimal} 
	and substituting~\eqref{eqn:dV.KdV}, we conclude
	\begin{subequations}\label{eq:independence}
		\begin{align}
			\label{eq:independence:dx}
			\sum_\alpha\lambda_{(\alpha)}\left(
				C^{(\alpha)}_{ij,k}p^ip^j
				+C^{(\alpha)}_{jk}V^{,j}
				+\omega^{(\alpha)}_kV
			\right)&=0\\
			\label{eq:independence:dp}
			\sum_\alpha\lambda_{(\alpha)} C^{(\alpha)}_{jk}p^j&=0.
		\end{align}
	\end{subequations}	
	Invoking \eqref{eq:prolongation:K}, we obtain
	%\eqref{eq:prolongation:K.omega} and 
	\begin{align*}
		C^{(\alpha)}_{ij,k}p^ip^j
%		&=\frac23
%		\left(
%			T\indices{^a_{ij}}C^{(\alpha)}_{ka}
%			- T\indices{^a_{kj}}C^{(\alpha)}_{ia}
%		\right)
%		p^ip^j
%		-\frac{2}{3n}\,\omega_k^{(\alpha)}\,p^cp_c
%		\\
		&=\frac23
		\left(
		T\indices{^a_{ij}}C^{(\alpha)}_{ka}
		- T\indices{^a_{kj}}C^{(\alpha)}_{ia}
		\right)
		p^ip^j
		-\frac{2}{3n}\,\left(
			T\indices{_k^{ab}}C_{ab}^{(\alpha)}
			-t^aC_{ak}^{(\alpha)}
		\right)\,p^cp_c\,.
	\end{align*}
	Multiplying with $\lambda_{(\alpha)}$ and summing over $\alpha$, we find, 
	using~\eqref{eq:independence:dp} and the decomposition~\eqref{eq:T(S,t)},
	\begin{align*}
		\sum_\alpha
		\lambda_{(\alpha)}
		C^{(\alpha)}_{ij,k}p^ip^j
%		&=\frac23
%		\sum_\alpha
%			\lambda_{(\alpha)}
%			\left(
%				T\indices{^a_{ij}}C^{(\alpha)}_{ka}
%				-\frac1n\,T\indices{_k^{ab}}g_{ij}C^{(\alpha)}_{ab}
%				+\frac1n\,t^aC^{(\alpha)}_{ak}g_{ij}
%			\right)\,p^ip^j
%		\\
		&=\frac23
		\sum_\alpha \lambda_{(\alpha)}
		\left(
			S\indices{^a_{ij}}C^{(\alpha)}_{ka}
			-\frac1n\,t^ag_{ij}C^{(\alpha)}_{ka}
			-\frac1n\,S\indices{_k^{ab}}C_{ab}^{(\alpha)}g_{ij}
		\right)\,p^ip^j\,.
	\end{align*}
	Substituting this back into \eqref{eq:independence:dx}, 
	invoking~\eqref{eqn:omega.from.divC}, and using again the 
	decomposition~\eqref{eq:T(S,t)}, we conclude
	\begin{equation}\label{eqn:kernel.Xi}
		C_{ab}\eta^{kab} = 0\,,
	\end{equation}
	where we use the abbreviations
	\[
		C_{ab}:=\sum_\alpha \lambda_{(\alpha)} C^{(\alpha)}_{ab}
	\]
	and
	\[
	  \eta^{kab}
	  =g^{ka}
	  \left(
		  S\indices{^b_{ij}}p^ip^j
		  -\frac{1}{n}t^bp^cp_c
		  +\frac32\,V^{,b}
		  -\frac{1}{n}\,\frac{(n+1)(n-2)}{(n-1)(n+2)}t^b\,V
	  \right)
	  +\frac{1}{n}\,S^{kab}\,(V-p^cp_c)\,.
	\]
	Note that $C(x_0)\not=0$.  Indeed, otherwise the $C^{(\alpha)}_{ab}(x_0)$ would
	be linearly dependent,
	\[
		\sum_{\alpha}k_{(\alpha)}C^{(\alpha)}_{ab}(x_0)=0,
		\qquad
		k_{(\alpha)}=\lambda_{(\alpha)}(x_0).
	\]
	Because of~\eqref{eq:prolongation:K} the derivatives of
	$C^{(\alpha)}_{ab}$ are linearly dependent at $x_0$, with the same
	constants $k_{(\alpha)}$.  Iterated application
	of~\eqref{eq:prolongation:K} to higher derivatives shows that the same is
	true for all higher derivatives. It readily follows that
	$\sum_\alpha k_{(\alpha)}C^{(\alpha)}=0$ everywhere, which contradicts the
	linear independence of the conformal Killing tensors $C^{(\alpha)}$.

	Now, for $x_0\in M$, consider the mapping $\Xi:T^{\otimes3}_{x_0}M\to 
	T_{x_0}M$, given by	contracting with $C_{ab}$,
	\[
		\Xi(\eta^{kab})
		= C_{ab}\eta^{kab}\,.
	\]
	By virtue of Equation~\eqref{eqn:kernel.Xi}, we conclude that for any 
	potential~$V\in\mathcal{V}^\text{max}$
	\[
		\eta^{kab}(x_0)\,\,\in\,\ker \Xi\,.
	\]
	Using the linearity of the kernel, we conclude further that
	\[
		\left[
			\frac32\,g^{kb}V^{,a}
			+\frac1n\,\left(
					S^{kab}
					-\frac{(n+1)(n-2)}{(n-1)(n+2)}g^{kb}t^a
				\right)\,V
		\right]_{x_0}
		\,\,\in\,\ker \Xi\,.
	\]
	Choosing $V(x_0)=0$, we obtain that
	\[
		C\indices{_k^a}V_{,a}(x_0)=0
	\]
	for any choice of $V_{,a}(x_0)$, contradicting that $C(x_0)\not=0$.
\end{proof}

\begin{theorem}
	Abundant conformally superintegrable Hamiltonians with their 
	$(n+2)$-parameter family of potentials, and identified under 
	conformal transformations, are classified by~\eqref{eqn:master}.
\end{theorem}
\begin{proof}
	An abundant conformally superintegrable Hamiltonian with its 
	$(n+2)$-dimensional space $\mathcal V^\text{max}$ of all compatible 
	potentials, can be recovered from $S_{ijk}$ up to a conformal 
	transformation of superintegrable systems and every abundant system 
	satisfies~\eqref{eqn:master}.
\end{proof}

%----------------------------------------------------------------------------%
\subsection{Invariant formulation of the non-linear prolongation equations}
\label{sec:invariant.nonlinear.prolongation}
In this section we express the non-linear prolongation 
equations~\eqref{eqn:nonlinear.prolongation} in a conformally invariant way.
\begin{proposition}~
	
	\noindent(i)
	Equation~\eqref{eqn:tij.bar} is equivalent to
	\begin{equation}\label{eqn:tf.hessian.D}
		\mathring{\mathbb{H}}_{ij}\cs = -\frac{1}{9(n-2)}\,\left( 
		S\indices{_i^{ab}}S_{jab} \right)_\circ\,\cs\,,
	\end{equation}
	where $\cs=e^{-\frac13\,\bar t}$ and where $\mathring{\mathbb{H}}$ is 
	the conformally invariant trace-free Hessian, defined by
	\[
		\mathring{\mathbb{H}}_{ij}
		= \left(\nabla^2_{ij}-\schouten_{ij}\right)_\circ.
	\]
	
	\noindent(ii)
	Equation~\eqref{eqn:taa.bar} is equivalent to
	\begin{equation}\label{eqn:laplace.D}
		\mathbb{L}\cs^{1-\frac{n}{2}}
		= -\frac29\,\frac{3n+2}{n+2}\,S^{abc}S_{abc}\,\cs^{1-\frac{n}{2}}\,,
	\end{equation}
	where $\mathbb{L}$ denotes the conformal Laplacian,
	\[
	\mathbb{L} = -4\,\frac{n-1}{n-2}\,\Delta + R\,.
	\]
	
	\noindent(iii)
	Equation~\eqref{eqn:Sijk.l} is equivalent to
	\begin{equation}\label{eqn:nabla.S}
		\nabla^{\bar{t}}_lS_{ijk}
		= \frac13\,{\young(ijk)}_\circ
		\left(
		S\indices{_{il}^a}S_{jka}
		-\frac{4}{n-2}\,g_{kl}\,S\indices{_i^{ab}}S_{jab}\,,
		\right)
	\end{equation}
	where $\nabla^{\bar t}$ is the conformally equivariant Weyl connection
	\cite{Weyl1918} defined by
	\begin{equation}\label{eqn:weyl.connection}
		\nabla^{\bar t}_i\alpha_j
		= \nabla_i\alpha_j
		- \frac{m+1}{3}\,\bar t_i\alpha_j
		- \frac13\,\bar t_j\alpha_i
		+ \frac13\,\bar t^a\alpha_ag_{ij}\,,
	\end{equation}
	for $\alpha_j$ of conformal weight $m$, i.e.\ 
	$\alpha_j\mapsto\Omega^m\alpha_j$ under conformal 
	transformations. Here we have $m=-\frac23$.
\end{proposition}
\begin{proof}
%(i)
%Equation~\eqref{eqn:tij.bar} is rewritten as
%\begin{equation*}
%	%\label{eqn:t.to.D.hessian}
%	(\nabla^2_{ij}\cs)_\circ
%	= \mathring{\mathsf{P}}_{ij}\,\cs
%	-\frac{1}{9(n-2)}\,\left( S\indices{_i^{ab}}S_{jab} \right)_\circ\,\cs
%\end{equation*}
%and becomes \eqref{eqn:tf.hessian.D} in terms of the invariant trace-free 
%Hessian.
%
%\noindent(ii)
%Differentiating $\tilde{\cs}=\cs^{1-\frac{n}{2}}$ and 
%inserting~\eqref{eqn:taa.bar}, we obtain
%\[
%\Delta\tilde{\cs} = \frac{(n-2)}{4(n-1)}\,R\tilde{\cs} + 
%\frac{n-2}{6}\,\frac{3n+2}{6(n-1)(n+2)}\,S^{abc}S_{abc}\,\tilde{\cs}\,,
%\]
%which is rewritten as~\eqref{eqn:laplace.D} in terms of the conformal 
%Laplacian.
%
%\noindent(iii) Applied to $S_{ijk}$, \eqref{eqn:weyl.connection} implies
Parts (i) and (ii) are straightforward.
For part (iii), apply~\eqref{eqn:weyl.connection} to~$S_{ijk}$,
\begin{equation*}
	\nabla^{\bar{t}}_lS_{ijk}
	= \nabla_lS_{ijk}
	- \frac{1}{18}\,{\young(ijk)}_\circ
	\biggl(
	3\,S_{i j l} \bar{t}_{k}
	+S_{i j k} \bar{t}_{l}
	-3\,g_{k l}\,S_{i j a}\bar{t}^{a}
	\biggr)\,.
\end{equation*}
A direct computation indeed confirms that $\nabla^{\bar t}$ is 
invariant under conformal changes up to multiplication by a factor: The 
replacement rules are $g\to\Omega^2g$ and $\bar t\to\bar t-3\ln|\Omega|$, as 
well as, respectively, $S_{ijk}\to\Omega^2 S_{ijk}$ or 
$\alpha_i\to\Omega^m\alpha_i$ with $m=-\frac23$.
\end{proof}

\noindent Two remarks are in place with regard to the above proposition.
First, note that the conformal weights required for the trace-free conformal 
Hessian and the conformal Laplacian are different, leading to different 
powers of the conformal scale function.
The second remark concerns the conformal invariance of the operators.
Note that, under conformal transformations with rescale function $\Omega$, we 
have
\[
\cs^{1-\frac{n}{2}}\to\Omega^{1-\tfrac{n}{2}}\,\cs^{1-\frac{n}{2}}\,,
\]
and the conformal invariance of $\mathbb{L}$ means
\[
\mathbb{L}\circ\Omega^{1-\frac{n}{2}} = 
\Omega^{-1-\frac{n}{2}}\circ\mathbb{L}\,,
\]
which is consistent as $S^{abc}S_{abc}\to\Omega^{-2}S^{abc}S_{abc}$.
Note that in the standard scale, i.e.\ for $\bar{t}=0$ resp.\ $\cs=1$, 
Equation~\eqref{eqn:laplace.D} is an expression for the scalar curvature in 
terms of $S_{ijk}$, and~\eqref{eqn:tf.hessian.D} for the Schouten tensor.

\begin{remark}
	To determine whether the non-linear 
	system~\eqref{eqn:nonlinear.prolongation} 
	is integrable, it is sufficient to know the invariants in 
	\eqref{eqn:tf.hessian.D}, \eqref{eqn:laplace.D} and \eqref{eqn:nabla.S} 
	as well as those in \eqref{eqn:prolongation.theta}, which are constructed 
	algebraically from $S$.
	These invariants are
	\begin{align*}
		A_{ijkl}
		&= {\young(ijk,l)}_\circ (S\indices{_{il}^a}S_{jka})
			+ {\young(ijkl)}_\circ (S\indices{_{il}^a}S_{jka})\,,
		\\
		B_k
		&= S\indices{_k^{ab}}S\indices{_a^{cd}}S_{bcd}\,,
		\\
		\Sigma_{ij}
		&= \young(ij) S\indices{_{i}^{ab}}S_{jab}\,.
	\end{align*}
	The last of these invariants has a nice geometric 
	interpretation.
	First, due to \eqref{eqn:tij.bar}, we have
	\[
		\mathring{\mathfrak{P}}_{ij}
		= \frac{1}{9(n-2)}\,\mathring\Sigma_{ij}\,,
	\]
	where the Schouten curvature in standard scale is denoted by 
	$\mathfrak{P}_{ij}$.
	The trace of the Schouten curvature satisfies
	\[
		\mathfrak{P}\indices{^a_a}
		= \frac{3n+2}{18(n-1)(n+2)}\,\Sigma\indices{^a_a}\,.
	\]
	The second invariant also has a geometric meaning. 
	It is easy to show that
	\begin{equation}\label{eqn:standard.scale.scalar.curvature.differential}
		\mathfrak{P}\indices{^a_{a,k}}
		%= \frac{(3n+2)\,\Sigma\indices{^a_{a,k}}{18(n-1)(n+2)}
		= -\frac{(3n+2)\,B_k}{27(n-2)(n-1)}\,,
	\end{equation}
	and therefore $B_k=0$ characterises the case when the standard scale 
	system has constant scalar curvature.
	We also observe that $B_k$ and $\Sigma\indices{^a_a}$ are not 
	(differentially) independent.
\end{remark}

%----------------------------------------------------------------------------%
\subsection{Properly superintegrable systems on constant curvature manifolds}
\label{sec:proper.constant.curvature}
In reference \cite{Kress&Schoebel&Vollmer} abundant properly 
superintegrable systems are studied. These systems satisfy $\tau_{ij}=0$ due 
to Lemma~\ref{la:proper.then.tau0} 
and thus \eqref{eqn:Ricci.condition.strange.eqn} becomes
\begin{equation}\label{eqn:Ricci.condition.ingrid}
 \left[
 	(n-2)(S_{ija}\bar{t}^a+\bar{t}_i\bar{t}_j) - S\indices{_i^{ab}}S_{jab}
 \right]_\circ
 = 3\,\mathring R_{ij}\,.
\end{equation}

\noindent In the case of a constant curvature metric, the right hand side of 
this 
equation vanishes. In \cite{Kress&Schoebel&Vollmer} the following equation is 
then proven.
\begin{lemma}
	On a manifold of constant sectional curvature, 
	\eqref{eqn:Ricci.condition.ingrid} together with the non-linear 
	prolongation~\eqref{eqn:nonlinear.prolongation} implies
	\begin{equation}\label{eqn:perfect.square.equation}
		S^{abc}S_{abc} - (n-1)(n+2)\,\bar t^a \bar t_a = 9 R\,.
	\end{equation}
\end{lemma}
%\begin{proof}
%	Using~\eqref{eqn:Ricci.condition.ingrid} 
%	and~\eqref{eqn:nonlinear.prolongation}, we obtain
%	\begin{equation}\label{eqn:pre.perfect.square}
%		0 = 9\nabla^a\mathring R_{ka}
%		= (n-2)\lb S\indices{_k^{ab}}S_{abc}\rb_\circ\bar t^c 
%		-\frac{3n+2}{n}\,S\indices{_k^{ab}}S_{acd}S\indices{_b^{cd}}
%	\end{equation}
%	Now perform a conformal transformation with factor 
%$\Omega=\exp(-\frac13\,\bar t)$ on  
%Equation~\eqref{eqn:standard.scale.scalar.curvature.differential}.
%	We solve this equation for $S\indices{_k^{ab}}S_{acd}S\indices{_b^{cd}}$ 
%and substitute the obtained formula back 
%into~\eqref{eqn:pre.perfect.square}. 
%We thus arrive at
%	\[
%		(S^{abc}S_{abc} - (n-1)(n+2)\,\bar t^a \bar t_a - 9 R)\bar t_k = 0\,.
%	\]
%	This proves the claim on the support of $\bar t_k$.
%	Thus consider a local neighborhood with $\bar t_k=0$.
%	Then the non-linear prolongation equations~\eqref{eqn:tij.bar} 
%and~\eqref{eqn:taa.bar} imply
%	\[
%		(S\indices{_i^{ab}}S_{jab})_\circ = 0\,,\qquad
%		S^{abc}S_{abc} = -\frac{9(n+2)}{3n+2}\,R\,.
%	\]
%	Therefore, the claim is proven if $R=0$.
%\end{proof}

\noindent Using the condition found in the lemma, we obtain the following 
conditions for St\"ackel equivalent properly superintegrable systems.

\begin{theorem}\label{thm:eigenvalue.equation}
 If~\eqref{eqn:perfect.square.equation} holds, then Equation~\eqref{eqn:taa.bar} becomes
 \begin{equation}\label{eqn:eigenvalue.equation}
   \Delta \cs^{n+2} = -2\,\frac{n+1}{n-1}\,R\,\cs^{n+2}
 \end{equation}
 where $\cs=e^{-\frac13\bar{t}}$ as in Definition~\ref{def:scaling}.
\end{theorem}
\noindent Note that Equation~\eqref{eqn:eigenvalue.equation} 
is~\eqref{eqn:Laplace.scale}.
\begin{proof}
 If~\eqref{eqn:perfect.square.equation} holds, then $S^{abc}S_{abc}$ can be eliminated from Equation~\eqref{eqn:taa.bar}, yielding
 \begin{equation*}
   \Delta\bar{t} = \frac{6(n+1)}{(n-1)(n+2)}\,R
   +\frac{n+2}{3}\,\bar{t}^{,a}\bar{t}_{,a}\,.
 \end{equation*}
 In terms of $\cs=e^{-\frac13\,\bar t}$, this rewrites as 
 \eqref{eqn:eigenvalue.equation}.
\end{proof}

\noindent For constant curvature spaces, Equation~\eqref{eqn:taa.bar} thus 
becomes a Laplace eigenvalue problem, and a power of the scale function~$\cs$ 
is an eigenfunction of $\Delta$.
For a flat manifold, \eqref{eqn:eigenvalue.equation} merely implies that 
$\cs^{n+2}$ is harmonic.
On the round sphere $\mathbb{S}^n\subset\R^{n+1}$, we have spherical 
harmonics with the quantum number $\mu=n+1$ satisfying
\begin{equation}\label{eqn:quantum.number}
	\mu\,(\mu+n-1) := 2\,\frac{n+1}{n-1}\,Rr^2 = 2\,n(n+1)\,,
\end{equation}
where the second equality follows from $R=\frac{n(n-1)}{r^2}$ with $r>0$ 
denoting the radius of the sphere.

A close connection between the Helmholtz-Laplace equations and conformal 
superintegrability has been found 
in~\cite{KMS2016,Kalnins&Kress&Miller&Post11}.
Such links also appear in the present paper, although in 
different context: 
Earlier we have seen that on conformally flat spaces we find a scalar 
function $\theta^{1-\frac{n}{2}}$ satisfying the generalised Helmholtz 
equation \eqref{eqn:shift.trace}.
Now we have found~\eqref{eqn:laplace.D}, which is a conformally invariant 
generalised Helmholtz equation.
In particular, in the case of proper superintegrability, the $(n+2)$-nd power 
of the conformal scale function satisfies the generalised Helmholtz equation 
\eqref{eqn:eigenvalue.equation}.
It is a proper Helmholtz equation in the case of constant scalar curvature.

We now use Equation~\eqref{eqn:eigenvalue.equation} to study conformally 
equivalent properly superintegrable systems further.

\begin{proposition}\label{prop:flattenable}
 Assume we are provided with an abundant second order properly 
 superintegrable system on the sphere with the round metric $g$, which is 
 conformally equivalent to a properly superintegrable system on flat space 
 with the flat metric $h=\Omega^{-2}g$.
 Then the conformal factor $\cs$ on the sphere has to satisfy
 \begin{equation}\label{eqn:flattenable}
   \Omega\,\left(\Delta\Omega-g(d\ln(\cs^{n+2}),d\Omega)\right) + \,g(d\Omega,d\Omega) = 0\,.
 \end{equation}
\end{proposition}
\noindent Note that $\Omega$ is the conformal factor mediating between the 
standard and the spherical scale, while~$\cs$ mediates between the spherical 
and the flat scale.
\begin{proof}
 Due to~\eqref{eqn:eigenvalue.equation}, a properly superintegrable system on 
 flat space must satisfy the condition 
 $\Delta_\text{flat}(\Omega^{-(n+2)}\cs^{n+2})=0$.
 A direct computation using~\eqref{eqn:eigenvalue.equation} then shows
 \begin{equation*}
 	\Delta_\text{flat}(\Omega^{-(n+2)}\cs^{n+2})
 	= (3n+2)\,\cs^{n+2}\Omega^{-(n+6)}\,\bigg[
 	\Omega\Delta\Omega
 	+ \Omega^{,a}\Omega_{,a}
 	- \Omega\Omega^{,a}\,(\ln\cs^{n+2})_{,a}
 	\bigg]\,,
 \end{equation*}
 taking into account that
 \[
	 R = -2\frac{n-1}{\Omega^2}\,\left(
	 \Omega\Delta\Omega
	 -\frac{n}{2}\,\Omega^{,a}\Omega_{,a}
	 \right)\,.
 \]
 due to~\eqref{eqn:trafo.Ricci.curvature}.
 The desired condition now follows from~\eqref{eqn:eigenvalue.equation} 
 after a conformal transformation via~$\Omega$.
% With $\Upsilon=-\ln\Omega$, we compute
% \begin{align*}
%  \Delta_\text{flat}(\Omega^{-(n+2)}\cs^{n+2})
%  &= \Omega^{-1}\left( \Delta(\Omega^{-(n+2)}\cs^{n+2}) + 
%  (n-2)\Upsilon^{,c}(\Omega^{-(n+2)}\cs^{n+2})_{,c} \right)
%  \\
%  &= (3n+2)\,\cs^{n+2}\Omega^{-(n+6)}\,\bigg[
%                    \Omega\Delta\Omega
%                    + \Omega^{,a}\Omega_{,a}
%                    - \Omega\Omega^{,a}\,(\ln\cs^{n+2})_{,a}
%                \bigg]
% \end{align*}
% where we use \eqref{eqn:eigenvalue.equation}.
\end{proof}

The following example employs condition~\eqref{eqn:flattenable} to show that 
there is no conformal transformation that takes the generic system on the 
$n$-sphere to a properly superintegrable system on flat space. Note that 
there is always a conformal transformation taking it to a conformally 
superintegrable system on flat space.

The example thereby generalises a result shown in 
\cite{Kalnins&Kress&Miller-IV}, which addresses the specific case of 
dimension~$3$, see also \cite{Capel_phdthesis}.
Note that the proof presented here is a relatively simple exercise, while 
with traditional methods the claim, if at all, cannot be obtained for 
arbitrary dimension in a straightforward fashion.\footnote{%
	We remark that the generic system on the $n$-sphere can be transformed 
	into a proper superintegrable systems on flat space using B\^ocher 
	transformations or orbit degenerations~\cite{Capel_phdthesis}.
	Opposed to conformal transformations, however, these are not equivalence 
	relations on conformally superintegrable systems.}

\begin{example}[Generic system on the $n$-sphere]\label{ex:generic.n.sphere}
	Consider the generic system on the $n$-sphere, with $n\geqslant3$. It has 
	already been introduced for dimension~3 in 
	Example~\ref{ex:generic.3.sphere}. In arbitrary dimension we have the 
	metric
	\[
		g = \sum_{m=1}^{n}
				\left(\prod_{k=2}^{m}\sin^2(\phi_{k-1})\right)
				d\phi_m^2
	\]
	with angular coordinates $\phi_1,\dots,\phi_n$.
	The superintegrable potential defining the generic system is
	\[
		V	= a_0
			+ \sum_{m=1}^{n} \left(
					\frac{a_m}{	\cos^2(\phi_m)
								\prod_{k=2}^{m} \sin^2(\phi_{k-1}) }
			\right)
			+ \frac{a_{n+1}}{\prod_{k=1}^n \sin^2(\phi_k)}
	\]
	For this system, $\cs^{n+2}$ satisfies the Laplace 
	eigen-equation with quantum number $n+1$,
	\[
		\Delta\cs^{n+2} = -2n(n+1)\cs^{n+2}\,.
	\]
	Solutions of this equation span a vector space of dimension 
	$(n+2)^2$ whose basis is given by hyperspherical harmonics \cite{Fryant} 
	or one of the bases in \cite{LR_2003,LR_2004}.
	Concretely, for the generic system, we have
	\[
		\cs^{n+2} = \prod_{k=1}^n \cos(\phi_k)\sin^{n-k+1}(\phi_k)\,,
	\]
	which does \emph{not} satisfy \eqref{eqn:flattenable}.
\end{example}

Since the generic system on the $n$-sphere does not satisfy 
\eqref{eqn:flattenable}, we have proven the following.
\begin{theorem}\label{thm:generic.system}
	The generic system on the $n$-sphere is not conformally equivalent to a 
	properly superintegrable system on flat space.
\end{theorem}

The next example illustrates further how the presented framework can be 
invoked for proving statements in arbitrary dimension with ease and in a 
rigorous manner.
\begin{example}
	A non-degenerate properly superintegrable system on the $n$-sphere cannot 
	be conformally equivalent to the harmonic oscillator.
	Indeed, the harmonic oscillator has a vanishing structure tensor, 
	$T_{ijk}=0$. Therefore the system has to satisfy $S_{ijk}=0$, because 
	$S_{ijk}$ is conformally invariant. Moreover, being proper, the system on 
	the sphere satisfies~\eqref{eqn:tf.hessian.D},~\eqref{eqn:laplace.D} 
	and~\eqref{eqn:eigenvalue.equation}, i.e.
	\[
		\mathring\nabla^2_{ij}\cs = 0\,,\quad
		\Delta\cs^{1-\frac{n}{2}} = 0\,,\quad
		\Delta\cs^{n+2} = -2\,\frac{n+1}{n-1}\,R\cs^{n+2}\ne0\,,
	\]
	where $\mathring\nabla^2$ and $\Delta$ are the trace-free Hessian and the 
	Laplace-Beltrami operator on the sphere of constant scalar curvature 
	$R\ne0$.
	This system does not admit a solution.
\end{example}

\noindent We continue our study of pairs of conformally equivalent, 
\emph{properly} superintegrable systems on manifolds of constant curvature.
\begin{definition}
	We say that a c-superintegrable system is \emph{basic} if it 
	contains a member system that is an abundant properly superintegrable 
	system on a manifold of constant curvature.
\end{definition}

\noindent In reference \cite{Kress&Schoebel&Vollmer}, it is proven that the 
structure tensor of an abundant second order properly superintegrable system 
on a constant curvature manifold of dimension $n\geqslant3$ satisfies
\begin{equation}\label{eqn:B}
	T_{ijk}
	= \frac16\,{\young(ijk)}_\circ B_{,ijk}
	+ {\young(ij)}_\circ\,\frac{1}{n+2}\,g_{ik}\,\left(
	(\Delta B)_{,j} +\frac{2(n+1)}{n(n-1)}\,RB_{,j}
	\right)\,.
\end{equation}
where $B$ is a scalar function, called its \emph{structure function}.
The following theorem allows us to extend this definition of the 
structure function $B$ to any basic c-superintegrable system.

\begin{theorem}\label{thm:B.transformation}
 Consider two manifolds of constant curvature and with properly 
 superintegrable systems that are conformally equivalent.
 Denote their metrics by $g$ and $\tilde g=\Omega^2g$. 
 Assume the superintegrable systems have structure functions $B$ and $\tilde 
 B$, respectively, c.f.~\eqref{eqn:B}.
 Then
 \begin{equation}\label{eqn:B.hat.to.B}
  \tilde B = \Omega^2 B\quad\text{modulo gauge transformations}\,.
 \end{equation}
\end{theorem}
\begin{proof}
 On a manifold with Hamiltonian $H=g^{ij}p_ip_j+V$, we infer from 
 \cite{Kress&Schoebel&Vollmer} the following formula for $B_{ijk}$ in an 
 abundant constant-curvature system,
 \begin{equation*}
  B_{ijk} = T_{ijk} +\frac{n+2}{n}\,g_{ij} \bar t_{,k} + \frac{1}{2(n-2)}\,\young(ijk)\,g_{ij}C_{,k}
 \end{equation*}
 where
 \begin{equation}\label{eq:C}
  C = \frac{n-2}{n+2}\,\Delta B + \frac{2(n+1)}{n(n-1)}\,RB - (n-2)\bar t
 \end{equation}
 and
 \begin{equation*}
	 B_{ijk} = \frac16\young(ijk)\left(
	 							B_{,ij}+\frac{4R}{n(n-1)}g_{ij}B
	 						\right)_{,k}
 \end{equation*}
 up to an irrelevant constant.
 By virtue of \eqref{eqn:transformation.rules}, we know the transformation behavior of $S_{ijk}$ and $\bar t_i=\bar t_{,i}$,
 \[
  S_{ijk} \mapsto \tilde{S}_{ijk}=\Omega^2S_{ijk}\qquad\text{and}\qquad
  \bar t_{,i} \mapsto \bar t_{,i} -3\Omega^{-1}\Omega_{,i}\,,
 \]
 and therefore that of $B_{ijk}$,
 \begin{equation}\label{eqn:til.B}
  \tilde B_{ijk} = \Omega^2 \left( B_{ijk} + \text{ trace terms} \right)\,.
 \end{equation}
 Secondly, we also know by construction that $S_{ijk}={\young(ijk)}_\circ B_{,ijk}$, where on the right hand side we recall that comma denotes the covariant derivative. An analogous equation holds for $\tilde S_{ijk}$ with a function $\tilde B$.
 Now, let us denote by $\nabla$ and $\tilde\nabla$ the Levi-Civita connections of $g$ and $\tilde g$, respectively.
 Then, for the third derivatives,
 \begin{equation}\label{eq:trafo.B.function}
  {\young(ijk)}_\circ \tilde\nabla^3_{ijk}\tilde B = \Omega^2\,{\young(ijk)}_\circ \nabla^3_{ijk}B
 \end{equation}
 because of the invariance of $S_{ijk}$.
 A straightforward computation verifies that $\tilde B=\Omega^2B$ satisfies 
 \eqref{eq:trafo.B.function}.
 We have therefore confirmed that $\bar{\tilde{t}}=\bar t-3\ln|\Omega|$ and 
 $\tilde B=\Omega^2B$ yield the correct structure tensors $\tilde S_{ijk}$ 
 and~$\bar{\tilde{t}}_{,i}$ for the conformally transformed manifold with 
 Hamiltonian $\tilde H=\Omega^{-2}H$.
 Since the structure functions are unique up to gauge transformations, this 
 concludes the proof.
\end{proof}

%Note that we did not need to know explicitly how the structure function $C$ 
%transforms since due to \eqref{eq:C} it can be reconstructed from $\bar t$ 
%and $B$. In other words, for dimension $n\geqslant3$, the pair of structure 
%functions $(B,\bar t)$ is as good as the pair of structure functions 
%$(B,C)$. Which convention is more suitable depends on the context.\medskip

\noindent The theorem allows us to extend the definition of the structure 
function $B$ as follows: In \cite{Kress&Schoebel&Vollmer} the structure 
function $B$ is defined for abundant properly superintegrable systems on 
constant curvature spaces. We are now able to define a corresponding object 
for any c-superintegrable system arising from such systems.
\begin{corollary}\label{cor:b.density}
 Abundant second order properly superintegrable systems on constant curvature 
 spaces in dimension $n\geqslant3$ are St\"ackel equivalent if and only if 
 their densities $\mathbf{b}\in\mathcal{E}[-2]$ given by
 \begin{equation*}
 	\mathbf{b} = B\,\det(g)^{\frac1n}\,.
 \end{equation*}
 coincide up to a gauge transformation.
\end{corollary}

%\noindent The conformal scale function, on the other hand, is obtained from 
%$B$ via
%\begin{equation*}
% \cs = \eta\exp\left[-\frac1{3(n+2)}\,\left(
%                (\Delta B)_{,j} +\frac{2(n+1)}{n(n-1)}\,RB_{,j}
%            \right)\right]\,,
%\end{equation*}
%up to a constant $\eta>0$.
%%
%Note that we may ignore the constant $\eta$, setting $\eta=1$, without 
%loosing generality, in line with the discussion in 
%Section~\ref{sec:projective.V.space.proper}.

\begin{example}\label{ex:III.V}
 It is well understood that on 3-dimensional flat space the systems III and V are equivalent. There is no St\"ackel equivalent system of this class on the 3-sphere.
 In common coordinates $(x,z,\bar z)$, we may write
 \begin{align*}
 	g_\text{III} &= \frac{dx^2+dzd\bar z}{z^2}
 	&
 	V_\text{III} &= \omega\,z^2(4x^2+z\bar z) + a_1\,xz^2 + a_2\,z^2 + \frac{a_3\bar z}{z} +a_4
 	\\
 	g_\text{V} &= dx^2+dzd\bar z
 	&
 	V_\text{V} &= \omega\,(4x^2+z\bar z) + a_1x + \frac{a_4}{z^2} + \frac{a_3\bar z}{z^3} + a_2
 \end{align*}
 for which we find
 \[
   B_\text{III} = -\frac32\,\frac{\ln(z)\bar z}{z}\,,\qquad
   B_\text{V} = -\frac32\,\ln(z)z\bar z\,.
 \]
 The density $\mathbf{b}$ shared by these structure functions is
 \[
  \mathbf{b} = -3\ln(z)z\bar z\,.
 \]
 There is no properly superintegrable system conformally equivalent to those 
 obtained from $g_\text{III}$ and $V_\text{III}$ (or equivalently 
 $g_\text{V}$ and $V_\text{V}$). In order to confirm this, note that $Vg$ is 
 invariant, since $V$ transforms with $\Omega^{-2}$ and $g$ transforms with 
 $\Omega^2$. Indeed, the metric
 \[
   \hat g=V_\text{III}g_\text{III}=V_\text{V}g_\text{V}
 \]
 cannot have constant scalar curvature $1$ for any constants $a_1,\dots,a_4$ 
 and $\omega$.
\end{example}

%============================================================================%
\section{Application to dimension three}\label{sec:3D}

%\begin{table}
\setlength{\rotFPtop}{0pt plus 1fil}
\begin{sidewaystable}[ht]
	\centering
	\textbf{Non-degenerate second order superintegrable systems in 
	dimension~$n=3$, for Euclidean geometry $g=\sum_i dx_i^2$}\\[.5cm]
	\begin{tabular}{p{4.2cm}|p{7.1cm}|p{6.4cm}|p{2.8cm}}
		\toprule
		\textbf{Example}
		& \textbf{potential mod constant}
		& \textbf{$\mathbf B$ mod gauge terms}
		& $\mathbf{-\frac{5}{3}\bar{t}}$ \textbf{mod const.} \\
		\midrule
		\multicolumn{3}{l}{\emph{Regular systems} (linked with elliptic 
		separation coordinates)} \\
		\midrule
		``Generic system'' I [2111]/\newline Smorodinsky-Winternitz I
		& $\sum_{i=1}^n \left( \frac{a_i}{x_i^2}+\omega\,x_i^2 \right)$
		& $-\frac{3}{2}\,(x^2\ln(x) + y^2\ln(y) + z^2\ln(z))$
		& $\sum_k \ln(x_k)$ \\
		\midrule
		System II [221]
		& $\omega(x^2+y^2+z^2) + a_1\,\frac{x-iy}{(x+iy)^3}
		+ a_2\,\frac{1}{(x+iy)^2} + \frac{a_3}{z^2}$
		& $(x^2+y^2)(\frac12\,\ln(x^2 + y^2)- 
		i\,\arctan(\frac{x}{y}))$\newline
		$ + z^2\ln(z)$
		& $\ln(z) + \ln(x^2 + y^2)$\newline $-2i\arctan(\frac{x}{y})$ \\
		\midrule
		System III [23]
		& $\omega(x^2 + y^2 + z^2)$\newline
		$+ \frac{a_1}{(x+iy)^2}
		+ \frac{a_2z}{(x+iy)^3} + a_3\,\frac{x^2 + y^2 - 3z^2}{(x +iy)^4}$
		& $(x^2 +y^2 +z^2)(\frac12\,\ln(x^2 + y^2)-i\arctan(\frac{x}{y}))$
		& $\frac32\,\ln(x^2 + y^2)$\newline $- 3i\,\arctan(\frac{x}{y})$ \\
		System V [32]
		& $\omega(4x^2 + y^2 + z^2)$\newline
		$+ a_1x + \frac{a_2}{(y+iz)^2}
		+ a_3\,\frac{y-iz}{(y+iz)^3}$
		& $(y^2+z^2)(\frac12\,\ln(y^2 + z^2)-i\,\arctan(\tfrac{y}{z}))$
		& $\ln(y^2 + z^2)$\newline $- 6i\arctan(\frac{y}{z})$ \\
		\midrule
		System IV [311]/\newline Smorodinsky-Winternitz II
		& $\omega(4x^2 + y^2 + z^2) + a_1x + \frac{a_2}{y^2} + 
		\frac{a_3}{z^2}$
		& $y^2\ln(y) + z^2\ln(z)$
		& $\ln(y) + \ln(z)$ \\
		\midrule
		System VI [41]
		& $\omega(z^2 - 2(x-iy)^3 + 4x^2 + 4y^2)$\newline
		$+ a_1(2x + 2iy - 3(x-iy)^2) + a_2(x-iy) + \frac{a_3}{z^2}$
		& $z^2\ln(z) + \frac16\,(x-iy)^3$
		& $\ln(z)$ \\
		\midrule
		System VII [5]
		& $\omega\,(x+iy) + a_1\,(\frac34(x+iy)^2 + \frac{z}4)$\newline\quad
		$+ a_2\,((x+iy)^3 + \frac{1}{16}\,(x-iy) + \frac34\,(x+iy)z)$
		\newline\quad
		$+ \frac{a_3}{16}\,(5(x+iy)^4 + x^2 + y^2 + z^2 + 6(x+iy)^2z)$
		& $-\frac13\,((x+iy)^2+6z)\,(x+iy)^2$
		& $0$ \\
		\midrule
		\multicolumn{4}{l}{\emph{Exceptional systems} (linked with degenerate 
		separation coordinates)} \\
		\midrule
		Isotropic oscillator O
		& $\omega^2\,\sum_ix_i^2 +\sum_i \alpha_ix_i$
		& 0
		& 0 \\
		\midrule
		System OO
		& $\frac{\omega}{2}\,\left(x^2 + y^2 + \frac{z^2}{4}\right)
		+ a_1x + a_2y + \frac{a_3}{z^2}$
		& $z^2\ln(z)$
		& $\ln(z)$ \\
		\midrule
		System A
		& $\omega\,((x-iy)^3 + 6(x^2+y^2+z^2))$\newline
		$+ a_1\,((x-iy)^2 + 2(x+iy)) + a_2\,(x-iy) + a_3z$
		& $-\frac{1}{18}\,(x-iy)^3$
		& $0$ \\
		\bottomrule
	\end{tabular}
	\caption{The properly superintegrable systems in dimension 3 on flat 
	space. The Systems III and V are conformally equivalent under St\"ackel 
	transform, see Example~\ref{ex:III.V} for details.}
	\label{tab:3D}
\end{sidewaystable}
%\end{table}

We have already discussed a few examples in the previous section, along with their behavior under conformal transformations. In the present section, we apply our framework to the 3-dimensional case.
Non-degenerate second order conformally superintegrable systems in 
dimension~3 are classified in~\cite{Kalnins&Kress&Miller-IV,Capel_phdthesis}. 
Also, it is known that all these systems are abundant 
\cite{Kalnins&Kress&Miller-III}.
In \cite{Kalnins&Kress&Miller-IV} it has been established that any 
non-degenerate second order conformally superintegrable system is St\"ackel 
equivalent to a non-degenerate second order abundant and proper system on a 
constant curvature geometry.
%In view of Theorem~\ref{thm:B.transformation}
We shall therefore restrict to the study of abundant systems for constant 
curvature metrics.
All non-degenerate $3$-dimensional systems are equivalent to these.
Recall that in dimension~3, the non-linear 
condition~\eqref{eqn:Weyl.condition.abundant} is void. Consequently, no 
further restriction exists on the tensor $S_{ijk}$.
Hence any trace-free symmetric initial conditions $\Psi_{ijk}=S_{ijk}(x_0)$ 
in a point $x_0\in M$ can be integrated to a structure tensor of an abundant 
second order conformally superintegrable system, c.f.\ 
Corollary~\ref{cor:no.Weyl.3D}.  
Therefore the set of conformal equivalence classes of such systems is 
parametrised by the seven dimensional space of trace-free symmetric 3-tensors 
$\Psi_{ijk}$ or, equivalently, harmonic ternary cubics 
$\Psi(\mathbf p)=\Psi_{ijk}p^ip^jp^k$.
This parametrisation is equivariant with respect to the stabiliser subgroup 
of the point $x_0$ in the conformal group, which is isomorphic to 
$\mathrm{SO}(3)$.

We comment that this agrees with the
references~\cite{Kalnins&Kress&Miller&Post11,Capel&Kress,Capel_phdthesis}.  In
\cite{Kalnins&Kress&Miller&Post11} a $10$-parameter classification
space is mentioned, corresponding to a $10$-dimensional representation of
$\mathrm{SO}(3)$ in \cite{Capel_phdthesis,Capel&Kress}. This $10$-dimensional
representation decomposes into two irreducible components of dimension 7 and
3, corresponding to $S_{ijk}$ and $\bar t_{,i}$ in our framework.  Note that
the $3$-dimensional component is restricted in the references, which 
corresponds to imposing proper superintegrability here.
The 7-dimensional component is realised as the space of binary sextics and it 
is shown that no restrictions exist on this component.
The relation to our framework is given by a known correspondence between
harmonic ternary cubics and binary sextics as follows.

The adjoint action of $\mathrm{SL}(2,\mathbb C)$ on its Lie algebra
$\mathrm{sl}(2,\mathbb C)\cong\mathbb C^3$ preserves the Killing form.  This
defines a group morphism $\mathrm{SL}(2,\mathbb C)\to\mathrm{SO}(3,\mathbb C)$
with kernel $\{\pm1\}$ and hence an isomorphism $\mathrm{SL}(2,\mathbb
C)/\mathbb Z_2\to\mathrm{SO}(3,\mathbb C)$.  The standard action of
$\mathrm{SL}(2,\mathbb C)$ on $\mathbb C^2$ induces an $\mathrm{SL}(2,\mathbb
C)$-action on $S^2\mathbb C^2$ which descends to an $\mathrm{SO}(3,\mathbb
C)$-action, because the elements $\pm1$ act trivially.  The latter induces an
$\mathrm{SO}(3,\mathbb C)$-action on $S^3S^2\mathbb C^2$ which descends to
$S^6\mathbb C^2$ under total symmetrisation $S^3S^2\mathbb C^2\to S^6\mathbb
C^2$.  Together with the isomorphism $S^2\mathbb C^2\cong\mathbb C^3$, we
obtain a morphism $S^3\mathbb C^3\to S^6\mathbb C^2$ of $\mathrm{SO}(3,\mathbb
C)$-representations, giving an $\mathrm{SO}(3,\mathbb C)$-equivariant morphism
from the 10-dimensional space of ternary cubics to the 7-dimensional space of
binary sextics.  Its restriction to the 7-dimensional space of harmonic
ternary cubics is non-trivial and hence an isomorphism by Schur's lemma.
Explicitly, it is given by defining a sextic $s$ from the cubic $\Psi(\mathbf 
p)$ via
\begin{equation}\label{eqn:binary.sextic}
	s(z,w)=\Psi(z^2-w^2,2zw,z^2+w^2).
\end{equation}
Note that the stabiliser subgroup contains only rotations.
The action of translations is not linear and more involved.
% and given infinitesimally by setting $\bar t=0$ in 
%Equation~\eqref{eqn:Sijk.l}.

%----------------------------------------------------------------------------%
\subsection{Special case: Simultaneous standard scale and proper scale}
Let us confine to constant curvature geometries, since in dimension~3 any 
non-degenerate system is conformally equivalent to one on constant curvature 
\cite[Theorem 4]{Kalnins&Kress&Miller-IV}.
We begin with the very particular situation where the standard scale choice 
is 
a properly superintegrable system, i.e.\ we have both $\bar t=0$ and 
$\tau_{ij}=0$.
In this case, due to~\eqref{eqn:Ricci.condition.strange.eqn} 
and~\eqref{eqn:tij.bar}, the tracefree Ricci tensor satisfies
\begin{equation}\label{eqn:Ric.standard.proper.scale}
  \mathring R_{ij} = (S\indices{_i^{ab}}S_{jab})_\circ = 0\,,
\end{equation}
A comparison with \cite{Kalnins&Kress&Miller-III} shows that there are only 
three normal forms on $3$-dimensional Euclidean space that satisfy this 
criterion, c.f.\ also Table~\ref{tab:3D}.
These are the systems labeled VII, A and O in \cite{Kalnins&Kress&Miller-III}.
%
%In terms of~\eqref{eqn:binary.sextic} the 
%condition~\eqref{eqn:Ric.standard.proper.scale} yields an ideal 
%in the variables $a_i$.
%It has Hilbert dimension~$3$; its Hilbert series is 
%$s(t)=-(t-1)^{-3}(t+1)(t^2+3t+1)$.

%----------------------------------------------------------------------------%
\subsection{General picture}
Table \ref{tab:3D} lists the established normal forms for 3-dimensional non-degenerate systems on flat space, see \cite{Kalnins&Kress&Miller-IV,Capel_phdthesis}.
The functions $B$ and $\bar t$ are obtained as established in 
\cite{Kress&Schoebel&Vollmer}, and using %~\eqref{eqn:B.hat.to.B} from
Theorem~\ref{thm:B.transformation}, as well as Equation \eqref{eqn:trafo.t}, 
we may compute the corresponding functions for any 3-dimensional 
non-degenerate system conformally equivalent to one of the systems in 
Table~\ref{tab:3D}.

If in Table \ref{tab:3D}, we take the quotient under conformal equivalence 
for each example, then the systems III and V are identified and we obtain a 
list of nine abundant c-superintegrable systems. Due to 
\cite{Capel_phdthesis} these are all abundant c-superintegrable systems in 
dimension~3, up to one exception.
Indeed, from 
\cite{Kalnins&Kress&Miller-III,Kalnins&Kress&Miller-IV,Capel_phdthesis} it 
follows that there is one equivalence class of non-degenerate 3-dimensional 
superintegrable system that does not admit a representative \emph{properly} 
superintegrable system on flat 3-space.
This system is the generic system on the 3-sphere from 
Example~\ref{ex:generic.3.sphere}, see also Theorem~\ref{thm:generic.system}.
Its conformally equivariant structure tensor $S_{ijk}$ is generated by the 
structure function (up to gauge freedom)
\begin{equation*}
	B = -\frac32\,\sum_k s_k^2\,\ln(s_k)\,.
\end{equation*}

%However, this system does admit a 
%representative on the 3-sphere. It is given by the following normal form.
%
%\begin{example}
% According to~\cite{Kalnins&Kress&Miller-IV,Capel_phdthesis,Capel&Kress} a
% 3-dimensional non-degenerate second order conformally superintegrable 
%system 
% that is not conformally equivalent to a properly superintegrable system on 
% flat space is conformally equivalent to the system
% \begin{equation*}
% 	g = d\phi^2 + \sin^2(\phi)\,(d\psi^2 + \sin^2(\psi)\,d\eta^2)\,,\qquad
% 	V = \sum_{k=1}^4\left(\frac{a_k}{s_k^2}\right)+c\,,
% \end{equation*}
% where
% \[
%   s_1 = \sin(\phi)\sin(\psi)\sin(\eta)\,,\quad
%   s_2 = \sin(\phi)\sin(\psi)\cos(\eta)\,,\quad
%   s_3 = \sin(\phi)\cos(\psi)\,,\quad
%   s_4 = \cos(\phi)\,.   
% \]
% are cartesian coordinates on the ambient space of the sphere 
% $\mathbb{S}^3\subset\R^4$.
% This is the generic system already discussed in 
% Examples~\ref{ex:generic.3.sphere} and~\ref{ex:generic.n.sphere}.
% We find that, up to a multiplicative constant,
% \[
%   \cs^{n+2} = e^{-\frac53\,\bar t} = \prod_{k=1}^4 s_k = 
%   \sin^3(\phi)\cos(\phi)\sin^2(\psi)\cos(\psi)\sin(\eta)\cos(\eta)\,,
% \]
% and then, up to an additive constant,
% \begin{equation*}
%   \bar t = -\frac35\,\sum_k \ln(s_k)\,.
% \end{equation*}
% Moreover, up to gauge terms, we arrive at
% \begin{equation*}
%   B = -\frac32\,\sum_k s_k^2\,\ln(s_k)\,.
% \end{equation*}
%\end{example}

%============================================================================%
%\clearpage
\bibliographystyle{amsalpha}
\bibliography{conformal_superintegrability}
%\printbibliography
\medskip

\end{document}